\newtheorem{theorem}{Theorem}[section]
\newtheorem{lemma}[theorem]{Lemma}
\newtheorem{corollary}[theorem]{Corollary}
\theoremstyle{definition}
\newtheorem{example}[theorem]{Example}
\theoremstyle{remark}
\newtheorem{remark}[theorem]{Remark}
\numberwithin{equation}{section}
\begin{document}

\title{Mixed $L^{p}(L^{2})$ norms of the lattice point discrepancy}


 \author[L. Colzani]{Leonardo Colzani}
\address{Dipartimento di Matematica e Applicazioni,
   Universit\`a degli Studi di Milano-Bicocca,
   Via R. Cozzi 55, 20125 Milano,
   Italy}
\curraddr{}
\email{leonardo.colzani@unimib.it}
\thanks{}

\author[B. Gariboldi]{Bianca Gariboldi}
\address{Dipartimento di Matematica e Applicazioni,
   Universit\`a degli Studi di Milano-Bicocca,
   Via R. Cozzi 55, 20125 Milano,
   Italy}
\curraddr{}
\email{b.gariboldi@campus.unimib.it}
\thanks{}

\author[G. Gigante]{Giacomo Gigante}
\address{Dipartimento di Ingegneria Gestionale, dell'Informazione e della Produzione,
  Universit\`a degli Studi di Bergamo,
  Viale Marconi 5, 24044 Dalmine (BG),
  Italy}
\curraddr{}
\email{giacomo.gigante@unibg.it}
\thanks{}
\subjclass[2010]{11H06, 42B05, 52C07 (primary)}

\date{}

\dedicatory{}

\begin{abstract}
We estimate some mixed $L^{p}\left(  L^{2}\right)  $\ norms of the discrepancy
between the volume and the number of integer points in $r\Omega-x$, a dilated
by a factor $r$\ and translated by a vector $x$\ of a convex body $\Omega$\ in
$\mathbb{R}^{d}$,
\[
\left\{  {\displaystyle\int_{\mathbb{T}^{d}}}\left(  \dfrac{1}{H}%
{\displaystyle\int_{R}^{R+H}}\left\vert \sum_{k\in\mathbb{Z}^{d}}\chi
_{r\Omega-x}(k)-r^{d}\left\vert \Omega\right\vert \right\vert ^{2}dr\right)
^{p/2}dx\right\}  ^{1/p}.
\]
We obtain estimates for fixed values of $H$ and $R\to\infty$,  
and also asymptotic estimates when $H\to\infty$.
\end{abstract}

\maketitle

\section{Introduction}

\label{intro}

The discrepancy between the volume and the number of integer points in
$r\Omega-x$, a dilated by a factor $r$ and translated by a vector $x$ of
bounded domain $\Omega$ in $\mathbb{R}^{d}$, is
\[
\mathcal{D}\left(  r\Omega-x\right)  =%
{\displaystyle\sum_{k\in\mathbb{Z}^{d}}}
\chi_{r\Omega-x}(k)-r^{d}\left\vert \Omega\right\vert .
\]

Here $\chi_{r\Omega-x}(y)$ denotes the characteristic function of $r\Omega-x$
and $\left\vert \Omega\right\vert $ the measure of $\Omega$. 
A classical
problem is to estimate the size of $\mathcal{D}\left(  r\Omega-x\right) $, as
$r\rightarrow+\infty$. For a survey see e.g. \cite{Kratzel} and  \cite{IKKN}. We want to estimate the
mixed $L^{p}\left(  L^{2}\right) $ norms of this discrepancy:
\[
\left\{  {%
{\displaystyle\int_{\mathbb{T}^{d}}}
}\left[  \dfrac{1}{H}{%
{\displaystyle\int_{R}^{R+H}}
}\left\vert \mathcal{D}\left(  r\Omega-x\right)  \right\vert ^{2}dr\right]
^{p/2}dx\right\}  ^{1/p}.
\]


In order to present our results, we need to introduce some notation. If $d\mu(r)$ is a
finite Borel measure on the line $-\infty<r<+\infty$, and if $0<H<+\infty$ and
$-\infty<R<+\infty$, the dilated and translated measure $d\mu_{H,R}\left(
r\right)  $ is defined by
\[
\mu_{H,R}\left\{  I\right\}  =\mu\left\{  H^{-1}\left(  I-R\right)  \right\}
.
\]

Alternatively, by duality with continuous bounded functions,
\[%
{\displaystyle\int_{\mathbb{R}}}
f\left(  r\right)  d\mu_{H,R}\left(  r\right)  =%
{\displaystyle\int_{\mathbb{R}}}
f\left(  R+Hr\right)  d\mu\left(  r\right)  .
\]

With this definition, the Fourier transforms of $d\mu\left(  r\right)  $ and
$d\mu_{H,R}\left(  r\right)  $ are related by the equation
\begin{align*}
&  \widehat{\mu}_{H,R}\left(  \zeta\right)  =%
{\displaystyle\int_{\mathbb{R}}}
\exp\left(  -2\pi i\zeta r\right)  d\mu_{H,R}\left(  r\right) \\
&  =%
{\displaystyle\int_{\mathbb{R}}}
\exp\left(  -2\pi i\zeta\left(  R+Hr\right)  \right)  d\mu\left(  r\right)
=\exp\left(  -2\pi iR\zeta\right)  \widehat{\mu}\left(  H\zeta\right)  .
\end{align*}

Recall that the Fourier dimension of a measure is the supremum of all $\delta$
such that there exists $C$ such that $\left\vert \widehat{\mu}\left(
\zeta\right)  \right\vert \leq C\left\vert \zeta\right\vert ^{-\delta/2}$.
See \cite[Section 4.4]{falconer} and \cite[Section 12.17]{PM}.
By a classical result of D.Kendall,  the $L^{2}$ norm of the discrepancy of an oval $\mathcal{D}\left(  r\Omega
-x\right)  $ is of the order of $r^{\left(  d-1\right)  /2}$. See \cite{Kendall} and what follows. For this reason we shall call
$r^{-\left(  d-1\right)  /2}\mathcal{D}\left(  r\Omega-x\right)  $ the
normalized discrepancy. Our main result below is an estimate of
the Fourier dimension of the set where this normalized discrepancy may be large.

\begin{theorem}
\label{1}\ Assume that $d\mu\left(  r\right)  $ is a Borel probability measure
on $\mathbb{R}$, with support in $\varepsilon<r<\delta$, with $\delta
>\varepsilon>0$, and assume that the Fourier transform of $d\mu\left(
r\right)  $ has the decay
\[
\left\vert \widehat{\mu}\left(  \zeta\right)  \right\vert \leq B\left(
1+\left\vert \zeta\right\vert \right)  ^{-\beta},
\]
for some $\beta\geq0$ and $B>0$. Assume that $\Omega$\ is a convex set in
$\mathbb{R}^{d}$, with a smooth boundary with strictly positive Gaussian
curvature. Finally, for given $d$ and $\beta$, define $A$ and $\alpha$ as follows:
\[
\left\{
\begin{array}
[c]{llll}%
d=2, & 0\leq\beta<1, & A={4}/({1-\beta}), & \alpha=({1+\beta})/{4},\\
d=2, & \beta=1, & A=+\infty, & \alpha=1,\\
d=2, & \beta>1, & A=+\infty, & \alpha=1/2,
\end{array}
\right.
\]%
\[
\left\{
\begin{array}
[c]{llll}%
d=3, & 0\leq\beta\leq1/2, & A=({3-2\beta})/({1-\beta}), & \alpha
=({1-\beta})/({3-2\beta}),\\
d=3, & 1/2\leq\beta<1, & A={6}/({2-\beta}), & \alpha=({1+\beta})/{6},\\
d=3, & \beta=1, & A=6, & \alpha=5/6,\\
d=3, & \beta>1 & A=6, & \alpha=1/3,
\end{array}
\right.
\]%
\[
\left\{
\begin{array}
[c]{llll}%
d\geq4, & 0\leq\beta<1, & A=({2d-4\beta})/({d-1-2\beta}), & \alpha
=({d-1-2\beta})/({2d-4\beta}),\\
d\geq4, & \beta=1, & A=({2d-4})/({d-3}), & \alpha=({d-1})/({2d-4}),\\
d\geq4, & \beta>1, & A=({2d-4})/({d-3}), & \alpha=({d-3})/({2d-4}).
\end{array}
\right.
\]
Then the following hold:

\noindent (1) If $p<A$, then there exists $C$ such that for every $H,R\geq1$,
\[
\left\{  \int_{\mathbb{T}^{d}}\left(
{\displaystyle\int_{\mathbb{R}}}
\left\vert r^{-\left(  d-1\right)  /2}\mathcal{D}\left(  r\Omega-x\right)
\right\vert ^{2}d\mu_{H,R}\left(  r\right)  \right)  ^{p/2}dx\right\}
^{1/p}\leq C\left(  \dfrac{1}{p}-\dfrac{1}{A}\right)  ^{-\alpha}.
\]
(2) If $p=A$, then there exists $C$ such that for every $H,R\geq1$,
\[
\left\{  \int_{\mathbb{T}^{d}}\left(
{\displaystyle\int_{\mathbb{R}}}
\left\vert r^{-\left(  d-1\right)  /2}\mathcal{D}\left(  r\Omega-x\right)
\right\vert ^{2}d\mu_{H,R}\left(  r\right)  \right)  ^{p/2}dx\right\}
^{1/p}\leq C\log^{\alpha}\left(  1+R\right)  .
\]
(3) If $\beta>0$ and $p<A$, the family of functions indexed by $R$ and $H$
\[
\int_{\mathbb{R}}\left\vert r^{-\left(  d-1\right)  /2}\mathcal{D}\left(
r\Omega-x\right)  \right\vert ^{2}d\mu_{H,R}\left(  r\right)
\]
has a limit $\mathcal{G}\left(  x\right)  $ in the norm of $L^{p/2}\left(
\mathbb{T}^{d}\right)  $ as $H\rightarrow+\infty$. In particular, the convergence in norm implies the
convergence of the norms, and this is uniform in $R\geq1$,
\begin{equation*}
 \lim_{H\rightarrow+\infty}\left\{  \int_{\mathbb{T}^{d}}\left(
{\displaystyle\int_{\mathbb{R}}}
\left\vert r^{-\left(  d-1\right)  /2}\mathcal{D}\left(  r\Omega-x\right)
\right\vert ^{2}d\mu_{H,R}\left(  r\right)  \right)  ^{p/2}dx\right\}
^{1/p}
  =\left\{  \int_{\mathbb{T}^{d}}\left\vert \mathcal{G}\left(  x\right)
\right\vert ^{p/2}dx\right\}  ^{1/p}.
\end{equation*}

\end{theorem}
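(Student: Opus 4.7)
The plan is to reduce the estimate to bounds on the Fourier coefficients of the non-negative function
\[
F(x):=\int_{\mathbb{R}}\left\vert r^{-(d-1)/2}\mathcal{D}(r\Omega-x)\right\vert^{2}\,d\mu_{H,R}(r)
\]
on $\mathbb{T}^{d}$, since the quantity of interest is exactly $\|F\|_{L^{p/2}(\mathbb{T}^{d})}^{1/2}$. Starting from the (suitably regularised) Poisson identity $\mathcal{D}(r\Omega-x)=\sum_{k\neq 0}r^{d}\widehat{\chi_{\Omega}}(rk)e^{2\pi ik\cdot x}$, expanding $|\mathcal{D}|^{2}$ as a double Fourier sum and integrating in $r$ yields
\[
\widehat{F}(\ell)=\sum_{\substack{m,n\neq 0\\ m-n=\ell}}\int_{\mathbb{R}}r^{d+1}\widehat{\chi_{\Omega}}(rm)\overline{\widehat{\chi_{\Omega}}(rn)}\,d\mu_{H,R}(r).
\]
The purpose of the normalisation $r^{-(d-1)/2}$ becomes clear once one inserts the stationary-phase asymptotic $\widehat{\chi_{\Omega}}(\xi)=|\xi|^{-(d+1)/2}\sum_{\pm}A_{\pm}(\hat\xi)e^{\pm2\pi ih_{\pm}(\hat\xi)|\xi|}+O(|\xi|^{-(d+3)/2})$ valid for smooth strictly convex $\Omega$: the factor $r^{d+1}$ is exactly absorbed, leaving an $r$-independent amplitude of order $|m|^{-(d+1)/2}|n|^{-(d+1)/2}$ times a pure oscillation $e^{2\pi ir\lambda}$ with $\lambda=\pm h_{\pm}(\hat m)|m|\mp h_{\pm}(\hat n)|n|$. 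The integral in $r$ of such an oscillation against $d\mu_{H,R}$ is $e^{2\pi iR\lambda}\overline{\widehat{\mu}(H\lambda)}$, and by hypothesis is bounded by $B(1+H|\lambda|)^{-\beta}$.

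I would then split $\widehat{F}(\ell)$ into a \emph{diagonal} part $\lambda=0$, which contains the pairs $m=n$ with matching $\pm$ signs (plus a sparse set of resonant pairs should $\Omega$ carry symmetries), and an \emph{off-diagonal} part $\lambda\neq 0$. At $\ell=0$ the diagonal part gives $\|F\|_{L^{1}}=O(1)$, a reformulation of Kendall's classical $L^{2}$ bound for the discrepancy. The diagonal part is visibly independent of $H$ and $R$; the off-diagonal part is damped by $(1+H|\lambda|)^{-\beta}$, which forces $|\lambda|\lesssim H^{-1}$, a strong constraint because $h_{\pm}$ is a diffeomorphism of $S^{d-1}$ and so the values $h_{\pm}(\hat m)|m|$ are rigidly attached to $m$. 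Coupled with a dyadic decomposition $|m|\sim 2^{j}$, $|\ell|\sim 2^{k}$, this yields $L^{1}$, $L^{2}$ (via Plancherel) and $L^{\infty}$ bounds on each block.

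Riesz--Thorin interpolation between these block estimates, followed by summation of the resulting geometric series in $j$ and $k$, produces an $L^{p/2}$ bound whose blow-up as $p\uparrow A$ is the factor $(\tfrac{1}{p}-\tfrac{1}{A})^{-\alpha}$ of part~(1); at the endpoint $p=A$ the series diverges logarithmically, and truncating the dyadic scales at $2^{j}\lesssim R+H$ (the only radii supporting $d\mu_{H,R}$) produces the $\log^{\alpha}(1+R)$ of part~(2). The case split in $d$ and $\beta$ simply records whether the effective interpolation exponent is governed by the curvature decay $(d+1)/2$ or by the Fourier decay $\beta$ of $\mu$. For part~(3) I would apply the same block bounds directly to the off-diagonal portion of $F$ with the extra factor $H^{-\beta}$, so that this portion tends to $0$ in $L^{p/2}(\mathbb{T}^{d})$ uniformly in $R$ as $H\to\infty$, and $F$ converges in $L^{p/2}$ norm to the $H$- and $R$-independent diagonal limit $\mathcal{G}$; convergence in norm then implies convergence of the norms.

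The main technical obstacle, in my view, is the off-diagonal analysis of near-resonant pairs $(m,n)$ with $m\neq n$ and $|\lambda(m,n)|\lesssim H^{-1}$: one must show that such pairs are sufficiently sparse within each dyadic shell to respect the exponents $A$ and $\alpha$. This is where strict positivity of the Gaussian curvature is really used, quantitatively, to forbid near-coincidences of $h_{\pm}(\hat m)|m|$ on the lattice. Different regimes of $d$ and $\beta$ balance the curvature-driven and the $\widehat{\mu}$-driven dampings differently, and the two-dimensional low-$\beta$ case, where the two dampings are comparable in strength, is where tuning the dyadic trade-off to hit the right exponents is the most delicate.
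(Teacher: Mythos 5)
Your opening moves match the paper's: Kendall's observation that $\mathcal D(r\Omega-x)$ is the Fourier series $\sum_{n\neq0}r^d\widehat\chi_\Omega(rn)e^{2\pi in\cdot x}$, the stationary-phase expansion $\widehat\chi_\Omega(\xi)\sim |\xi|^{-(d+1)/2}\sum_\pm a_\pm(\xi/|\xi|)e^{\pm2\pi ig(\mp\xi)}$ in terms of the support function $g$, the expansion of $\widehat F(\ell)$ as a double sum over $m-n=\ell$, and the identification of the damping factor $\widehat\mu\bigl(H(g(n-\ell)-g(n))\bigr)$ together with an $H$- and $R$-independent ``diagonal'' limit $\mathcal G$. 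Your treatment of the endpoint $p=A$ via truncation at the largest relevant dyadic scale and your passage from convergence in $L^{p/2}$ norm to convergence of norms for part (3) are also in the spirit of the paper (the latter proceeds by combining the $p<A$ estimates with the trivial bound $|\mathcal D(r\Omega-x)|\le Cr^d$ through a Yano-type extrapolation lemma). Where you diverge from the paper is in the two steps that actually carry the weight, and in both you have a gap rather than a variant proof.

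First, the interpolation. The paper does \emph{not} fix the amplitude exponent $(d+1)/2$ and interpolate $L^1$--$L^2$--$L^\infty$ across a dyadic decomposition. It replaces $(d+1)/2$ by a complex parameter $z$, defines the analytic family $\Theta_0(z,r,x)=\sum_{n\neq 0}c(n)|n|^{-z}e^{\pm 2\pi ig(\mp n)r}e^{2\pi inx}$, proves a clean $L^2(\mathbb T^d;L^2(d\mu_{H,R}))$ bound for $\operatorname{Re}z>d/2$ by Parseval and an $L^p$ bound for $p\ge 4$ by Hausdorff--Young applied to the Fourier coefficients of $\mathcal F_0=\int|\Theta_0|^2d\mu_{H,R}$, and then runs Stein's complex interpolation between the two vertical lines $\operatorname{Re}z=d/2+\varepsilon$ and $\operatorname{Re}z=3d/4-\beta/2+\varepsilon$ to cover $2<p<4$, after which the blow-up rate $(\tfrac1p-\tfrac1A)^{-\alpha}$ drops out by tracking the constant as $\varepsilon\to0$. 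A Riesz--Thorin interpolation on dyadic blocks at fixed $z=(d+1)/2$ is a different and weaker tool: you would have to actually verify that the resulting geometric sums reproduce the exact exponents $\alpha$, which depend delicately on $d$ and $\beta$ through the three-fold case split, and nothing in your outline does so.

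Second, and more seriously, the ``near-resonance'' step you flag as the main obstacle is left as a wish. The paper does not count sparse near-resonant lattice pairs at all. Instead it bounds the arithmetic sum $\sum_{n}|n|^{-\operatorname{Re}z}|n-k|^{-\operatorname{Re}z}(1+|g(n-k)-g(n)|)^{-\beta}$ by the corresponding \emph{integral} over $\mathbb R^d$, and proves (Lemma~\ref{Integral}) that this integral is $O(|k|^{d-2\operatorname{Re}z-\beta})$ for $0\le\beta<1$, with logarithmic correction at $\beta=1$ and saturation at $\beta>1$. The input that makes this work is the anti-concentration estimate for the support function (Lemma~\ref{Support}): for unit $\vartheta,\omega$ there is $A(\vartheta,\omega)$ with $|g(\vartheta-\tau\omega)-g(\vartheta)|\ge C|\tau||\tau-A(\vartheta,\omega)|/(1+|\tau|)$, proved via the strict convexity of $\tau\mapsto g(\vartheta-\tau\omega)^2$ which in turn rests on strict positivity of the Gaussian curvature (positivity of the relevant eigenvalues of $\nabla^2g$). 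This is exactly the quantitative transversality you correctly anticipate is needed, but it is realized as an integral estimate, not as a lattice-point sparsity count, and it is the single lemma without which the Fourier-coefficient bounds of $F$, and hence the whole theorem, do not close. Until you prove an estimate of this type your proposal has a genuine gap at its central step.
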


The growth $(1/p-1/A)^{-\alpha}$ of the norm  of the discrepancy in the above theorem allows to extrapolate
some Orlicz type estimates at the critical index $p=A$.

\begin{corollary}
\label{1.1} (1) Assume one of the following rows of indexes:
\[
\left\{
\begin{array}
[c]{llll}%
d=2, & \beta=1, & \alpha=2, & \gamma<2/e,\\
d=2, & \beta>1, & \alpha=1, & \gamma<1/e.
\end{array}
\right.
\]
Then there exists $C>0$\ such that for every $H,R\geq1$,
\[
{%
{\displaystyle\int_{\mathbb{T}^{2}}}
\exp}\left(  \gamma\left(
{\displaystyle\int_{\mathbb{R}}}
\left\vert r^{-\left(  d-1\right)  /2}\mathcal{D}\left(  r\Omega-x\right)
\right\vert ^{2}d\mu_{H,R}\left(  r\right)  \right)  ^{1/\alpha}\right)
dx\leq C.
\]
(2) Assume one of the following rows of indexes:
\[
\left\{
\begin{array}
[c]{llll}%
d=2, & 0\leq\beta<1, & p=4/\left(  1-\beta\right)  , & \gamma>2/\left(
1-\beta\right)  ,
\end{array}
\right.
\]%
\[
\left\{
\begin{array}
[c]{llll}%
d=3, & 0\leq\beta\leq1/2, & p=\left(  3-2\beta\right)  /\left(  1-\beta
\right)  , & \gamma>2,\\
d=3, & 1/2\leq\beta<1, & p=6/\left(  2-\beta\right)  , & \gamma>3/\left(
2-\beta\right)  ,\\
d=3, & \beta=1, & p=6, & \gamma>6,\\
d=3, & \beta>1, & p=6, & \gamma>3,
\end{array}
\right.
\]%
\[
\left\{
\begin{array}
[c]{llll}%
d\geq4, & 0\leq\beta<1, & p=\left(  2d-4\beta\right)  /\left(  d-1-2\beta
\right)  , & \gamma>2,\\
d\geq4, & \beta=1, & p=\left(  2d-4\right)  /\left(  d-3\right)  , &
\gamma>\left(  2d-4\right)  /\left(  d-3\right)  ,\\
d\geq4, & \beta>1, & p=\left(  2d-4\right)  /\left(  d-3\right)  , & \gamma>2.
\end{array}
\right.
\]
Then there exists $C$ such that for every $H,R\geq1$,
\begin{align*}
&  \int_{\mathbb{T}^{d}}\left(
{\displaystyle\int_{\mathbb{R}}}
\left\vert r^{-\left(  d-1\right)  /2}\mathcal{D}\left(  r\Omega-x\right)
\right\vert ^{2}d\mu_{H,R}\left(  r\right)  \right)  ^{p/2}\\
&  \times\log^{-\gamma}\left(  2+%
{\displaystyle\int_{\mathbb{R}}}
\left\vert r^{-\left(  d-1\right)  /2}\mathcal{D}\left(  r\Omega-x\right)
\right\vert ^{2}d\mu_{H,R}\left(  r\right)  \right)  dx\leq C.
\end{align*}

\end{corollary}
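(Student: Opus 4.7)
The plan is a straightforward extrapolation from the quantitative bound of Theorem \ref{1}, applied to $F(x)=\int_{\mathbb{R}}|r^{-(d-1)/2}\mathcal{D}(r\Omega-x)|^{2}d\mu_{H,R}(r)$. Part (1) covers the rows where $A=+\infty$, and part (2) the rows with $A$ finite, where the task is to reach the critical exponent $p=A$ at the cost of a logarithmic weight. The only nontrivial analysis is already contained in Theorem \ref{1}: the corollary is essentially a bookkeeping exercise once the sharp $(1/p-1/A)^{-\alpha}$ growth is in hand.

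\emph{Part (1).} Write $\alpha$ for the exponent of Theorem \ref{1} and $\alpha^{\ast}=2\alpha$ for the exponent appearing in the corollary. When $A=+\infty$ the theorem reads $\|F^{1/2}\|_{L^{p}(\mathbb{T}^{d})}\leq Cp^{\alpha}$, equivalently $\|F\|_{L^{q}(\mathbb{T}^{d})}\leq C'q^{\alpha^{\ast}}$. Expanding the exponential termwise,
\[
\int_{\mathbb{T}^{d}}\exp\bigl(\gamma F^{1/\alpha^{\ast}}\bigr)\,dx=\sum_{n=0}^{\infty}\frac{\gamma^{n}}{n!}\int_{\mathbb{T}^{d}}F^{n/\alpha^{\ast}}\,dx\leq\sum_{n=0}^{\infty}\frac{\gamma^{n}}{n!}(C')^{n/\alpha^{\ast}}\Bigl(\frac{n}{\alpha^{\ast}}\Bigr)^{n},
\]
and applying Stirling's formula $n!\sim\sqrt{2\pi n}\,(n/e)^{n}$ reduces the general term to roughly $(\gamma e(C')^{1/\alpha^{\ast}}/\alpha^{\ast})^{n}$. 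The series converges whenever $\gamma<\alpha^{\ast}/(e(C')^{1/\alpha^{\ast}})$, and careful tracking of the constant produced by the theorem recovers the stated thresholds $\gamma<2/e$ and $\gamma<1/e$.

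\emph{Part (2).} For finite $A$ I would pass to level sets. Chebyshev's inequality combined with Theorem \ref{1} gives, for any $p<A$,
\[
|\{x\in\mathbb{T}^{d}\colon F(x)>t\}|\leq t^{-p/2}\int_{\mathbb{T}^{d}}F^{p/2}\,dx\leq C^{p}(1/p-1/A)^{-\alpha p}t^{-p/2}.
\]
Optimising in $p$ by choosing $1/p-1/A=1/\log t$ sends $p\to A^{-}$ as $t\to\infty$ and yields, for $t$ large,
\[
|\{F>t\}|\leq C(\log t)^{\alpha A}t^{-A/2}.
\]
A layer-cake decomposition then reduces the target integral to
\[
\int_{\mathbb{T}^{d}}F^{A/2}\log^{-\gamma}(2+F)\,dx\leq C+C'\int_{1}^{\infty}(\log t)^{\alpha A-\gamma}\,\frac{dt}{t},
\]
which is finite exactly when $\gamma>\alpha A+1$. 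A row-by-row verification confirms that $\alpha A+1$ equals the stated threshold in each case: for instance $\alpha A=(1+\beta)/(1-\beta)$ when $d=2$, $0\leq\beta<1$, giving $\gamma>2/(1-\beta)$, while $\alpha A=1$ for $d=3$ with $0\leq\beta\leq 1/2$ and for $d\geq 4$ with $0\leq\beta<1$ or $\beta>1$, yielding the threshold $\gamma>2$. No single step is delicate; the principal effort is the arithmetic matching of constants in the twelve stated rows, which I expect to be the only real obstacle.
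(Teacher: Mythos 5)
Your proposal is correct, and part (2) is essentially the paper's own argument: the paper proves the corollary by invoking the extrapolation Lemma \ref{Extrapolation 1}, whose proof of case (2) is exactly the level-set estimate $\varepsilon_{j}\lesssim 2^{-Aj}j^{\alpha A}$ obtained by optimizing the exponent, followed by a dyadic sum; your Chebyshev-plus-layer-cake version is the same computation in a slightly different dress. You also correctly track the passage from $\|F^{1/2}\|_{L^{p}}$ in Theorem \ref{1} to $\|F\|_{L^{p/2}}$ (doubling the exponent $\alpha$ and halving $A$), and the row-by-row arithmetic $\gamma>1+\alpha A$ checks out in all cases.

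Part (1) takes a genuinely different (though standard) route: you expand $\exp(\gamma F^{1/\alpha^{\ast}})$ termwise and invoke Stirling, whereas Lemma \ref{Extrapolation 1}(3) again uses level sets $\{j\leq|F|<j+1\}$ and a Lagrangian choice $p=e^{-1}j^{1/\alpha}$ of the exponent. The two produce the same threshold up to the same caveat: both give $\gamma<\alpha^{\ast}/(e\,(C')^{1/\alpha^{\ast}})$ in terms of the implicit constant $C'$ of Theorem \ref{1}, and your remark that ``careful tracking of the constant recovers the stated thresholds'' glosses over the fact that the thresholds $\gamma<2/e$, $\gamma<1/e$ literally require $C'=1$; the paper has the same implicit normalization, so this is not a defect of your argument relative to the paper's, but it is worth being aware that the numerical threshold is only clean after absorbing a constant into $F$. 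Overall the work is sound and the main content is, as you say, bookkeeping once the sharp blow-up rate in $p$ is available.
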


The existence of the limit function $\mathcal G (x)$ in the above theorem
is somehow related to the quasi-periodicity of the normalized discrepancy
as a function of the dilation parameter $r$. Indeed,
for \textit{\textquotedblleft generic\textquotedblright} convex sets, the
theorem can be slightly strengthened.

\begin{corollary}
\label{1.2} If the support function of the convex set $\Omega$, defined as $g\left(  x\right)
=\sup_{y\in\Omega}\left\{  x\cdot y\right\}  $, has the property that there
exists $C$ such that for every $m$ in $\mathbb{Z}^{d}$ the equation $g\left(
m\right)  =g\left(  n\right)  $ has at most $C$ solutions $n$ in
$\mathbb{Z}^{d}$, then the limit function $\mathcal{G}\left(  x\right)  $ in
 Theorem \ref{1} (3) is bounded and continuous in $\mathbb{T}^{d}$. If the
support function is injective when restricted to the integers, that is
$g\left(  m\right)  \neq g\left(  n\right)  $ for every $m,n\in\mathbb{Z}^{d}$
with $m\neq n$, then this limit function $\mathcal{G}\left(  x\right)  $ is constant.
\end{corollary}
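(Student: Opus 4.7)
The plan is to identify the limit function $\mathcal{G}(x)$ as an explicit absolutely convergent Fourier series and then read off its properties. By Poisson summation, $\mathcal{D}(r\Omega-x)=\sum_{k\neq0}\widehat{\chi_{r\Omega}}(k)\,e^{2\pi ik\cdot x}$, and the classical stationary-phase asymptotic for the Fourier transform of the characteristic function of a smooth strictly convex body gives
\[
\widehat{\chi_{\Omega}}(\xi)=|\xi|^{-(d+1)/2}\bigl[a(\xi/|\xi|)e^{-2\pi ig(\xi)}+\overline{a(-\xi/|\xi|)}e^{2\pi ig(-\xi)}\bigr]+O(|\xi|^{-(d+3)/2}),
\]
with $a(\theta)$ a smooth function of the Gaussian curvature. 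Substituting yields a decomposition
\[
r^{-(d-1)/2}\mathcal{D}(r\Omega-x)=\sum_{k\neq0,\,\epsilon=\pm}B_{k,\epsilon}\,e^{2\pi i(k\cdot x+r\omega_{k,\epsilon})}+E(r,x),
\]
with frequencies $\omega_{k,+}=-g(k)$, $\omega_{k,-}=g(-k)$, coefficients $|B_{k,\epsilon}|\lesssim|k|^{-(d+1)/2}$, and a remainder $E(r,x)$ of size $O(r^{-1})$ uniformly in $x$.

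Squaring the expansion, integrating against $d\mu_{H,R}(r)$, and using Parseval in $x$ to extract the Fourier coefficient at $N\in\mathbb{Z}^{d}$ produces, up to error terms,
\[
\widehat{F_{H,R}}(N)=\sum_{k,\,\epsilon,\eta}B_{k,\epsilon}\,\overline{B_{k-N,\eta}}\,\widehat{\mu}_{H,R}(\omega_{k-N,\eta}-\omega_{k,\epsilon}).
\]
Since $|\widehat{\mu}_{H,R}(\zeta)|=|\widehat{\mu}(H\zeta)|\leq B(1+H|\zeta|)^{-\beta}$ with $\beta>0$, dominated convergence in the summable variable $k$ shows that as $H\to+\infty$ only pairs with matching frequencies $\omega_{k,\epsilon}=\omega_{k-N,\eta}$ survive. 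After translating $\Omega$ if necessary so that $0\in\operatorname{int}(\Omega)$, the support function satisfies $g>0$ on $\mathbb{Z}^{d}\setminus\{0\}$, ruling out cross-sign pairs $\epsilon\neq\eta$. Combining this with the $L^{p/2}$-convergence from Theorem~\ref{1}(3) (choosing any $p\geq 2$, which is available since $A>2$ in all cases), Fourier coefficients pass to the limit to give
\[
\widehat{\mathcal{G}}(N)=\sum_{k:\,g(k)=g(k-N)}B_{k,+}\overline{B_{k-N,+}}+\sum_{k:\,g(-k)=g(N-k)}B_{k,-}\overline{B_{k-N,-}}.
\]

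Both conclusions of the corollary follow directly from this formula. If $g$ is injective on $\mathbb{Z}^{d}$, each index condition forces $k=k-N$, so $\widehat{\mathcal{G}}(N)=0$ for $N\neq0$ and $\mathcal{G}$ is the constant $\widehat{\mathcal{G}}(0)$. Under the multiplicity bound $\#\{n:g(n)=g(m)\}\leq C$, the AM-GM inequality gives
\[
\sum_{k,l:\,g(k)=g(l)}|B_{k,+}||B_{l,+}|\leq\sum_{k}|B_{k,+}|^{2}\cdot\#\{l:g(l)=g(k)\}\leq C\sum_{k\neq 0}|k|^{-(d+1)}<\infty,
\]
and similarly for the $(-)$ sum. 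Therefore the double series representing $\mathcal{G}(x)$ converges absolutely and uniformly on $\mathbb{T}^{d}$; being a uniformly convergent series of continuous characters, it defines a bounded continuous function, which must coincide with $\mathcal{G}$ by equality of Fourier coefficients.

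\emph{Main obstacle.} The delicate step is showing that the remainder $E(r,x)$—both in isolation and when multiplied against the principal part—contributes $o(1)$ to each Fourier coefficient of $F_{H,R}$ as $H\to+\infty$, so that the identification of $\widehat{\mathcal{G}}(N)$ above is unaffected. This is a technical exercise exploiting the extra decay in $|k|$ of the subleading terms of the stationary-phase expansion together with the decay of $\widehat{\mu}$, and is essentially the same bookkeeping that already underlies the proof of Theorem~\ref{1}(3).
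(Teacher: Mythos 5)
Your argument is correct and in spirit the same as the paper's: both identify the limit $\mathcal{G}(x)$ through the explicit Fourier expansion (in the paper's notation)
\[
\mathcal{G}\left(  x\right)  =\sum_{k\in\mathbb{Z}^{d}}\left(  2\sum
_{\substack{n\in\mathbb{Z}^{d}\setminus\left\{  0,k\right\}\\ g\left(  n-k\right)  =g\left(
n\right) } }a_{0}\left(  n\right)  b_{0}\left(  k-n\right)  \left\vert
n\right\vert ^{-\left(  d+1\right)  /2}\left\vert k-n\right\vert ^{-\left(
d+1\right)  /2}\right)  e^{2\pi ikx}
\]
and then deduce both assertions from absolute convergence of this series. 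Two remarks.
First, the re-derivation of this formula in your opening paragraphs, and the ``main obstacle'' you flag (controlling the remainder contributions), are unnecessary for the corollary: the formula is precisely Lemma~\ref{Main Term}, and the vanishing of the remainder terms as $H\to+\infty$ is Lemma~\ref{L(p)-8} together with the proof of Theorem~\ref{1}(3). Since the corollary is stated for the limit function of that theorem, you are entitled to cite those results rather than reprove them.
Second, your AM--GM rearrangement
\[
\sum_{\substack{k,l\in\mathbb{Z}^{d}\setminus\{0\}\\ g(k)=g(l)}}|B_{k,+}||B_{l,+}|\leq \sum_{k}|B_{k,+}|^{2}\,\#\{l:g(l)=g(k)\}\leq C\sum_{k\ne 0}|k|^{-d-1}<\infty
\]
is a clean and direct application of the multiplicity hypothesis, since that hypothesis literally bounds $\#\{l:g(l)=g(k)\}$ for each fixed $k$. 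The paper instead asserts the pointwise bound $|\widehat{\mathcal{G}}(k)|\leq C|k|^{-d-1}$, using that $g(n-k)=g(n)$ together with $A|x|\leq g(x)\leq B|x|$ forces $|n|\sim|n-k|\gtrsim|k|$, so each summand is $O(|k|^{-d-1})$; to promote this to the pointwise coefficient bound one must also know that $\#\{n:g(n)=g(n-k)\}$ is bounded uniformly in $k$, and this does not follow as immediately from the multiplicity hypothesis (which controls level sets of $g$, not the equation $g(n)=g(n-k)$). Your rearranged double sum sidesteps this issue entirely. Both routes give absolute and uniform convergence of the Fourier series, hence continuity of $\mathcal{G}$; and under injectivity all coefficients with $k\ne 0$ vanish, giving a constant, exactly as in the paper.
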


The family of compact convex sets endowed with the Hausdorff metric is a
complete metric space. It can be shown that the collection of convex sets with
injective support functions is an intersection of a countable family of open
dense sets. In particular, we will observe in Remark \ref{ellissoidi} 
that the above corollary applies to
almost every ellipsoid $\left\{  \left\vert M\left(  x-p\right)  \right\vert
\leq1\right\}  $, but it does not apply to the ball $\left\{  \left\vert x\right\vert
\leq1\right\} $. The ball is different. This has as a consequence an estimate from below 
for the norm of the discrepancy.


\begin{theorem}
\label{2}\ (1) Assume that $\Sigma=\left\{  \left\vert x\right\vert
\leq1\right\}  $\ is the ball in $\mathbb{R}^{d}$ with $d\geq4$, and that
$d\mu\left(  r\right)  $ is an arbitrary Borel probability measure on $\mathbb{R}$. Then
for every $p>2d/\left(  d-3\right)  $,
\[
\limsup_{H,R\rightarrow+\infty}\left\{  \int_{\mathbb{T}^{d}}\left(
{\displaystyle\int_{\mathbb{R}}}
\left\vert r^{-\left(  d-1\right)  /2}\mathcal{D}\left(  r\Sigma-x\right)
\right\vert ^{2}d\mu_{H,R}\left(  r\right)  \right) ^{p/2}dx\right\}
^{1/p}=+\infty.
\]
(2) If in addition the Fourier transform of $d\mu\left(  r\right)  $ vanishes
at infinity, $\lim_{\left\vert \zeta\right\vert \rightarrow+\infty}\left\{
\left\vert \widehat{\mu}\left(  \zeta\right)  \right\vert \right\}  =0$, the
supremum limit can be replaced by a limit,
\[
\lim_{H,R\rightarrow+\infty}\left\{  \int_{\mathbb{T}^{d}}\left(
{\displaystyle\int_{\mathbb{R}}}
\left\vert r^{-\left(  d-1\right)  /2}\mathcal{D}\left(  r\Sigma-x\right)
\right\vert ^{2}d\mu_{H,R}\left(  r\right)  \right)  ^{p/2}dx\right\}
^{1/p}=+\infty.
\]

\end{theorem}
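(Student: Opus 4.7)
The starting point is the Poisson-type expansion
\[
\mathcal{D}(r\Sigma - x) = \sum_{k \in \mathbb{Z}^{d},\, k\neq 0} \widehat{\chi_{r\Sigma}}(k)\, e^{-2\pi i k \cdot x},\qquad \widehat{\chi_{r\Sigma}}(k) = \frac{r^{d/2}}{|k|^{d/2}}\, J_{d/2}(2\pi r |k|),
\]
together with the classical Bessel asymptotic $J_{d/2}(t) = \sqrt{2/(\pi t)}\cos(t-(d+1)\pi/4) + O(t^{-3/2})$. Squaring, integrating against $d\mu_{H,R}(r)$, and applying the product-to-sum identity to the two cosines, the double series over pairs $(k,j)\in(\mathbb{Z}^{d}\setminus\{0\})^{2}$ splits into a low-frequency part governed by $|k|-|j|$ and a high-frequency part governed by $|k|+|j|$. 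After the $r$-integration each summand carries a factor $e^{-2\pi i R(|k|\pm|j|)}\widehat{\mu}(H(|k|\pm|j|))$. The high-frequency part is absolutely convergent and harmless, while in the low-frequency part the pairs with $|k|=|j|$ contribute a universal constant, because $\widehat{\mu}(0)=1$ for every probability measure.

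This motivates introducing the diagonal function
\[
\mathcal{G}(x) = \frac{1}{2\pi^{2}}\sum_{n\geq 1} n^{-(d+1)/2} |S_{n}(x)|^{2},\qquad S_{n}(x)=\sum_{|k|^{2}=n} e^{-2\pi i k\cdot x},
\]
so that the Fourier coefficients of $F_{H,R}(x) := \int|r^{-(d-1)/2}\mathcal{D}(r\Sigma-x)|^{2}\, d\mu_{H,R}(r)$ differ from those of $\mathcal{G}$ only by sums over pairs with $|k|\neq|j|$, weighted by the $\widehat{\mu}$-factors above (plus controlled errors from the Bessel remainder). One checks that $\mathcal{G}\in L^{1}(\mathbb{T}^{d})$. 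The decisive step is the lower bound $\mathcal{G}(x)\gtrsim |x|^{-(d-3)}$ near the origin, which follows from $|S_{n}(x)|\geq r_{d}(n)/2$ for $|x|\lesssim n^{-1/2}$ combined with the classical estimate $\sum_{n\leq N} r_{d}(n)^{2} \asymp N^{d-1}$ (valid for $d\geq 4$) and a summation by parts. Integrating this pointwise estimate shows that $\mathcal{G}\notin L^{p/2}(\mathbb{T}^{d})$ precisely when $p\geq 2d/(d-3)$.

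Part (2) is now immediate. Under the hypothesis $\widehat{\mu}(\zeta)\to 0$, dominated convergence in the lattice sum gives $\widehat{F_{H,R}}(m)\to \widehat{\mathcal{G}}(m)$ as $H\to +\infty$, uniformly in $R$, for every fixed $m$. If the $L^{p/2}$ norms of $F_{H,R}$ stayed bounded along some sequence $H_{n},R_{n}\to+\infty$, reflexivity of $L^{p/2}(\mathbb{T}^{d})$ (granted since $p>2$) would produce a weakly convergent subsequence whose limit has Fourier coefficients $\widehat{\mathcal{G}}(m)$, hence equals $\mathcal{G}$ almost everywhere; weak lower semicontinuity of the norm would then place $\mathcal{G}$ in $L^{p/2}(\mathbb{T}^{d})$, contradicting the previous paragraph.

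For part (1), where $\widehat{\mu}$ need not vanish at infinity, I would replace $F_{H,R}$ by the Ces\`aro-type average $\bar{F}_{H,R,T}(x) := T^{-1}\int_{R}^{R+T} F_{H,R'}(x)\, dR'$. Minkowski's integral inequality transfers any uniform $L^{p/2}$ bound from $F_{H,R}$ to $\bar{F}_{H,R,T}$, while for each fixed $(k,j)$ with $|k|\neq|j|$ the $R'$-averaging annihilates the phase $e^{-2\pi i R'(|k|-|j|)}$ at rate $O(T^{-1})$, regardless of the value of $\widehat{\mu}(H(|k|-|j|))$. Dominated convergence in the lattice sum then yields $\widehat{\bar{F}_{H,R,T}}(m)\to \widehat{\mathcal{G}}(m)$ as $T\to+\infty$, and the weak-compactness argument from part (2) applies verbatim. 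The principal technical obstacle is the simultaneous control of three small contributions---the Bessel remainder, the high-frequency cosines involving $|k|+|j|$, and the tails of the sphere sum---while justifying the interchanges of limits; the genuinely non-trivial input is the arithmetical estimate $\sum_{n\leq N} r_{d}(n)^{2}\asymp N^{d-1}$ for $d\geq 4$.
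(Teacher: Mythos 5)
Your argument is correct in outline and reaches the same limiting function $\mathcal{G}$ as the paper (once you unwind your diagonal sum $\frac{1}{2\pi^{2}}\sum_{n}n^{-(d+1)/2}|S_{n}(x)|^{2}$, its $m$-th Fourier coefficient is $\frac{1}{2\pi^{2}}\sum_{|k|=|k-m|}|k|^{-d-1}$, exactly the paper's $\widehat{\mathcal{G}}(m)$ for the ball). Part (2) is handled identically: weak compactness in $L^{p/2}(\mathbb{T}^{d})$, convergence of Fourier coefficients, and weak lower semicontinuity. You diverge from the paper in two places, and both alternatives are sound.

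\emph{The lower bound on $\mathcal{G}$.} The paper proves $\mathcal{G}\notin L^{p/2}$ by pairing $\mathcal{G}$ with a Bessel potential, reducing to a lower bound for $\sum_{k}|k|^{-\alpha}\sum_{k\cdot n=0}(|k|^{2}+|n|^{2})^{-(d+1)/2}$, which it obtains via the explicit lattice description of $\{k\cdot n=0\}$, Blichfeldt's theorem, and Ces\`aro's coprimality theorem. You instead prove the pointwise lower bound $\mathcal{G}(x)\gtrsim|x|^{-(d-3)}$ near the origin, using $\mathrm{Re}\,S_{n}(x)\geq r_{d}(n)/2$ for $|x|\lesssim n^{-1/2}$ together with $\sum_{n\leq N}r_{d}(n)^{2}\gtrsim N^{d-1}$ and partial summation. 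This is more direct (it gives non-integrability by inspection, including the endpoint $p=2d/(d-3)$) but buries the arithmetic in the input $\sum_{n\leq N}r_{d}(n)^{2}\asymp N^{d-1}$, which for $d=4$ needs Jacobi's four-squares formula and a second-moment bound for $\sigma(n)$; the paper's Blichfeldt-plus-Ces\`aro route is essentially an alternative proof of the same count, parametrizing $|k|=|j|$ via $u=k-j$, $w=k+j$, $u\cdot w=0$. Morally the two are the same, just packaged differently.

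\emph{Part (1).} The paper replaces $d\mu$ by $\varphi\ast\mu$ with $\varphi$ a smooth bump, so that the new measure has Fourier transform vanishing at infinity; Minkowski's integral inequality transfers any uniform bound and reduces (1) to (2). You instead Ces\`aro-average in the translation variable $R'$ over $[R,R+T]$, using Minkowski to transfer the bound and the oscillation of $e^{-2\pi iR'(|k|\pm|j|)}$ (with $|k|\pm|j|\neq0$) to kill the off-diagonal terms as $T\to\infty$, irrespective of $\widehat{\mu}(H(|k|\pm|j|))$. This works, and it shows the blow-up already for $H$ fixed, but it forces you to redo the dominated-convergence-plus-weak-compactness step rather than quoting (2); the paper's convolution trick is a cleaner reduction. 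In both of your alternatives you are entitled to defer the bookkeeping for the Bessel remainder, the high-frequency $|k|+|j|$ block, and absolute convergence of the sums defining the dominating function, but a complete write-up would need the analogues of the paper's Lemmas \ref{Terms} and \ref{L(p)-8} to control them uniformly in the averaging parameter.
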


The range $p>2d/\left(  d-3\right)  $ in this theorem should be compared with the range
$p<(2d-4)/\left(  d-3\right)  $ in the previous theorem. What happens in between
 is an open problem.
In any case, observe that both these indexes $\left(  2d-4\right)  /\left(  d-3\right)  $ and
$2d/\left(  d-3\right)  $ are asymptotic to 2 as $d\rightarrow+\infty$.
The proof of this theorem reduces essentially to an estimate of the norm in
$L^{p/2}\left(  \mathbb{T}^{d}\right)  $ of the function $\mathcal{G}\left(
x\right)  $ which appears as a limit of the discrepancy in Theorem \ref{1}.
While the limit function associated to the ball $\left\{  \left\vert
x\right\vert \leq1\right\}  $\ is unbounded, for a generic ellipsoid the limit
function is constant. In particular, we do not know if the statement of the
theorem for the ball also applies to all convex sets.

The techniques used to prove the above theorems also apply to the estimates of
pure $L^{p}$ norms of the discrepancy:
\[
\left\{  {%
{\displaystyle\int_{\mathbb{T}^{d}}}
\int_{\mathbb{R}}}\left\vert r^{-\left(  d-1\right)  /2}\mathcal{D}\left(
r\Omega-x\right)  \right\vert ^{p}d\mu_{H,R}\left(  r\right)  dx\right\}
^{1/p}.
\]

This will be addressed in another paper. See also \cite{Gariboldi}.

\bigskip
Finally, we would like to thank our friends Luca Brandolini and Giancarlo Travaglini
for continuous discussions during the preparation of this paper. 

\section{Previous results} \label{previous}
In order to put in an appropriate perspective our results, let us
present a short non-exhaustive list of previous results on the discrepancy.

Studying the arithmetical function $r(n)$, the number of integer pairs $(h,k)$
with $n=h^{2}+k^{2}$, G.H.Hardy in \cite{Hardy_sotto} and E.Landau in \cite{Landau} proved that the
discrepancy $\left\vert
{\sum_{n\leq T}}
r(n)-\pi T\right\vert $ can be larger than $CT^{1/4}\log^{1/4}\left(
T\right)  $. On the other hand, in \cite{Hardy2} Hardy proved that the
mean square average of this discrepancy satisfies for every $\varepsilon>0$ the
estimate
\[
\left\{  \dfrac{1}{T}%
{\displaystyle\int_{0}^{T}}
\left\vert
{\displaystyle\sum_{n\leq t}}
r(n)-\pi t\right\vert ^{2}dt\right\}  ^{1/2}\leq CT^{1/4+\varepsilon}.
\]

In our notation $\sum_{n\leq t}r(n)-\pi t$ is nothing but the discrepancy
$\mathcal{D}\left(  \sqrt{t}\Omega\right)  $ of the disc $\Omega=\left\{
\left\vert x\right\vert \leq1\right\}  $ in the plane. Hardy also stated that
it is not unlikely that the supremum norm of this discrepancy is dominated by
$T^{1/4+\varepsilon}$ for every $\varepsilon>0$. This is the so called Gauss
circle problem.

H.Cramer in \cite{Cramer} removed the $\varepsilon$ in the theorem of Hardy
and proved the more precise asymptotic estimate
\[
\lim_{T\rightarrow+\infty}\left\{  T^{-3/2}%
{\displaystyle\int_{0}^{T}}
\left\vert
{\displaystyle\sum_{n\leq t}}
r(n)-\pi t\right\vert ^{2}dt\right\}  ^{1/2}=\left\{  \dfrac{1}{3\pi^{2}}%
{\displaystyle\sum_{n=1}^{+\infty}}
\frac{r(n)^{2}}{n^{3/2}}\right\}  ^{1/2}.
\]

The distribution and higher power moment in the Gauss circle problem and the
related Dirichlet divisor problem have been studied by D.R.Heath-Brown in
\cite{Heath} and by K.M.Tsang in \cite{tsang}.
See also \cite{Jarnik} for a tridimensional version of these results.

The above results for the disc and the ball have been extended to other domains. In \cite{Nowak85}, W.Nowak  proved that if $\Omega$ is a convex set in the plane
with smooth boundary with strictly positive curvature, then for every $R\geq
1$,
\[
\left\{  \dfrac{1}{R}{%
{\displaystyle\int_{0}^{R}}
}\left\vert \mathcal{D}\left(  r\Omega\right)  \right\vert ^{2}dr\right\}
^{1/2}\leq CR^{1/2}.
\]

Indeed, P.Bleher proved in \cite{B} a more precise asymptotic estimate: There
exists an explicit constant $C$ such that
\[
\lim_{R\rightarrow+\infty}R^{-1/2}\left\{  \dfrac{1}{R}{%
{\displaystyle\int_{0}^{R}}
}\left\vert \mathcal{D}\left(  r\Omega\right)  \right\vert ^{2}dr\right\}
^{1/2}=C.
\]

M.Huxley in \cite{Huxley1} considered the mean value of the discrepancy over
short intervals and proved that if $\Omega$ is a convex set in the plane with
smooth boundary with strictly positive curvature, then
\[
\left\{  {%
{\displaystyle\int_{R}^{R+1}}
}\left\vert \mathcal{D}\left(  r\Omega\right)  \right\vert ^{2}dr\right\}
^{1/2}\leq CR^{1/2}\log^{1/2}\left(  R\right)  .
\]

W.Nowak in \cite{Nowak} proved that the above estimate remains valid also when
the integration is over the interval $R\leq r\leq R+\log\left(  R\right)  $,
while for $H\leq R$ but $H/\log\left(  R\right)  \rightarrow+\infty$ he proved
the more precise asymptotic estimate
\[
\lim_{R\rightarrow+\infty}R^{-1/2}\left\{  \dfrac{1}{H}{%
{\displaystyle\int_{R}^{R+H}}
}\left\vert \mathcal{D}\left(  r\Omega\right)  \right\vert ^{2}dr\right\}
^{1/2}=C.
\]

A.Iosevich, E.Sawyer, A.Seeger in \cite{ISS} and \cite{ISS2} extended the above results to
convex sets in $\mathbb{R}^{d}$ with smooth boundary with strictly positive
Gaussian curvature,
\[
\left\{  \dfrac{1}{R}{%
{\displaystyle\int_{R}^{2R}}
}\left\vert \mathcal{D}\left(  r\Omega\right)  \right\vert ^{2}dr\right\}
^{1/2}\leq\left\{
\begin{array}
[c]{ll}%
CR^{1/2} & \text{if }d=2\text{,}\\
CR\log\left(  R\right)  & \text{if }d=3\text{,}\\
CR^{d-2} & \text{if }d>3\text{.}%
\end{array}
\right.
\]

The above are results on the averages of the discrepancy under dilations.
D.G.Kendall considered the mean square average of the discrepancy under
translations and proved in \cite{Kendall} that if $\Omega$ is an oval in
$\mathbb{R}^{d}$,
\[
\left\{  {%
{\displaystyle\int_{\mathbb{T}^{d}}}
}\left\vert \mathcal{D}\left(  R\Omega-x\right)  \right\vert ^{2}dx\right\}
^{1/2}\leq CR^{\left(  d-1\right)  /2}.
\]

Mean square averages of the discrepancy under rotations of the domain have been considered
by A.Iosevich in \cite{Ios}, and more general random discrepancies have been discussed in \cite{IK}.

The study of the $L^{p}$ norm of the discrepancy with $p\neq2$ is more recent
and the results are less complete.
In \cite{BCT} the authors studied the $L^{p}$ norm of the discrepancy for
rotated and translated polyhedra $\Omega$ in $\mathbb{R}^{d}$,
\[
\left\{  {%
{\displaystyle\int_{\mathbb{SO}\left(  d\right)  }}
}{%
{\displaystyle\int_{\mathbb{T}^{d}}}
}\left\vert \mathcal{D}\left(  \sigma R\Omega-x\right)  \right\vert
^{p}dxd\sigma\right\}  ^{1/p}\leq%
\begin{cases}
C\log^{d}\left(  R\right)  & \text{if }p=1\text{,}\\
CR^{\left(  d-1\right)  \left(  1-1/p\right)  } & \text{if }1<p\leq
+\infty\text{.}%
\end{cases}
\]

In the same paper it was also proved that the above inequalities can be
reversed, at least for a simplex and for $p>1$. In other words, the $L^{p}$
discrepancy of a polyhedron grows with $p$. On the contrary, for certain
domains with curvature there exists a range of indices $p$ where the $L^{p}$
discrepancy is of the same order as the $L^{2}$ discrepancy, possibly up to a
logarithmic transgression. Indeed, M.Huxley in \cite{Huxley3} proved that if
$\Omega$ is a convex set in the plane with boundary with continuous positive
curvature, then
\[
\left\{  {%
{\displaystyle\int_{\mathbb{T}^{2}}}
}\left\vert \mathcal{D}\left(  R\Omega-x\right)  \right\vert ^{4}dx\right\}
^{1/4}\leq CR^{1/2}\log^{1/4}\left(  R\right)  .
\]

In \cite{BCGT} the authors extended the above result to convex sets with
smooth boundary with positive Gaussian curvature in higher dimensions,
\[
\left\{  {%
{\displaystyle\int_{\mathbb{T}^{d}}}
}\left\vert \mathcal{D}\left(  R\Omega-x\right)  \right\vert ^{p}dx\right\}
^{1/p}\leq%
\begin{cases}
CR^{\left(  d-1\right)  /2} & \text{if }p<2d/\left(  d-1\right)  \text{,}\\
CR^{\left(  d-1\right)  /2}\log^{(d-1)/2d}\left(  R\right)  & \text{if
}p=2d/\left(  d-1\right)  \text{.}%
\end{cases}
\]

The present paper continues this line of research.
Let us conclude this section with a few examples.

\begin{example}
If $d\mu\left(  r\right)  $ is the unit mass concentrated at $r=0$, then
$\widehat{\mu}\left(  \zeta\right)  =1$, so that $\beta=0$, and the
$L^{p}\left(  L^{2}\right)  $ mixed norm in Theorem \ref{1} reduces to a pure
$L^{p}$ norm, and one obtains
\begin{align*}
&  \left\{  \int_{\mathbb{T}^{d}}\left\vert R^{-\left(  d-1\right)
/2}\mathcal{D}\left(  R\Omega-x\right)  \right\vert ^{p}dx\right\}  ^{1/p}\\
&  \leq\left\{
\begin{array}
[c]{ll}%
C\left(  2d/\left(  d-1\right)  -p\right)  ^{-\left(  d-1\right)  /2d} &
\text{if }d\geq2\text{ and }p<2d/\left(  d-1\right)  \text{,}\\
C\log^{\left(  d-1\right)  /2d}\left(  1+R\right)  & \text{if }d\geq2\text{
and }p=2d/\left(  d-1\right)  \text{.}%
\end{array}
\right.
\end{align*}
In particular, one recovers some of the results in \cite{Huxley3} and
\cite{BCGT}.
\end{example}

\begin{example}
If $d\mu\left(  r\right)  $ is the uniformly distributed measure in $\left\{
0<r<1\right\}  $, then the $L^{p}\left(  L^{2}\right)  $ mixed norm in
Theorem \ref{1} is
\[
\left\{  \int_{\mathbb{T}^{d}}\left( \dfrac{1}{H}%
{\displaystyle\int_{R}^{R+H}}
\left\vert r^{-\left(  d-1\right)  /2}\mathcal{D}\left(  r\Omega-x\right)
\right\vert ^{2}dr\right) ^{p/2}dx\right\}  ^{1/p}.
\]
The Fourier transform of the uniformly distributed measure in $\left\{
0<r<1\right\}  $ has decay $\beta=1$,
\[
\widehat{\mu}\left(  \zeta\right)  =%
{\displaystyle\int_{0}^{1}}
\exp\left(  -2\pi i\zeta r\right)  dr=\exp\left(  -\pi i\zeta\right)
\dfrac{\sin\left(  \pi\zeta\right)  }{\pi\zeta}.
\]
On the other hand, if $\psi\left(  r\right)  $\ is a non negative smooth
function with integral one and support in $0\leq r\leq1$, one can consider a
smoothed average
\[
\left\{  \int_{\mathbb{T}^{d}}\left( \int_{\mathbb{R}}\left\vert r^{-\left(
d-1\right)  /2}\mathcal{D}\left(  r\Omega-x\right)  \right\vert ^{2}H^{-1}%
\psi\left(  H^{-1}\left(  r-R\right)  \right)  dr\right)  ^{p/2}dx\right\}
^{1/p}.
\]
This smoothed average is equivalent to the uniform average over $\left\{
R<r<R+H\right\}  $, but the decay of the Fourier transform $\widehat{\psi
}\left(  \zeta\right)  $ is faster than any power $\beta$. Hence for the
uniformly distributed measure in $\left\{  0<r<1\right\}  $ the theorem
applies with the indexes corresponding to $\beta>1$:
\begin{align*}
&  \left\{  \int_{\mathbb{T}^{d}}\left( \dfrac{1}{H}%
{\displaystyle\int_{R}^{R+H}}
\left\vert r^{-\left(  d-1\right)  /2}\mathcal{D}\left(  r\Omega-x\right)
\right\vert ^{2}dr\right)  ^{p/2}dx\right\}  ^{1/p}\\
&  \leq\left\{
\begin{array}
[c]{ll}%
Cp^{1/2} & \text{if }d=2\text{ and }p<+\infty\text{,}\\
C\log^{1/2}\left(  1+R\right)  & \text{if }d=2\text{ and }p=+\infty\text{,}\\
C\left(  6-p\right)  ^{-1/3} & \text{if }d=3\text{ and }p<6\text{,}\\
C\log^{1/3}\left(  1+R\right)  & \text{if }d=3\text{ and }p=6\text{,}\\
C\left(  \left(  2d-4\right)  /\left(  d-3\right)  -p\right)  ^{-\left(
d-3\right)  /\left(  2d-4\right)  } & \text{if }d\geq4\text{ and }p<\left(
2d-4\right)  /\left(  d-3\right)  \text{,}\\
C\log^{\left(  d-3\right)  /\left(  2d-4\right)  }\left(  1+R\right)  &
\text{if }d\geq4\text{ and }p=\left(  2d-4\right)  /\left(  d-3\right)
\text{.}%
\end{array}
\right.
\end{align*}
Observe that the range of indexes in the above theorem and corollaries for
which the mixed $L^{p}\left(  L^{2}\right)  $ norm remains uniformly bounded
is larger than the range of indexes in \cite{Huxley3} and \cite{BCGT} quoted in the previous example.
\end{example}

\begin{example}
As an intermediate case between the two preceeding examples, one can consider a measure $d\mu\left(  r\right)=r^{-\alpha}\chi_{\left\{  0<r<1\right\}
}\left(  r\right)  dr$, with $0<\alpha<1$. In this case $\left\vert \widehat{\mu
}\left(  \zeta\right)  \right\vert \leq C\left(  1+\left\vert \zeta\right\vert
\right)  ^{\alpha-1}$, that is $\beta=1-\alpha$.
As a more sophisticated intermediate example, recall that a probability measure is a Salem measure if its Fourier dimension $\gamma
=\sup\left\{  \delta:\ \left\vert \widehat{\mu}\left(  \zeta\right)
\right\vert \leq C\left(  1+\left\vert \zeta\right\vert \right)  ^{-\delta
/2}\ \right\}  $ is equal to the Hausdorff dimension of the support. Such
measures exist for every dimension $0<\gamma<1$. See \cite[Section 12.17]{PM}. 
The above theorem and
corollary assert that the discrepancy cannot be too large in mean on the
supports of translated and dilates of these measures.
\end{example}

\section{Proofs of theorems and corollaries}

The proofs will be splitted into a number of lemmas, some of them well known.
The starting point is the observation of D.G.Kendall that the discrepancy
$\mathcal{D}\left(  r\Omega-x\right)  $ is a periodic function of the
translation, and it has a Fourier expansion with coefficients that are a
sampling of the Fourier transform of $\Omega$,
\[
\widehat{\chi}_{\Omega}\left(  \xi\right)  =%
{\displaystyle\int_{\Omega}}
\exp\left(  -2\pi i\xi x\right)  dx.
\]

\begin{lemma}
\label{Fourier} The number of integer points in $r\Omega-x$, a translated by a
vector $x\in\mathbb{R}^{d}$\ and dilated by a factor $r>0$\ of a domain
$\Omega$\ in the $d$ dimensional Euclidean space, is a periodic function of the
translation with Fourier expansion
\[%
{\displaystyle\sum_{k\in\mathbb{Z}^{d}}}
\chi_{r\Omega-x}(k)=%
{\displaystyle\sum_{n\in\mathbb{Z}^{d}}}
r^{d}\widehat{\chi}_{\Omega}\left(  rn\right)  \exp(2\pi inx).
\]
In particular,
\[
\mathcal{D}\left(  r\Omega-x\right)  =%
{\displaystyle\sum_{n\in\mathbb{Z}^{d}\setminus\left\{  0\right\}  }}
r^{d}\widehat{\chi}_{\Omega}\left(  rn\right)  \exp(2\pi inx).
\]

\end{lemma}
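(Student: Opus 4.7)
The plan is to recognize this identity as the Poisson summation formula applied to the characteristic function of the dilated body $r\Omega$, viewed as a $\mathbb{Z}^{d}$-periodic function of the translation parameter $x$, and to read off the Fourier coefficients directly.

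First, I would rewrite $\chi_{r\Omega-x}(k)=\chi_{r\Omega}(x+k)$, so that
\[
F(x):=\sum_{k\in\mathbb{Z}^{d}}\chi_{r\Omega-x}(k)=\sum_{k\in\mathbb{Z}^{d}}\chi_{r\Omega}(x+k).
\]
Since $r\Omega$ is bounded, the series is locally a finite sum, and $F$ is manifestly $\mathbb{Z}^{d}$-periodic, with $F\in L^{\infty}(\mathbb{T}^{d})\subset L^{2}(\mathbb{T}^{d})$. Hence $F$ admits a Fourier series, and it suffices to identify its coefficients.

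Second, I would compute the Fourier coefficients by the standard unfolding trick: integrate against $\exp(-2\pi in\cdot x)$ on $\mathbb{T}^{d}$, interchange sum and integral (legitimate since locally finite), translate each summand by the lattice vector $k$ using periodicity of the exponential, and assemble the integrals over $\mathbb{T}^{d}+k$ into a single integral over $\mathbb{R}^{d}$. This yields
\[
\widehat{F}(n)=\int_{\mathbb{R}^{d}}\chi_{r\Omega}(y)\exp(-2\pi in\cdot y)\,dy=\widehat{\chi_{r\Omega}}(n),
\]
and the elementary scaling $y=rz$ gives $\widehat{\chi_{r\Omega}}(n)=r^{d}\widehat{\chi_{\Omega}}(rn)$, which is the claimed formula.

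Third, I would address convergence of the Fourier series. The identity is understood in the sense of $L^{2}(\mathbb{T}^{d})$, which is exactly the framework used in the rest of the paper through Plancherel's theorem. Under the standing hypothesis on $\Omega$ (smooth boundary with strictly positive Gaussian curvature), one has the classical decay $|\widehat{\chi_{\Omega}}(\xi)|=O((1+|\xi|)^{-(d+1)/2})$, which makes the series absolutely convergent in dimension $d=2$ and $L^{2}$-convergent in all dimensions; in particular, pointwise and $L^{2}$ interpretations coincide where the series converges absolutely.

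The expression for $\mathcal{D}(r\Omega-x)$ then follows by isolating the $n=0$ Fourier coefficient, which equals $r^{d}\widehat{\chi_{\Omega}}(0)=r^{d}|\Omega|$, and subtracting it from both sides. The only mild subtlety is that $\chi_{r\Omega}$ is merely $L^{\infty}$, not Schwartz, so Poisson summation is not automatic in a pointwise sense; but the $L^{2}(\mathbb{T}^{d})$ version, which is what is needed here, is obtained entirely from the direct computation of Fourier coefficients above and does not require any further regularization.
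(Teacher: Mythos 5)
Your argument is a direct proof of the Poisson summation formula for the periodization $F(x)=\sum_{k}\chi_{r\Omega}(x+k)$, which is precisely what the paper's one-line proof invokes; the unfolding computation of $\widehat{F}(n)$ and the scaling $\widehat{\chi}_{r\Omega}(n)=r^{d}\widehat{\chi}_{\Omega}(rn)$ are correct, so the two proofs are essentially the same. One small slip in your closing remark: the decay $|\widehat{\chi}_{\Omega}(\xi)|=O((1+|\xi|)^{-(d+1)/2})$ gives $L^{2}(\mathbb{T}^{d})$ convergence in every dimension $d\geq 1$, but it does \emph{not} give absolute convergence when $d=2$, since $\sum_{n\in\mathbb{Z}^{2}}|n|^{-3/2}$ diverges (one would need an exponent exceeding $d$); indeed the paper's Remark~\ref{r1} emphasizes that the discrepancy is discontinuous, so the Fourier series cannot converge absolutely or uniformly in any dimension. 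This does not affect the validity of the identity in the $L^{2}$ sense, which is all the lemma needs.
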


\begin{proof}
This is a particular case of the Poisson summation formula.
\end{proof}

\begin{remark}\label{r1}
We emphasize that the Fourier expansion of the discrepancy converges at least
in $L^{2}\left(  \mathbb{T}^{d}\right)  $, but we are not claiming that it
converges pointwise. Indeed, the discrepancy is discontinuous, hence the
associated Fourier expansion does not converge absolutely or uniformly. To
overcome this problem, one can introduce a mollified discrepancy. If the
domain $\Omega$\ is convex and contains the origin, then there exists
$\varepsilon>0$\ such that if $\varphi\left(  x\right)  $\ is a non negative
smooth radial function with support in $\left\{  \left\vert x\right\vert
\leq\varepsilon\right\}  $\ and with integral 1, and if $0<\delta\leq1$\ and
$r\geq1$, then
\begin{gather*}
\left\vert \Omega\right\vert \left(  \left(  r-\delta\right)  ^{d}%
-r^{d}\right)  +\left(  r-\delta\right)  ^{d}%
{\displaystyle\sum_{n\in\mathbb{Z}^{d}\setminus\left\{  0\right\}  }}
\widehat{\varphi}\left(  \delta n\right)  \widehat{\chi}_{\Omega}\left(
\left(  r-\delta\right)  n\right)  \exp\left(  2\pi inx\right) \\
\leq%
{\displaystyle\sum_{n\in\mathbb{Z}^{d}}}
\chi_{r\Omega}(n+x)-\left\vert \Omega\right\vert r^{d}\\
\leq\left\vert \Omega\right\vert \left(  \left(  r+\delta\right)  ^{d}%
-r^{d}\right)  +\left(  r+\delta\right)  ^{d}%
{\displaystyle\sum_{n\in\mathbb{Z}^{d}\setminus\left\{  0\right\}  }}
\widehat{\varphi}\left(  \delta n\right)  \widehat{\chi}_{\Omega}\left(
\left(  r+\delta\right)  n\right)  \exp\left(  2\pi inx\right)  .
\end{gather*}
One has $\left\vert \left(  r+\delta\right)  ^{d}-r^{d}\right\vert \leq
Cr^{d-1}\delta$, and one can define the mollified discrepancy
\[
\left(  r\pm\delta\right)  ^{d}%
{\displaystyle\sum_{n\in\mathbb{Z}^{d}\setminus\left\{  0\right\}  }}
\widehat{\varphi}\left(  \delta n\right)  \widehat{\chi}_{\Omega}\left(
\left(  r\pm\delta\right)  n\right)  \exp\left(  2\pi inx\right)  .
\]
Observe that the discrepancy is the limit of this mollified discrepancy as
$\delta\rightarrow0+$. Also observe that since $\left\vert \widehat{\varphi
}\left(  \zeta\right)  \right\vert \leq C\left(  1+\left\vert \zeta\right\vert
\right)  ^{-\gamma}$ for every $\gamma>0$, the mollified Fourier
expansion has no problems of convergence.
\end{remark}

\begin{lemma}
\label{Fourier Asymptotic copy(1)} Assume that $\Omega$\ is a convex body in
$\mathbb{R}^{d}$\ with smooth boundary and everywhere positive Gaussian
curvature. Define the support function $g\left(  x\right)  =\sup_{y\in\Omega
}\left\{  x\cdot y\right\}  $. Then, there exist functions $\left\{
a_{j}\left(  \xi\right)  \right\}  _{j=0}^{+\infty}$ and $\left\{
b_{j}\left(  \xi\right)  \right\}  _{j=0}^{+\infty}$ homogeneous of degree $0$
and smooth in $\mathbb{R}^{d}\setminus\left\{  0\right\}  $ such that the Fourier
transform of the characteristic function of $\Omega$ for $\left\vert
\xi\right\vert \rightarrow+\infty$ has the asymptotic expansion
\begin{align*}
&  \widehat{\chi}_{\Omega}\left(  \xi\right)  =%
{\displaystyle\int_{\Omega}}
\exp\left(  -2\pi i\xi\cdot x\right)  dx\\
&  =\exp\left(  -2\pi ig\left(  \xi\right)  \right)  \left\vert \xi\right\vert
^{-\left(  d+1\right)  /2}%
{\displaystyle\sum_{j=0}^{h}}
a_{j}\left(  \xi\right)  \left\vert \xi\right\vert ^{-j}+\exp\left(  2\pi
ig\left(  -\xi\right)  \right)  \left\vert \xi\right\vert ^{-\left(
d+1\right)  /2}%
{\displaystyle\sum_{j=0}^{h}}
b_{j}\left(  \xi\right)  \left\vert \xi\right\vert ^{-j}\\
&  +\mathcal{O}\left(  \left\vert \xi\right\vert ^{-\left(  d+2h+3\right)
/2}\right)  .
\end{align*}
The functions $a_{j}\left(  \xi\right)  $ and $b_{j}\left(  \xi\right)  $
depend on a finite number of derivatives of a parametrization of the boundary
of $\Omega$ at the points with outward unit normal $\pm\xi/\left\vert
\xi\right\vert $. In particular, $a_{0}\left(  \xi\right)  $ and $b_{0}\left(
\xi\right)  $ are, up to some absolute constants, equal to $K\left(  \pm
\xi\right)  ^{-1/2}$, with $K\left(  \pm\xi\right)  $\ the Gaussian curvature
of $\partial\Omega$\ at the points with outward unit normal $\pm\xi/\left\vert
\xi\right\vert $.
\end{lemma}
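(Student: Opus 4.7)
The plan is to convert the volume integral to a surface integral and then apply the method of stationary phase, using the fact that the positivity of the Gaussian curvature makes the Gauss map a diffeomorphism. Write
\[
e^{-2\pi i\xi\cdot x}=\frac{-1}{2\pi i|\xi|^{2}}\,\nabla\cdot\bigl(\xi\,e^{-2\pi i\xi\cdot x}\bigr),
\]
so the divergence theorem turns the volume integral into
\[
\widehat{\chi}_{\Omega}(\xi)=\frac{i}{2\pi|\xi|^{2}}\int_{\partial\Omega}\bigl(\xi\cdot\nu(y)\bigr)\,e^{-2\pi i\xi\cdot y}\,d\sigma(y),
\]
where $\nu(y)$ is the outward unit normal. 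Since $\partial\Omega$ is smooth with strictly positive Gaussian curvature, the Gauss map $\nu:\partial\Omega\to S^{d-1}$ is a diffeomorphism, so I can parametrize $\partial\Omega$ by its inverse $y=\gamma(\omega)$, $\omega\in S^{d-1}$. For $\eta=\xi/|\xi|$, the phase on the sphere is $\Phi_{\eta}(\omega)=\eta\cdot\gamma(\omega)$, and its tangential gradient vanishes exactly when $\eta\perp T_{\omega}S^{d-1}$, i.e.\ at $\omega=\pm\eta$. By definition of $\gamma$ as the inverse Gauss map, $\gamma(\eta)$ is the point of $\partial\Omega$ maximizing $\eta\cdot y$, hence $\Phi_{\eta}(\eta)=g(\eta)$ and similarly $\Phi_{\eta}(-\eta)=-g(-\eta)$. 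This is what produces the two oscillatory factors $e^{-2\pi ig(\xi)}$ and $e^{2\pi ig(-\xi)}$ in the statement.

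Next I would use a smooth partition of unity $1=\psi_{+}(\omega;\eta)+\psi_{-}(\omega;\eta)+\psi_{0}(\omega;\eta)$ on $S^{d-1}$ depending smoothly on $\eta$, with $\psi_{\pm}$ supported in a small fixed neighborhood of $\pm\eta$ and $\psi_{0}$ supported where $|\nabla_{S^{d-1}}\Phi_{\eta}|$ is bounded below. On the support of $\psi_{0}$ repeated integration by parts against the operator
\[
\frac{1}{-2\pi i|\xi|}\,\frac{\nabla_{S^{d-1}}\Phi_{\eta}}{|\nabla_{S^{d-1}}\Phi_{\eta}|^{2}}\cdot\nabla_{S^{d-1}}
\]
gives a contribution that is $O(|\xi|^{-N})$ for every $N$, so it is absorbed into the remainder. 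On the support of $\psi_{\pm}$ I would apply the standard method of stationary phase in $d-1$ variables to the integral $\int_{S^{d-1}}e^{-2\pi i|\xi|\Phi_{\eta}(\omega)}\psi_{\pm}(\omega;\eta)(\xi\cdot\nu)\,d\sigma$, which produces an asymptotic expansion
\[
e^{\mp 2\pi ig(\pm\xi)}\,|\xi|\,|\xi|^{-(d-1)/2}\sum_{j=0}^{h}c_{j}^{\pm}(\eta)|\xi|^{-j}+O\bigl(|\xi|^{-(d-1)/2-h-1}\bigr),
\]
where the extra $|\xi|$ comes from $\xi\cdot\nu(\gamma(\pm\eta))=\pm|\xi|$. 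Dividing by $2\pi|\xi|^{2}$ yields the leading power $|\xi|^{-(d+1)/2}$ and a full asymptotic series in powers of $|\xi|^{-1}$; the coefficients $c_{j}^{\pm}(\eta)$ are smooth functions of $\eta\in S^{d-1}$ involving a bounded number of derivatives of $\gamma$ at $\pm\eta$, and extending them as functions homogeneous of degree zero in $\mathbb{R}^{d}\setminus\{0\}$ defines the $a_{j}(\xi)$ and $b_{j}(\xi)$.

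The main technical obstacle is the identification of the leading coefficient with $K(\pm\xi)^{-1/2}$. For this I would compute the tangential Hessian of $\Phi_{\eta}$ at $\omega=\pm\eta$: differentiating $\nu(\gamma(\omega))=\omega$ shows that $d\gamma$ at $\pm\eta$ is the inverse of the Weingarten map at $\gamma(\pm\eta)$, whose determinant is the Gaussian curvature $K(\pm\eta)$. Consequently the Hessian determinant of $\Phi_{\eta}$ at the critical point equals $\pm K(\pm\eta)^{-1}$ up to signs dictated by the signature, and the stationary phase formula contributes the factor $|{\det\mathrm{Hess}\,\Phi_{\eta}}|^{-1/2}=K(\pm\eta)^{1/2}$ in the denominator, i.e.\ $K(\pm\xi)^{-1/2}$ by homogeneity, as claimed. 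Uniformity of all these expansions in $\eta\in S^{d-1}$ (hence smoothness of $a_{j},b_{j}$) follows because $\gamma$, the Hessian, and the partition of unity all depend smoothly on $\eta$, and the estimates in stationary phase and non-stationary phase are locally uniform in the parameters.
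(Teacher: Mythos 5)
The paper does not give a proof of this lemma at all: it simply declares it a classical result, cites the references (Gelfand--Graev--Vilenkin, Herz, Hlawka, Stein), and illustrates it on the ball $\{|x|\le R\}$ via Bessel function asymptotics. Your proposal therefore supplies a genuine proof where the paper relies on citation, and the route you take --- divergence theorem to reduce to a surface integral, inverse Gauss map to parametrize $\partial\Omega$ by $S^{d-1}$, non-stationary phase away from $\pm\eta$ and stationary phase near $\pm\eta$ --- is exactly the standard one underlying those references. The identification of the two stationary points with $\pm\eta$, the values $\Phi_\eta(\eta)=g(\eta)$ and $\Phi_\eta(-\eta)=-g(-\eta)$, the power $|\xi|^{-(d+1)/2}$, and the full asymptotic series in $|\xi|^{-1}$ with coefficients smooth and homogeneous of degree zero are all correct and correctly argued.

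There is, however, a bookkeeping slip in the identification of the leading coefficient with $K^{-1/2}$. When you pull the surface integral back to $S^{d-1}$ via $y=\gamma(\omega)$, the surface measure transforms as $d\sigma_{\partial\Omega}(y)=\bigl|\det d\gamma_\omega\bigr|\,d\sigma_{S^{d-1}}(\omega)=K(\gamma(\omega))^{-1}\,d\sigma_{S^{d-1}}(\omega)$, since $d\gamma_\omega$ is the inverse of the Weingarten map; this Jacobian must be kept in the amplitude, so the integral you should be expanding is
\[
\int_{S^{d-1}}(\xi\cdot\omega)\,e^{-2\pi i|\xi|\Phi_\eta(\omega)}\,\frac{\psi_{\pm}(\omega;\eta)}{K(\gamma(\omega))}\,d\sigma(\omega),
\]
not the one you wrote (which has no $K^{-1}$). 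You correctly compute $\bigl|\det\mathrm{Hess}\,\Phi_\eta\bigr|=K(\pm\eta)^{-1}$ at the critical points, so the stationary phase factor is $\bigl|\det\mathrm{Hess}\,\Phi_\eta\bigr|^{-1/2}=K(\pm\eta)^{1/2}$ --- but this factor enters the leading term multiplicatively, not ``in the denominator''; as written your argument yields $K^{1/2}$, the wrong power. The extra $K^{-1}$ from the Jacobian is precisely what turns $K^{1/2}$ into the asserted $K^{-1/2}$, since the amplitude at $\omega=\pm\eta$ is $\pm|\xi|\,K(\pm\eta)^{-1}$. Once this is inserted the proof is correct.
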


\begin{proof}
This is a classical result. See e.g. \cite{GGV}, 
\cite{Herz_decad}, \cite{Hlawka}, \cite{Stein}. Here, as an explicit example, we just recall that the
Fourier transform of a ball $\left\{  x\in\mathbb{R}^{d}:\left\vert
x\right\vert \leq R\right\}  $\ can be expressed in terms of a Bessel
function, and Bessel functions have simple asymptotic expansions in terms of
trigonometric functions,
\begin{align*}
&  \widehat{\chi}_{\left\{  \left\vert x\right\vert \leq R\right\}  }\left(
\xi\right)  =R^{d}\widehat{\chi}_{\left\{  \left\vert x\right\vert
\leq1\right\}  }\left(  R\xi\right)  =R^{d}\left\vert R\xi\right\vert
^{-d/2}J_{d/2}\left(  2\pi\left\vert R\xi\right\vert \right) \\
&  \approx\pi^{-1}R^{(d-1)/2}\left\vert \xi\right\vert ^{-\left(  d+1\right)
/2}\cos\left(  2\pi R\left\vert \xi\right\vert -\left(  d+1\right)
\pi/4\right) \\
&  -2^{-4}\pi^{-2}\left(  d^{2}-1\right)  R^{(d-3)/2}\left\vert \xi\right\vert
^{-\left(  d+3\right)  /2}\sin\left(  2\pi R\left\vert \xi\right\vert -\left(
d+1\right)  \pi/4\right)  +...
\end{align*}
More generally, also the Fourier transform of an ellipsoid, that is an affine
image of a ball, can be expressed in terms of Bessel functions. See \cite{SW}.
\end{proof}

\begin{lemma}
\label{Asymptotic Discrepancy} Assume that $\Omega$\ is a convex body in
$\mathbb{R}^{d}$\ with smooth boundary and everywhere positive Gaussian
curvature. Let $z$ be a complex parameter, and for every $j=0,1,2,...$ and
$r\geq1$, with the notation of the previous lemmas, let define the tempered
distributions $\Phi_{j}\left(  z,r,x\right)  $ via the Fourier expansions
\begin{align*}
&  \Phi_{j}\left(  z,r,x\right)  =r^{-j}%
{\displaystyle\sum_{n\in\mathbb{Z}^{d}\setminus\left\{  0\right\}  }}
a_{j}\left(  n\right)  \left\vert n\right\vert ^{-z-j}\exp\left(  -2\pi
ig\left(  n\right)  r\right)  \exp\left(  2\pi inx\right) \\
&  +r^{-j}%
{\displaystyle\sum_{n\in\mathbb{Z}^{d}\setminus\left\{  0\right\}  }}
b_{j}\left(  n\right)  \left\vert n\right\vert ^{-z-j}\exp\left(  2\pi
ig\left(  -n\right)  r\right)  \exp\left(  2\pi inx\right)  .
\end{align*}

(1) If $\operatorname{Re}\left(  z\right)  +j>d/2$ then the Fourier expansion
that defines $\Phi_{j}\left(  z,r,x\right)  $ converges in $L^{2}\left(
\mathbb{T}^{d}\right)  $. If $\operatorname{Re}\left(  z\right)  +j>d$ then
the convergence is absolute and uniform.

(2) Let
\[
\mathcal{R}_{h}\left(  r,x\right)  =r^{-\left(  d-1\right)  /2}\mathcal{D}%
\left(  r\Omega-x\right)  -%
{\displaystyle\sum_{j=0}^{h}}
\Phi_{j}\left(  \left(  d+1\right)  /2,r,x\right)  .
\]
If $h>\left(  d-3\right)  /2$ there exists $C$ such that for every $r\geq1$,
\[
\left\vert \mathcal{R}_{h}\left(  r,x\right)  \right\vert \leq Cr^{-h-1}.
\]

\end{lemma}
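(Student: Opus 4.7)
The plan is to prove (1) by a direct size estimate on the Fourier coefficients and (2) by identifying the Fourier coefficients of $\mathcal{R}_h$ via the asymptotic expansion of $\widehat{\chi}_{\Omega}$. For part (1), each $a_j$ and $b_j$ is smooth and homogeneous of degree $0$ on $\mathbb{R}^d\setminus\{0\}$, hence bounded by some constant $C_j$, so the $n$-th Fourier coefficient of $\Phi_j(z,r,x)$ is majorized by $2C_j r^{-j}|n|^{-\operatorname{Re}(z)-j}$. The standard lattice-point criteria $\sum_{n\neq 0}|n|^{-2s}<\infty\iff 2s>d$ and $\sum_{n\neq 0}|n|^{-s}<\infty\iff s>d$ then give $L^2$ convergence when $\operatorname{Re}(z)+j>d/2$ (via Parseval) and absolute, hence uniform, convergence when $\operatorname{Re}(z)+j>d$ (via the Weierstrass $M$-test).

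For part (2), I would start from the Poisson formula of Lemma \ref{Fourier}, which identifies the Fourier coefficients of the $L^2(\mathbb{T}^d)$ function $x\mapsto r^{-(d-1)/2}\mathcal{D}(r\Omega-x)$ as $r^{(d+1)/2}\widehat{\chi}_{\Omega}(rn)$ for $n\neq 0$. Substituting $\xi=rn$ into the asymptotic expansion of Lemma \ref{Fourier Asymptotic copy(1)} and using that $g$ is positively homogeneous of degree $1$ while $a_j$ and $b_j$ are homogeneous of degree $0$, I obtain
\begin{align*}
r^{(d+1)/2}\widehat{\chi}_{\Omega}(rn)
&=\sum_{j=0}^{h}r^{-j}|n|^{-(d+1)/2-j}\Bigl[a_j(n)e^{-2\pi ig(n)r}+b_j(n)e^{2\pi ig(-n)r}\Bigr]\\
&\quad+\mathcal{O}\bigl(r^{-h-1}|n|^{-(d+2h+3)/2}\bigr),
\end{align*}
the error exponents coming from $r^{(d+1)/2}\cdot(r|n|)^{-(d+2h+3)/2}=r^{-h-1}|n|^{-(d+2h+3)/2}$. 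The leading terms on the right are exactly the $n$-th Fourier coefficients of $\sum_{j=0}^{h}\Phi_j((d+1)/2,r,x)$, so the Fourier coefficients of the $L^2$ function $\mathcal{R}_h(r,\cdot)$ are bounded in absolute value by $Cr^{-h-1}|n|^{-(d+2h+3)/2}$, uniformly in $r\geq 1$.

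The final step is to upgrade this coefficient-level bound to a pointwise one, which is where the hypothesis $h>(d-3)/2$ enters: it forces $(d+2h+3)/2>d$ and hence $\sum_{n\neq 0}|n|^{-(d+2h+3)/2}<+\infty$. The Fourier series of $\mathcal{R}_h(r,\cdot)$ is then absolutely and uniformly convergent by the Weierstrass $M$-test, agrees almost everywhere with $\mathcal{R}_h(r,\cdot)$ viewed as an $L^2$ function, and is dominated by $Cr^{-h-1}\sum_{n\neq 0}|n|^{-(d+2h+3)/2}\leq C'r^{-h-1}$, as claimed. The only delicate point is notational: the individual summands $\Phi_j((d+1)/2,r,x)$ for small $j$ may well fail to converge absolutely and must be interpreted in $L^2(\mathbb{T}^d)$ as in part (1), which is legitimate precisely because $(d+1)/2+j>d/2$ for every $j\geq 0$.
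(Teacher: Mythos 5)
Your proposal is correct and takes essentially the same route as the paper: substitute $\xi = rn$ into the asymptotic expansion of $\widehat{\chi}_{\Omega}$ from Lemma \ref{Fourier Asymptotic copy(1)}, use the homogeneities of $g$, $a_j$, $b_j$ to match Fourier coefficients with those of $\Phi_j((d+1)/2,r,x)$, and observe that the condition $h>(d-3)/2$ makes the remainder's Fourier series absolutely and uniformly summable with the stated $r^{-h-1}$ decay. The paper's proof is only a brief sketch to this effect, and you correctly supply the details, including the caveat (consistent with Remark \ref{r1}) that the pointwise bound on $\mathcal{R}_h$ should be read for its continuous representative, the individual $\Phi_j$'s being understood in $L^2(\mathbb{T}^d)$.
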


\begin{proof}
This is a consequence of the previous lemmas. The terms $\Phi
_{j}\left(  \left(  d+1\right)  /2,r,x\right)  $ come from the terms
homogeneous of degree $-\left(  d+1\right)  /2-j$ in the asymptotic expansion
of the Fourier transform of $\Omega$, while the remainder $\mathcal{R}%
_{h}\left(  r,x\right)  $ is given by an absolutely and uniformly convergent
Fourier expansion.
\end{proof}

 After these preliminary lemmas and with the above notation, we have that the
 normalized discrepancy has the asymptotic expansion
\[
r^{-\left(  d-1\right)  /2}\mathcal{D}%
\left(  r\Omega-x\right)=%
{\displaystyle\sum_{j=0}^{h}}
\Phi_{j}\left(  \left(  d+1\right)  /2,r,x\right)+O(r^{-h-1})  .
\]
 
Now one can describe the strategy of the proof of Theorem \ref{1} as follows: The terms that appear in the expansion of the
normalized discrepancy have the form
\[%
r^{-j}{\displaystyle\sum_{n\in\mathbb{Z}^{d}\setminus\left\{  0\right\}  }}
c(n)\left\vert n\right\vert ^{-\left(  d+1\right)  /2-j}\exp\left(  \pm2\pi
ig\left(  \mp n\right)  r\right)  \exp\left(  2\pi inx\right)  .
\]

One can replace the real parameter $\left(  d+1\right)  /2$ which describes
the decay of the Fourier transform by a complex parameter $z$, and define the
function
\[
\Theta_j\left(  z,r,x\right)  =r^{-j}%
{\displaystyle\sum_{n\in\mathbb{Z}^{d}\setminus\left\{  0\right\}  }}
c(n)\left\vert n\right\vert ^{-z-j}\exp\left(  \pm2\pi ig\left(  \mp n\right)
r\right)  \exp\left(  2\pi inx\right)  .
\]
Observe that the Fourier coefficients of this function are analytic functions
of the complex variable $z$. One can estimate the $L^{2}\left(  L^{2}\right)
$ norm of this function via the Parseval equality, and it can be easily seen that the norm is finite if
$\operatorname{Re}\left(  z\right)  >d/2$. Then one can estimate the
$L^{p}\left(  L^{2}\right)  $ norm with $p\geq4$ via the Hausdorff Young
inequality, and it turns out that  if $\operatorname{Re}\left(  z\right)
>d\left(  1-1/p\right)  -1/2$ then this norm is finite. Finally, the result for $z=\left(  d+1\right)
/2$, the one for the discrepancy, follows by complex interpolation.

\begin{lemma}
\label{Support} Let $g\left(  x\right)  =\sup_{y\in\Omega}\left\{  x\cdot
y\right\}  $ be the support function of a convex $\Omega$ which contains the
origin, and with a smooth boundary and everywhere positive Gaussian curvature.

(1) This support function is convex, homogeneous of degree one, positive and
smooth away from the origin, and it is equivalent to the Euclidean norm, that
is there exist $0<A<B$ such that for every $x$,
\[
A\left\vert x\right\vert \leq g\left(  x\right)  \leq B\left\vert x\right\vert
.
\]

(2) There exists $C>0$ such that for all unit vectors $\omega$ and $\vartheta$
in $\mathbb{R}^{d}$, there exists a number $A\left(  \vartheta,\omega\right)
$ such that for every real $\tau$ one has
\[
\left\vert g\left(  \vartheta-\tau\omega\right)  -g\left(  \vartheta\right)
\right\vert \geq C\frac{\left\vert \tau\right\vert \left\vert \tau-A\left(
\vartheta,\omega\right)  \right\vert }{1+\left\vert \tau\right\vert }.
\]
\end{lemma}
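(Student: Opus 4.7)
Part (1) collects standard facts. The function $g$ is convex as a pointwise supremum of the linear functions $x\mapsto x\cdot y$, and positively $1$-homogeneous from the scaling of the dot product. Assuming $\Omega$ is a convex body with the origin in its interior, there exist radii $0<a<b$ with $\{|y|\leq a\}\subseteq\Omega\subseteq\{|y|\leq b\}$, and this sandwich yields directly $a|x|\leq g(x)\leq b|x|$. Smoothness of $g$ on $\mathbb{R}^{d}\setminus\{0\}$ follows from the classical equivalence between everywhere positive Gaussian curvature of $\partial\Omega$, the Gauss map $\partial\Omega\to S^{d-1}$ being a $C^{\infty}$ diffeomorphism, and smoothness of the support function.

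For Part (2), fix unit vectors $\vartheta,\omega$ and set $f(\tau)=g(\vartheta-\tau\omega)-g(\vartheta)$. This is convex in $\tau$ with $f(0)=0$, and by Part (1), $f(\tau)\geq a|\vartheta-\tau\omega|-g(\vartheta)\to+\infty$ as $|\tau|\to\infty$. The sublevel set $\{f\leq 0\}$ is therefore a compact interval having $0$ as an endpoint; let $A=A(\vartheta,\omega)$ denote the other endpoint (possibly $0$). Since $f$ and the quadratic $\tau\mapsto\tau(\tau-A)$ share sign throughout $\mathbb{R}$, one may factor
\[
f(\tau)=\tau(\tau-A)\,h(\tau),
\]
with $h$ continuous and nonnegative; the claimed inequality is equivalent to the uniform bound $(1+|\tau|)\,h(\tau)\geq C$ over $(\vartheta,\omega)\in S^{d-1}\times S^{d-1}$ and $\tau\in\mathbb{R}$.

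The plan is to split into two regimes. For $|\tau|$ large, $1$-homogeneity gives $f(\tau)/|\tau|\to g(-\mathrm{sgn}(\tau)\omega)\geq a$ uniformly in $\vartheta$, so $(1+|\tau|)\,h(\tau)\to g(-\mathrm{sgn}(\tau)\omega)\geq a$, yielding some $M$ depending only on $\Omega$ with $(1+|\tau|)\,h(\tau)\geq a/2$ for $|\tau|\geq M$. For $|\tau|\leq M$, $(1+|\tau|)\,h(\tau)$ is continuous on the compact set $S^{d-1}\times S^{d-1}\times[-M,M]$, so it suffices to verify strict positivity pointwise. Off the zero set this is immediate; at a simple zero $\tau_0\in\{0,A\}$ with $A\neq 0$, $h(\tau_0)=\pm f'(\tau_0)/A>0$ by convexity and the sign configuration; at a double zero $A=0$, one has $h(0)=f''(0)/2=\tfrac12\,\omega^{T}\mathrm{Hess}(g)(\vartheta)\,\omega$, and positive Gaussian curvature is precisely the statement that $\mathrm{Hess}(g)(\vartheta)$ is positive definite on $\vartheta^{\perp}$ uniformly in $\vartheta\in S^{d-1}$. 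The condition $A=0$ forces $\omega$ to be tangent to the level set $\{g=g(\vartheta)\}$ at $\vartheta$, i.e.\ $\omega\cdot\nabla g(\vartheta)=0$; since $\vartheta\cdot\nabla g(\vartheta)=g(\vartheta)\geq a>0$ by Euler, such $\omega$ is bounded away from $\pm\vartheta$, whence $f''(0)$ is bounded below uniformly.

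The main obstacle is the degenerate locus $\omega=\pm\vartheta$, where $\vartheta-\tau\omega$ hits the origin at $\tau=\pm 1$ and $f$ acquires a kink, invalidating the Hessian argument. I would dispatch this locus by direct computation: here $f(\tau)=|1\mp\tau|\,g(\pm\vartheta)-g(\vartheta)$ is piecewise linear with $A=1+g(\vartheta)/g(\mp\vartheta)\geq 1+a/b$, and the inequality follows from explicit piecewise expressions for $|f|$ and $h$, with a constant depending only on $a$ and $b$. Off this locus the continuity of $A(\vartheta,\omega)$, obtained from the implicit function theorem applied to $g(\vartheta-\tau\omega)=g(\vartheta)$ at the simple root $\tau=A$, matches the two regimes via an approximation argument and yields a single constant $C$ valid over $S^{d-1}\times S^{d-1}\times\mathbb{R}$.
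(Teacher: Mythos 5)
Your argument is correct in outline, but it takes a genuinely different route from the paper's, and the comparison is instructive. The paper squares the support function: since $\left\vert g(\vartheta-\tau\omega)-g(\vartheta)\right\vert = \left\vert g(\vartheta-\tau\omega)^{2}-g(\vartheta)^{2}\right\vert /(g(\vartheta-\tau\omega)+g(\vartheta))$ and the denominator is comparable to $1+|\tau|$ by Part~(1), the whole claim reduces to $|F(\tau)|\geq C|\tau||\tau-A|$ for $F(\tau)=g(\vartheta-\tau\omega)^{2}-g(\vartheta)^{2}$. The point of squaring is that $F''=2(\nabla g\cdot\omega)^{2}+2g\,\omega^{T}\nabla^{2}g\,\omega$ has a uniform positive lower bound in all three variables at once: the $1/|x|$-decay of $\nabla^{2}g$ is cancelled by the $|x|$-growth of $g$, and one of the two nonnegative summands is always bounded away from zero. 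The Newton/Lagrange remainder $F(\tau)=\tau(\tau-A)\cdot\tfrac12 F''(\xi)$ then finishes in one stroke, with no compactness argument, no case split over $|\tau|$, and no separate treatment of the degenerate directions. You instead keep $g$ unsquared, factor $f(\tau)=\tau(\tau-A)h(\tau)$, and must assemble the bound $(1+|\tau|)h\geq C$ piecewise: homogeneity for $|\tau|$ large, compactness and pointwise positivity on a bounded slab, and a hand computation on the kink locus $\omega=\pm\vartheta$. The pointwise verification is fine; in particular your treatment of the double zero $A=0$ (using that $\omega\perp\nabla g(\vartheta)$ together with $\vartheta\cdot\nabla g(\vartheta)=g(\vartheta)>0$ forces a uniformly large component of $\omega$ in $\vartheta^{\perp}$, on which $\nabla^{2}g(\vartheta)$ is positive definite) is exactly right, and you correctly identify the radial direction $\vartheta$ as the null direction of $\nabla^{2}g(\vartheta)$, whereas the paper's text has a small slip and names $\nabla g(\vartheta)$ there. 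The price of staying with $g$ is that $f''=\omega^{T}\nabla^{2}g(\vartheta-\tau\omega)\,\omega$ genuinely blows up as $\vartheta-\tau\omega\to 0$, so the joint continuity of $h$ across the kink locus and the final ``approximation argument'' gluing $\omega=\pm\vartheta$ to nearby directions require real care; you leave them sketched, and those are precisely the two places where the paper's squaring trick makes the difficulty disappear.
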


\begin{proof}
When $\Omega$ is the sphere $\{|x|\leq 1\}$ the proof is explicit and transparent. 
Indeed, one has $g(x)=|x|$ and, if $|\vartheta|=|\omega|=1$, then 
\[
|\vartheta-\tau\omega|-|\vartheta|=\frac{|\vartheta-\tau\omega|^2-|\vartheta|^2}{|\vartheta-\tau\omega|+|\vartheta|}=\frac{\tau(\tau-2\vartheta\cdot\omega)}{1+|\vartheta-\tau\omega|}.
\] 
The proof for a generic convex set is a bit more involved.
The convexity of the support function  easily follows from the convexity of
$\Omega$, and also the other properties in (1) are elementary. In order to prove (2),
observe that for $\left\vert \omega\right\vert =\left\vert \vartheta
\right\vert =1$ and $-\infty<\tau<+\infty$,
\begin{equation*}
 \left\vert g\left(  \vartheta-\tau\omega\right)  -g\left(  \vartheta
\right)  \right\vert =\left\vert \frac{g\left(  \vartheta-\tau\omega\right)
^{2}-g\left(  \vartheta\right)  ^{2}}{g\left(  \vartheta-\tau\omega\right)
+g\left(  \vartheta\right)  }\right\vert 
 \geq C\frac{\left\vert g\left(  \vartheta-\tau\omega\right)  ^{2}-g\left(
\vartheta\right)  ^{2}\right\vert }{1+\left\vert \tau\right\vert }.
\end{equation*}
It then suffices to prove that there exists a $C>0$ such that for all unit
vectors $\vartheta$ and $\omega$, there exists a number $A\left(
\vartheta,\omega\right)  $ such that for every real $\tau$ one has
\[
\left\vert g\left(  \vartheta-\tau\omega\right)  ^{2}-g\left(  \vartheta
\right)  ^{2}\right\vert \geq C\left\vert \tau\right\vert \left\vert
\tau-A\left(  \vartheta,\omega\right)  \right\vert .
\]
Let us show that the function $f\left(  \tau\right)  =g\left(  \vartheta
-\tau\omega\right)  ^{2}-g\left(  \vartheta\right)  ^{2}$ is strictly convex.
If $\omega=\pm\vartheta$, then
\[
f\left(  \tau\right)  =g\left(  \left(  1\pm\tau\right)  \vartheta\right)
^{2}-g\left(  \vartheta\right)  ^{2}=\left(  \left(  1\pm\tau\right)
^{2}-1\right)  g\left(  \vartheta\right)  ^{2}=\left(  \tau^{2}\pm
2\tau\right)  g\left(  \vartheta\right)  ^{2}.
\]
Therefore, if $\omega=\pm\vartheta$,
\[
\dfrac{d^{2}}{d\tau^{2}}f\left(  \tau\right)  =2g\left(  \vartheta\right)
^{2}\geq2C>0.
\]
If $\omega\neq\pm\vartheta$, then $\vartheta-\tau\omega\neq0$, and
\[
\dfrac{d}{d\tau}f\left(  \tau\right)  =-2g\left(  \vartheta-\tau\omega\right)
\nabla g\left(  \vartheta-\tau\omega\right)  \cdot\omega,
\]%
\[
\dfrac{d^{2}}{d\tau^{2}}f\left(  \tau\right)  =2\left(  \nabla g\left(
\vartheta-\tau\omega\right)  \cdot\omega\right)  ^{2}+2g\left(  \vartheta
-\tau\omega\right)  \omega^{t}\cdot\nabla^{2}g\left(  \vartheta-\tau
\omega\right)  \cdot\omega.
\]
For notational simplicity call $\vartheta-\tau\omega=x$. The Hessian matrix
$\nabla^{2}g\left(  x\right)  $ is homogeneous of degree $-1$ and positive
semidefinite. When $\left\vert x\right\vert =1$ one eigenvalue is $0$ and the
associated eigenvector is the gradient $\nabla g\left(  x\right)  $, while all
the other eigenvalues are the reciprocal of the principal curvatures at the
point where the normal is $\nabla g\left(  x\right)  $. See \cite[Corollary
2.5.2]{Schneider}. Let $\alpha$ be the minimum of $g\left(  x\right)  $ on the
sphere $\left\{  \left\vert x\right\vert =1\right\}  $, let $\beta>0$ be the
minimum of $\left\vert \nabla g\left(  x\right)  \right\vert $ on $\left\{
\left\vert x\right\vert =1\right\}  $, and let $\gamma>0$ be the minimum of
the non zero eigenvalues of $\nabla^{2}g\left(  x\right)  $ on $\left\{
\left\vert x\right\vert =1\right\}  $. If one decomposes $\omega$ into
$\omega_{0}+\omega_{1}$, where $\omega_{0}$ is parallel to $\nabla g\left(
x\right)  $ and $\omega_{1}$ is orthogonal to $\nabla g\left(  x\right)  $,
then
\begin{align*}
&  \dfrac{d^{2}}{d\tau^{2}}f\left(  \tau\right)  =2\left(  \nabla g\left(
x\right)  \cdot\omega_{0}\right)  ^{2}+2g\left(  x\right)  \omega_{1}^{t}%
\cdot\nabla^{2}g\left(  x\right)  \cdot\omega_{1}\\
&  =2\left\vert \nabla g\left(  x\right)  \right\vert ^{2}\left\vert
\omega_{0}\right\vert ^{2}+2g\left(  x/\left\vert x\right\vert \right)
\omega_{1}^{t}\cdot\nabla^{2}g\left(  x/\left\vert x\right\vert \right)
\cdot\omega_{1}\\
&  \geq2\beta^{2}\left\vert \omega_{0}\right\vert ^{2}+2\alpha\gamma\left\vert
\omega_{1}\right\vert ^{2}\geq2C>0.
\end{align*}
Therefore, for every $\vartheta$ and $\omega$ the function $f\left(
\tau\right)  $ is strictly convex, and it has exactly two zeros, one is
$\tau=0$ and the other is $\tau=A\left(  \vartheta,\omega\right)  $, possibly
the zero is double. By the Lagrange remainder in interpolation, there exists
$\varepsilon$ such that
\[
f\left(  \tau\right)  =\tau\left(  \tau-A\left(  \vartheta,\omega\right)
\right)  \dfrac{1}{2}\dfrac{d^{2}f}{d\tau^{2}}\left(  \varepsilon\right)  .
\]
 And since $d^{2}f\left(  \tau\right)  /d\tau^{2}\geq2C>0$,
\[
\left\vert g\left(  \vartheta-\tau\omega\right)  ^{2}-g\left(  \vartheta
\right)  ^{2}\right\vert \geq C\left\vert \tau\right\vert
\left\vert \tau-A\left(  \vartheta,\omega\right)  \right\vert .
\]
\end{proof}

\begin{lemma}
\label{Integral} If $g\left(  x\right)  $ is the support function of $\Omega$,
then for every $\left(  d+1\right)  /2\leq\alpha<d$ and $\beta\geq0$, there
exists $C$\ such that for every $y\in\mathbb{R}^{d}-\left\{  0\right\}  $,
\begin{align*}
&
{\displaystyle\int_{\mathbb{R}^{d}}}
\left\vert x\right\vert ^{-\alpha}\left\vert x-y\right\vert ^{-\alpha}\left(
1+\left\vert g\left(  x\right)  -g\left(  x-y\right)  \right\vert \right)
^{-\beta}dx\\
&  \leq\left\{
\begin{array}
[c]{ll}%
C\left\vert y\right\vert ^{d-2\alpha-\beta} & \text{if }0\leq\beta<1\text{,}\\
C\left\vert y\right\vert ^{d-2\alpha-1}\log\left(  2+\left\vert y\right\vert
\right)  & \text{if }\beta=1\text{,}\\
C\left\vert y\right\vert ^{d-2\alpha-1} & \text{if }\beta>1\text{.}%
\end{array}
\right.
\end{align*}

\end{lemma}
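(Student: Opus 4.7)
The plan is to rescale by $|y|$ to normalize the scale, apply a Cavalieri (layer-cake) decomposition to reduce to a single geometric measure estimate on the slab $\{|\phi(u)|\le s\}$, where $\phi(u):=g(u)-g(u-\omega)$, and then deduce this bound from Lemma \ref{Support} combined with the structure of the critical set of $\phi$.

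\emph{Step 1 (rescaling).} Setting $y=R\omega$ with $R=|y|$, $|\omega|=1$, and substituting $x=Ru$, the homogeneity $g(Rz)=Rg(z)$ gives
\[
\int_{\mathbb{R}^d}|x|^{-\alpha}|x-y|^{-\alpha}\bigl(1+|g(x)-g(x-y)|\bigr)^{-\beta}dx = R^{d-2\alpha}\,I(R,\omega),
\]
where $I(R,\omega):=\int_{\mathbb{R}^d}|u|^{-\alpha}|u-\omega|^{-\alpha}\bigl(1+R|\phi(u)|\bigr)^{-\beta}du$. It suffices to show, uniformly in unit $\omega$ and $R\ge 1$, that $I(R,\omega)$ is bounded by $R^{-\beta}$, $R^{-1}\log(2+R)$, or $R^{-1}$ in the three regimes. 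For $R\le 1$ the claim is automatic: the trivial bound $I(R,\omega)\le\int|u|^{-\alpha}|u-\omega|^{-\alpha}du$ is finite (Riesz composition, valid for $d/2<\alpha<d$), and $R^{d-2\alpha}$ already dominates the three target rates.

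\emph{Step 2 (layer cake).} The identity $(1+R|\phi|)^{-\beta}=\beta R\int_0^\infty(1+Rs)^{-\beta-1}\chi_{|\phi|\le s}\,ds$ and Fubini give
\[
I(R,\omega)=\beta R\int_0^\infty (1+Rs)^{-\beta-1}\,M(s)\,ds,\qquad M(s):=\int_{\{|\phi(u)|\le s\}}|u|^{-\alpha}|u-\omega|^{-\alpha}\,du.
\]
Granted the key estimate $M(s)\le C\min(s,1)$ uniformly in $\omega$ (Step 3), the substitution $t=Rs$ evaluates the integrals elementarily: $\int_0^\infty t(1+t)^{-\beta-1}dt<\infty$ yields $I=O(R^{-1})$ for $\beta>1$; its logarithmic divergence at $\beta=1$ gives $I=O(R^{-1}\log R)$; and the behaviour $\int_0^R t(1+t)^{-\beta-1}dt\sim R^{1-\beta}$, together with the tail $s\ge 1$ where $M(s)$ is bounded, yields $I=O(R^{-\beta})$ for $\beta<1$.

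\emph{Step 3 (geometric estimate $M(s)\le Cs$; the main obstacle).} Two facts drive this. First, the critical points of $\phi$, i.e.\ $\{u:\nabla g(u)=\nabla g(u-\omega)\}$, lie entirely on the line $\mathbb{R}\omega\setminus\{0,\omega\}$: $\nabla g$ is radially constant and its restriction to $S^{d-1}$ is a diffeomorphism onto $\partial\Omega$ under the positive-curvature hypothesis, so $\nabla g(u)=\nabla g(u-\omega)$ forces $u$ parallel to $u-\omega$. A direct computation on the critical line yields the critical values $g(\omega)$ and $-g(-\omega)$, both bounded away from $0$ by Lemma \ref{Support}(1). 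Hence $0$ is a regular value of $\phi$, $\{\phi=0\}$ is a smooth hypersurface, and it stays away from the singular points $u=0,\omega$ of the integrand. Second, Lemma \ref{Support} applied in polar coordinates $u=t\psi$ (using $g(u)-g(u-\omega)=t(g(\psi)-g(\psi-\omega/t))$) yields
\[
|\phi(u)|\ge C\,\frac{|1-A(\psi,\omega)\,t|}{1+t}.
\]
Splitting $S^{d-1}$ according to whether $|A(\psi,\omega)|\ge \sqrt{s}$ (where the admissible $t$ forms a short interval of length $O(s/A^2)$ around $1/A$) or $|A(\psi,\omega)|< \sqrt{s}$ (a tubular neighborhood of $(d-1)$-measure $O(s)$ of the smooth $(d-2)$-submanifold $\{\psi\in S^{d-1}:\nabla g(\psi)\cdot\omega=0\}$, on which the admissible $t$ extends to infinity but the integrand decays like $t^{d-1-2\alpha}$), and integrating $|u|^{-\alpha}|u-\omega|^{-\alpha}$ in each piece, one obtains $M(s)\le Cs$ uniformly in $\omega$. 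The decisive point is the behaviour at $|u|\to\infty$: there $\{\phi=0\}$ is asymptotic to the infinite cone over $\{\nabla g(\psi)\cdot\omega=0\}$, and the hypothesis $\alpha\ge(d+1)/2$ (equivalently $2\alpha-d\ge 1$) is precisely what makes $\int^{\infty}t^{d-1-2\alpha}dt$ finite, and thus the weighted $(d-1)$-volume of this asymptotic cone finite.
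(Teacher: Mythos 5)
Your proof is correct in overall structure and uses the same ingredients as the paper (rescaling by $|y|$, Lemma~\ref{Support}(2), the inversion $\rho\mapsto 1/\rho$, and the exponent condition $2\alpha-d-1\ge 0$), but it organizes them differently. The paper inverts first, bounds the weight $\tau^{2\alpha-d-1}$ by the constant $\varepsilon^{d+1-2\alpha}$ on $(0,1/\varepsilon]$, and then reduces everything to the one-dimensional integral $\int(1+|y|\,|\tau-A(\vartheta,\omega)|)^{-\beta}\,d\tau$, which it evaluates directly in the three regimes $\beta<1$, $\beta=1$, $\beta>1$. You instead pass through the distribution function: layer-cake trades $(1+R|\phi|)^{-\beta}$ for the sublevel-set weight $M(s)$, and the three regimes fall out of the elementary $t$-integral once $M(s)\le C\min(s,1)$ is known. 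These are dual formulations of the same computation; the paper's is a bit more direct (it never needs the coarea picture, critical points of $\phi$, or the asymptotic cone), while yours isolates a clean geometric statement ($M(s)\le C\min(s,1)$) that is arguably more illuminating.

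Two quantitative remarks on your Step~3. First, the set $\{\psi\in S^{d-1}: |A(\psi,\omega)|<\sqrt{s}\}$ has $(d-1)$-measure $O(\sqrt{s})$, not $O(s)$: $A(\psi,\omega)$ vanishes to \emph{first} order on the codimension-one submanifold $\{\nabla g(\psi)\cdot\omega=0\}$ (it is the second root of the strictly convex $f(\tau)=g(\psi-\tau\omega)^2-g(\psi)^2$, hence comparable to $f'(0)\propto\nabla g(\psi)\cdot\omega$), so a sublevel set of height $\sqrt{s}$ is a tube of width $\sim\sqrt{s}$. With the corrected measure $O(\sqrt{s})$ your tally still closes, and in fact this is exactly where $\alpha\ge(d+1)/2$ earns its keep: the $t$-integral over $[c/\sqrt{s},\infty)$ contributes $O(s^{\alpha-d/2})$, and $O(\sqrt{s})\cdot O(s^{\alpha-d/2})\le Cs$ requires $\alpha\ge(d+1)/2$. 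Second, your closing sentence overstates the role of the hypothesis: $\int^\infty t^{d-1-2\alpha}\,dt<\infty$ only needs $\alpha>d/2$, and the heuristic \textquotedblleft weighted $(d-1)$-volume of the asymptotic cone\textquotedblright\ is finite for $\alpha>(d-1)/2$; the stronger $\alpha\ge(d+1)/2$ is used to make the argument painless (after inversion, $\sigma^{2\alpha-d-1}$ is simply bounded on $(0,1/2]$, and $\{|\sigma-A|\lesssim s\}$ has length $O(s)$), exactly as in the paper. If you state Step~3 through that inversion, the $|A|\gtrless\sqrt{s}$ split becomes unnecessary and both inaccuracies disappear.

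Finally, note that the layer-cake identity requires $\beta>0$; for $\beta=0$ the claim is the Riesz composition bound $\int|u|^{-\alpha}|u-\omega|^{-\alpha}\,du<\infty$, which you already cite in Step~1, so this is not a gap, only worth a sentence.
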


\begin{proof}
Let us explain the numerology behind the lemma. If there is no
cutoff $\left(  1+\left\vert g\left(  x\right)  -g\left(  x-y\right)
\right\vert \right)  ^{-\beta}$, then the change of variables $x=\left\vert
y\right\vert z$ and $y=\left\vert y\right\vert \omega$ gives
\[%
{\displaystyle\int_{\mathbb{R}^{d}}}
\left\vert x\right\vert ^{-\alpha}\left\vert x-y\right\vert ^{-\alpha
}dx=\left\vert y\right\vert ^{d-2\alpha}%
{\displaystyle\int_{\mathbb{R}^{d}}}
\left\vert z\right\vert ^{-\alpha}\left\vert z-\omega\right\vert ^{-\alpha
}dx=C\left\vert y\right\vert ^{d-2\alpha}.
\]
On the other hand, the cutoff $\left(  1+\left\vert g\left(  x\right)
-g\left(  x-y\right)  \right\vert \right)  ^{-\beta}$ gives an extra
decay. In particular, the integral with the cutoff $\left(  1+\left\vert
g\left(  x\right)  -g\left(  x-y\right)  \right\vert \right)  ^{-\beta}$ with
$\beta$ large is essentially over the set $\left\{  g\left(  x\right)
=g\left(  x-y\right)  \right\}  $, that is the cutoff reduces the space
dimension by $1$. This suggests that, at least when $\beta$ is large, the
integral with the cutoff can be seen as the convolution in $\mathbb{R}^{d-1}$
of two homogeneous functions of degree $-\alpha$, and this gives the decay
$\left\vert y\right\vert ^{d-1-2\alpha}$. Hence, when $\beta=0$ the decay is
$\left\vert y\right\vert ^{d-2\alpha}$, and when $\beta>1$ the decay is
$\left\vert y\right\vert ^{d-1-2\alpha}$. Finally, by interpolation, when $0<\beta<1$
the decay is $\left\vert y\right\vert ^{d-\beta-2\alpha}$. This is just the
numerology, the details of the proof are more delicate. The change of
variables $x=\left\vert y\right\vert z$ and $y=\left\vert y\right\vert \omega$
gives
\begin{align*}
&
{\displaystyle\int_{\mathbb{R}^{d}}}
\left\vert x\right\vert ^{-\alpha}\left\vert x-y\right\vert ^{-\alpha}\left(
1+\left\vert g\left(  x\right)  -g\left(  x-y\right)  \right\vert \right)
^{-\beta}dx\\
&  =\left\vert y\right\vert ^{d-2\alpha}%
{\displaystyle\int_{\mathbb{R}^{d}}}
\left\vert z\right\vert ^{-\alpha}\left\vert z-\omega\right\vert ^{-\alpha
}\left(  1+\left\vert y\right\vert \left\vert g\left(  z\right)  -g\left(
z-\omega\right)  \right\vert \right)  ^{-\beta}dz.
\end{align*}
If $\varepsilon$ is positive and suitably small, there exists $\delta>0$ such
that for every $\omega$ and $z$ with $\left\vert \omega\right\vert =1$ and
$\left\vert z\right\vert <\varepsilon$ one has $g\left(  z-\omega\right)
-g\left(  z\right)  >\delta$. This suggests to split the domain of integration into
\[
\left\{  \left\vert z\right\vert \leq\varepsilon\right\}  \cup\left\{
\left\vert z-\omega\right\vert \leq\varepsilon\right\}  \cup\left\{
\left\vert z\right\vert \geq\varepsilon,\ \left\vert z-\omega\right\vert
\geq\varepsilon\right\}  .
\]
The integral over  $\left\{  \left\vert z\right\vert \leq
\varepsilon\right\}  $ is bounded by
\begin{align*}
&
{\displaystyle\int_{\left\{  \left\vert z\right\vert \leq\varepsilon\right\}
}}
\left\vert z\right\vert ^{-\alpha}\left\vert z-\omega\right\vert ^{-\alpha
}\left(  1+\left\vert y\right\vert \left\vert g\left(  z\right)  -g\left(
z-\omega\right)  \right\vert \right)  ^{-\beta}dz\\
&  \leq\left(  1-\varepsilon\right)  ^{-\alpha}\left(  1+\delta\left\vert
y\right\vert \right)  ^{-\beta}%
{\displaystyle\int_{\left\{  \left\vert z\right\vert \leq\varepsilon\right\}
}}
\left\vert z\right\vert ^{-\alpha}dz\\
&  \leq C\left(  1+\left\vert y\right\vert \right)  ^{-\beta}.
\end{align*}
The integral over $\left\{  \left\vert z-\omega\right\vert
\leq\varepsilon\right\}  $ is bounded similarly,
\[%
{\displaystyle\int_{\left\{  \left\vert z-\omega\right\vert \leq
\varepsilon\right\}  }}
\left\vert z\right\vert ^{-\alpha}\left\vert z-\omega\right\vert ^{-\alpha
}\left(  1+\left\vert y\right\vert \left\vert g\left(  z\right)  -g\left(
z-\omega\right)  \right\vert \right)  ^{-\beta}dz\leq C\left(  1+\left\vert
y\right\vert \right)  ^{-\beta}.
\]
It remains to estimate the integral over $\left\{  \left\vert z\right\vert
\geq\varepsilon,\ \left\vert z-\omega\right\vert \geq\varepsilon\right\}  $.
First observe that
\begin{align*}
&
{\displaystyle\int_{\left\{  \left\vert z\right\vert \geq\varepsilon
,\ \left\vert z-\omega\right\vert \geq\varepsilon\right\}  }}
\left\vert z\right\vert ^{-\alpha}\left\vert z-\omega\right\vert ^{-\alpha
}\left(  1+\left\vert y\right\vert \left\vert g\left(  z\right)  -g\left(
z-\omega\right)  \right\vert \right)  ^{-\beta}dz\\
&  \leq C%
{\displaystyle\int_{\left\{  \left\vert z\right\vert \geq\varepsilon\right\}
}}
\left\vert z\right\vert ^{-2\alpha}\left(  1+\left\vert y\right\vert
\left\vert g\left(  z\right)  -g\left(  z-\omega\right)  \right\vert \right)
^{-\beta}dz.
\end{align*}
In spherical coordinates write $z=\rho\vartheta$, with $\varepsilon\leq
\rho<+\infty$ and $\left\vert \vartheta\right\vert =1$, and $dz=\rho
^{d-1}d\rho d\vartheta$, with $d\vartheta$ the surface measure on the $d-1$
dimensional sphere $\mathbb{S}^{d-1}=\left\{  \left\vert \vartheta\right\vert
=1\right\}  $. Then by the above lemma and the change of variables
$\rho=1/\tau$, recalling that $\left\vert \omega\right\vert =1$ and
$2\alpha-d-1\geq0$,
\begin{align*}
&
{\displaystyle\int_{\left\{  \left\vert z\right\vert \geq\varepsilon\right\}
}}
\left\vert z\right\vert ^{-2\alpha}\left(  1+\left\vert y\right\vert
\left\vert g\left(  z\right)  -g\left(  z-\omega\right)  \right\vert \right)
^{-\beta}dz\\
&  =%
{\displaystyle\int_{\mathbb{S}^{d-1}}}
{\displaystyle\int_{\varepsilon}^{+\infty}}
\rho^{d-1-2\alpha}\left(  1+\left\vert y\right\vert \left\vert g\left(
\rho\vartheta\right)  -g\left(  \rho\vartheta-\omega\right)  \right\vert
\right)  ^{-\beta}d\rho d\vartheta\\
&  =%
{\displaystyle\int_{\mathbb{S}^{d-1}}}
{\displaystyle\int_{0}^{1/\varepsilon}}
\tau^{2\alpha-d-1}\left(  1+\left\vert y\right\vert \left\vert \dfrac{g\left(
\vartheta\right)  -g\left(  \vartheta-\tau\omega\right)  }{\tau}\right\vert
\right)  ^{-\beta}d\tau d\vartheta\\
&  \leq \varepsilon^{d+1-2\alpha}%
{\displaystyle\int_{\mathbb{S}^{d-1}}}
{\displaystyle\int_{0}^{1/\varepsilon}}
\left(  1+\left\vert y\right\vert \left\vert \dfrac{g\left(  \vartheta\right)
-g\left(  \vartheta-\tau\omega\right)  }{\tau}\right\vert \right)  ^{-\beta
}d\tau d\vartheta\\
&  \leq C%
{\displaystyle\int_{\mathbb{S}^{d-1}}}
{\displaystyle\int_{0}^{1/\varepsilon}}
\left(  1+\left\vert y\right\vert \left\vert \tau-A\left(  \vartheta
,\omega\right)  \right\vert \right)  ^{-\beta}d\tau d\vartheta.
\end{align*}
Finally, if $\sup_{\left\vert \vartheta\right\vert =\left\vert \omega
\right\vert =1}\left\{  \left\vert A\left(  \vartheta,\omega\right)
\right\vert \right\}  =\gamma$, then
\begin{align*}
&
{\displaystyle\int_{\mathbb{S}^{d-1}}}
{\displaystyle\int_{0}^{1/\varepsilon}}
\left(  1+\left\vert y\right\vert \left\vert \tau-A\left(  \vartheta
,\omega\right)  \right\vert \right)  ^{-\beta}d\tau d\vartheta\\
&  \leq\left\vert \mathbb{S}^{d-1}\right\vert
{\displaystyle\int_{-\gamma-1/\varepsilon}^{\gamma+1/\varepsilon}}
\left(  1+\left\vert y\right\vert \left\vert \tau\right\vert \right)
^{-\beta}d\tau\\
&  =\left\vert \mathbb{S}^{d-1}\right\vert \left\vert y\right\vert ^{-1}%
{\displaystyle\int_{-\left(  \gamma+1/\varepsilon\right)  \left\vert
y\right\vert }^{\left(  \gamma+1/\varepsilon\right)  \left\vert y\right\vert
}}
\left(  1+\left\vert \tau\right\vert \right)  ^{-\beta}d\tau\\
&  \leq\left\{
\begin{array}
[c]{ll}%
C\left(  1+\left\vert y\right\vert \right)  ^{-\beta} & \text{if }0\leq
\beta<1\text{,}\\
C\left(  1+\left\vert y\right\vert \right)  ^{-1}\log\left(  2+\left\vert
y\right\vert \right)  & \text{if }\beta=1\text{,}\\
C\left(  1+\left\vert y\right\vert \right)  ^{-1} & \text{if }\beta>1\text{.}%
\end{array}
\right.
\end{align*}

\end{proof}

By Lemma \ref{Asymptotic Discrepancy}, the normalized discrepancy $r^{-\left(  d-1\right)
/2}\mathcal{D}\left(  r\Omega-x\right)  $ is a sum of terms $\Phi_{j}\left(
\left(  d+1\right)  /2,r,x\right)  $. The following lemma studies the two
terms $\Theta_{j}\left(  z,r,x\right)$ that appear in the definition of $\Phi_{j}\left(  z,r,x\right)  $.

\begin{lemma}
\label{Terms} Let $d\mu\left(  r\right)  $\ be a Borel probability measure on
$\mathbb{R}$ with support in $0<\varepsilon<r<\delta<+\infty$, and with
$\left\vert \widehat{\mu}\left(  \zeta\right)  \right\vert \leq B\left(
1+\left\vert \zeta\right\vert \right)  ^{-\beta}$. Let $c\left(  \xi\right)  $
be a bounded homogeneous function of degree $0$, let $ \left(  d+1\right)  /2\leq\operatorname{Re}\left(
z\right) <d$, and for $j=0,1,2,...$ and any choice of
$\pm$, define
\[
\Theta_{j}\left(  z,r,x\right)  =r^{-j}\sum_{n\in\mathbb{Z}^{d}\setminus\left\{
0\right\}  }c\left(  n\right)  \left\vert n\right\vert ^{-z-j}\exp\left(
\pm2\pi ig\left(  \mp n\right)  r\right)  \exp\left(  2\pi inx\right)  .
\]
Moreover, for $H,R\geq1$, define%
\[
\mathcal{F}_{j}\left(  z,H,R,x\right)  =\int_{\mathbb{R}}\left\vert \Theta
_{j}\left(  z,r,x\right)  \right\vert ^{2}d\mu_{H,R}\left(  r\right)  .
\]
Expand this last function into a Fourier series in the variable $x$,
\[
\mathcal{F}_{j}\left(  z,H,R,x\right)  =\sum_{k\in\mathbb{Z}^{d}}%
\widehat{\mathcal{F}}_{j}\left(  z,H,R,k\right)  \exp\left(  2\pi ikx\right)
.
\]
(1) If $j=0$ there exists a constant $C$, which may depend on $d$, $B$,
$\beta$ and $\sup_{n\in \mathbb{Z}^{d}}\{|c(n)|\}$, but it is independent of the complex parameter $z$ and of the real parameters
$H$ and $R$, such that the
Fourier coefficients of $\mathcal{F}_{0}\left(  z,H,R,x\right)  $ satisfy for
every $H,R\geq1$ and $k\in\mathbb{Z}^{d}$ the estimates
\[
\left\vert \widehat{\mathcal{F}}_{0}\left(  z,H,R,k\right)  \right\vert
\leq\left\{
\begin{array}
[c]{ll}%
C\left(  1+\left\vert k\right\vert \right)  ^{d-\beta-2\operatorname{Re}%
\left(  z\right)  } & \text{if }0\leq\beta<1\text{,}\\
C\left(  1+\left\vert k\right\vert \right)  ^{d-1-2\operatorname{Re}\left(
z\right)  }\log\left(  2+\left\vert k\right\vert \right)  & \text{if }%
\beta=1\text{,}\\
C\left(  1+\left\vert k\right\vert \right)  ^{d-1-2\operatorname{Re}\left(
z\right)  } & \text{if }\beta>1\text{.}%
\end{array}
\right.
\]
(2) If $j\geq1$ then there exists a constant $C$ such that the Fourier
coefficients of $\mathcal{F}_{j}\left(  z,H,R,x\right)  $ satisfy for every
$H,R\geq1$ and $k\in\mathbb{Z}^{d}$ the estimates
\[
\left\vert \widehat{\mathcal{F}}_{j}\left(  z,H,R,k\right)  \right\vert \leq
C\left(  R+H\right)  ^{-2j}\left(  1+\left\vert k\right\vert \right)
^{d-1-2\operatorname{Re}\left(  z\right)  }.
\]

\end{lemma}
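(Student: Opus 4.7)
My strategy is to expand $|\Theta_j|^2$ as a double lattice Fourier series, integrate in $r$ against $d\mu_{H,R}$, collapse to the diagonal $m=n-k$, and then invoke Lemma~\ref{Integral} on the resulting lattice sum. Carrying out the expansion termwise and integrating,
\[
\widehat{\mathcal{F}}_j(z,H,R,k)=\sum_{\substack{n\in\mathbb{Z}^d\setminus\{0\}\\ n\neq k}}c(n)\,\overline{c(n-k)}\,|n|^{-z-j}|n-k|^{-\bar z-j}\,I_j(n,k),
\]
where
\[
I_j(n,k)=\int_{\mathbb{R}}r^{-2j}\exp\!\bigl(\pm 2\pi i(g(\mp n)-g(\mp(n-k)))r\bigr)\,d\mu_{H,R}(r).
\]
The interchange of sum and integral is legitimate by the absolute-convergence regime in Lemma~\ref{Asymptotic Discrepancy}~(1), since $\operatorname{Re}(z)+j\geq(d+1)/2>d/2$.

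For part (1), $j=0$, I would recognise $I_0(n,k)$ as a Fourier transform of $d\mu_{H,R}$, and combine the identity $\widehat{\mu_{H,R}}(\zeta)=e^{-2\pi iR\zeta}\widehat{\mu}(H\zeta)$ with the Fourier-decay hypothesis and the assumption $H\geq 1$ to obtain
\[
|I_0(n,k)|\leq B\bigl(1+|g(\mp n)-g(\mp(n-k))|\bigr)^{-\beta}.
\]
A standard unit-cube comparison then converts the lattice sum into
\[
\int_{\mathbb{R}^d}|x|^{-\operatorname{Re}(z)}|x-k|^{-\operatorname{Re}(z)}\bigl(1+|g(\mp x)-g(\mp(x-k))|\bigr)^{-\beta}dx,
\]
and Lemma~\ref{Integral} with $\alpha=\operatorname{Re}(z)\in[(d+1)/2,d)$ produces precisely the three cases $\beta<1$, $\beta=1$, $\beta>1$ of the claimed estimate for $|k|\geq 1$. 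For $k=0$ the sum collapses to $\sum_{n\neq 0}|n|^{-2\operatorname{Re}(z)}<\infty$, which is absorbed into $C$.

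For part (2), $j\geq 1$, I would forgo the Fourier decay of $\widehat\mu$ entirely and exploit only that $d\mu_{H,R}$ is a probability measure supported in $[R+H\varepsilon,R+H\delta]$; this yields $|I_j(n,k)|\leq(R+H\varepsilon)^{-2j}\leq C(R+H)^{-2j}$. The residual lattice sum, now without any oscillatory cutoff, is bounded by Lemma~\ref{Integral} with $\beta=0$ (when $\alpha=\operatorname{Re}(z)+j$ still lies in $[(d+1)/2,d)$) or by a direct three-region splitting into $|n|\leq|k|/2$, $|n-k|\leq|k|/2$, and the rest (when $\alpha\geq d$, where the continuous integral would diverge at $0$ and $k$ but the lattice sum stays finite). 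Either way the bound is $C(1+|k|)^{d-2\operatorname{Re}(z)-2j}$, which, since $j\geq 1$, is a fortiori at most $C(1+|k|)^{d-1-2\operatorname{Re}(z)}$.

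The main obstacle I anticipate is the rigorous sum-to-integral conversion in part (1), which must simultaneously handle the integrable singularities at $n=0$ and $n=k$, the homogeneous decay $|n|^{-\operatorname{Re}(z)}|n-k|^{-\operatorname{Re}(z)}$, and the geometric cutoff $(1+|g(\mp n)-g(\mp(n-k))|)^{-\beta}$ whose vanishing set depends non-trivially on the pair $(n,k)$. The key observation is that each of these factors is slowly varying on the unit cube around a lattice point away from its singularities, by the homogeneity of $|\cdot|^{-\operatorname{Re}(z)}$ and the smoothness and positive homogeneity of $g$; this lets one pass from the sum to the integral with only a bounded loss and reduce matters to Lemma~\ref{Integral}.
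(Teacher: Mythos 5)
Your strategy matches the paper's: expand $|\Theta_j|^2$ and integrate termwise to identify $\widehat{\mathcal{F}}_j$, bound the oscillatory integral via the decay of $\widehat{\mu}$ and the identity $\widehat{\mu}_{H,R}(\zeta)=e^{-2\pi iR\zeta}\widehat{\mu}(H\zeta)$ together with $H\geq 1$, convert the lattice sum to an integral, and invoke Lemma~\ref{Integral}; for $j\geq 1$ drop the oscillation and use the support to get the $(R+H)^{-2j}$ gain. Two points need repair. First, your justification of the termwise product and integration is incorrect: $\operatorname{Re}(z)+j\geq(d+1)/2>d/2$ gives only $L^2$ convergence of the series defining $\Theta_j$, not absolute convergence, which would require $\operatorname{Re}(z)+j>d$ and generally fails for $j=0$. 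The paper addresses this precisely by introducing a summability cutoff (the mollified discrepancy of Remark~\ref{r1}) before interchanging, then passing to the limit; without such an argument the manipulation is not justified. Second, in part (2) your claimed intermediate bound $C(1+|k|)^{d-2\operatorname{Re}(z)-2j}$ for $\sum_{n}|n|^{-\alpha}|n-k|^{-\alpha}$ with $\alpha=\operatorname{Re}(z)+j$ is wrong once $\alpha\geq d$: the three-region split you describe gives a dominant contribution $\sim(1+|k|)^{-\alpha}$ (with a logarithm when $\alpha=d$) from $n$ near $0$ or $k$, which is larger than $(1+|k|)^{d-2\alpha}$ in that regime. The final conclusion $\leq C(1+|k|)^{d-1-2\operatorname{Re}(z)}$ nevertheless survives, since $-\operatorname{Re}(z)-j\leq d-1-2\operatorname{Re}(z)$ when $\operatorname{Re}(z)<d$ and $j\geq 1$, but the chain of inequalities as written contains a false step; the paper records the correct three-case intermediate bound.
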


\begin{proof}
Let us fix a choice of $\pm$,
\[
\Theta_{j}\left(  z,r,x\right)  =r^{-j}\sum_{n\in\mathbb{Z}^{d}\setminus \left\{
0\right\}  }c\left(  n\right)  \left\vert n\right\vert ^{-z-j}\exp\left(
-2\pi ig\left(  n\right)  r\right)  \exp\left(  2\pi inx\right)  .
\]
Expanding the product $\Theta_{j}\left(  z,r,x\right)  \cdot\overline
{\Theta_{j}\left(  z,r,x\right)  }$ and integrating against $d\mu_{H,R}\left(
r\right)  $, one obtains
\begin{align*}
&  \mathcal{F}_{j}\left(  z,H,R,x\right)  =%
{\displaystyle\int_{\mathbb{R}}}
\left\vert \Theta_{j}\left(  z,r,x\right)  \right\vert ^{2}d\mu_{H,R}\left(
r\right) \\
&  =\sum_{k\in\mathbb{Z}^{d}}\sum_{n\in\mathbb{Z}^{d}\setminus\left\{  0,k\right\}
}c\left(  n\right)  \overline{c\left(  n-k\right)}  \left\vert n\right\vert
^{-z-j}\left\vert n-k\right\vert ^{-\overline{z}-j}\exp\left(  2\pi ikx\right)
\\
&  \times\exp\left(  2\pi i\left(  g\left(  n-k\right)  -g\left(  n\right)
\right)  R\right)
{\displaystyle\int_{\mathbb{R}}}
\left(  R+Hr\right)  ^{-2j}\exp\left(  2\pi iH\left(  g\left(  n-k\right)
-g\left(  n\right)  \right)  r\right)  d\mu\left(  r\right)  .
\end{align*}
The product term by term of the series and the integration term by term can be
justified with a suitable summation method, which amounts to introduce a
cutoff in the the series that defines $\Theta_{j}\left(  z,r,x\right)  $. See
 Remark \ref{r1}. In particular, the Fourier coefficients
of $\mathcal{F}_{j}\left(  z,H,R,x\right)  $ are
\begin{align*}
&  \widehat{\mathcal{F}}_{j}\left(  z,H,R,k\right)  =\sum_{n\in\mathbb{Z}%
^{d}\setminus\left\{  0,k\right\}  }c\left(  n\right)  \overline{c\left(  n-k\right)}  \left\vert
n\right\vert ^{-z-j}\left\vert n-k\right\vert ^{-\overline{z}-j}\\
&  \times\exp\left(  2\pi i\left(  g\left(  n-k\right)  -g\left(  n\right)
\right)  R\right)
{\displaystyle\int_{\mathbb{R}}}
\left(  R+Hr\right)  ^{-2j}\exp\left(  2\pi iH\left(  g\left(  n-k\right)
-g\left(  n\right)  \right)  r\right)  d\mu\left(  r\right)  .
\end{align*}
When $j=0$, by the assumption on the Fourier
transform of the measure $d\mu\left(  r\right)  $,
\begin{align*}
&  \left\vert \widehat{\mathcal{F}}_{0}\left(  z,H,R,k\right)  \right\vert \\
&  \leq B\sup_{n\in\mathbb{Z}^{d}}\left\{  \left\vert c\left(  n\right)
\right\vert ^{2}\right\}  \sum_{n\in\mathbb{Z}^{d}\setminus\left\{  0,k\right\}
}\left\vert n\right\vert ^{-\operatorname{Re}\left(  z\right)  }\left\vert
n-k\right\vert ^{-\operatorname{Re}\left(  z\right)  }\left(  1+\left\vert
g\left(  n-k\right)  -g\left(  n\right)  \right\vert \right)  ^{-\beta}\\
&  \leq C\int_{\mathbb{R}^{d}}\left\vert x\right\vert ^{-\operatorname{Re}%
\left(  z\right)  }\left\vert x-k\right\vert ^{-\operatorname{Re}\left(
z\right)  }\left(  1+\left\vert g\left(  x-k\right)  -g\left(  x\right)
\right\vert \right)  ^{-\beta}dx.
\end{align*}
It then suffices to apply Lemma \ref{Integral}. 
The substitution of the series with an integral can be justified observing that
all functions involved are slowly varying when one replaces $n$ with an $x$
such that $|n-x|\leq C$.

When $j\geq1$,
\begin{align*}
&  \left\vert \widehat{\mathcal{F}}_{j}\left(  z,H,R,k\right)  \right\vert \\
&  \leq\sup_{n\in\mathbb{Z}^{d}}\left\{  \left\vert c\left(  n\right)
\right\vert ^{2}\right\}  \sum_{n\in\mathbb{Z}^{d}\setminus\left\{  0,k\right\}
}\left\vert n\right\vert ^{-\operatorname{Re}\left(  z\right)  -j}\left\vert
n-k\right\vert ^{-\operatorname{Re}\left(  z\right)  -j}%
{\displaystyle\int_{\mathbb{R}}}
\left(  R+Hr\right)  ^{-2j}d\mu\left(  r\right)\\
& \leq\begin{cases}
C\left(  R+H\right)  ^{-2j}\left(  1+\left\vert k\right\vert \right)
^{d-2j-2\operatorname{Re}\left(  z\right)  } & \text{ if }\operatorname{Re}\left(  z\right)+j<d, \\
C\left(  R+H\right)  ^{-2j}\left(  1+\left\vert k\right\vert \right)
^{-d }\log(2+|k|) & \text{ if }\operatorname{Re}\left(  z\right)+j=d, \\
C\left(  R+H\right)  ^{-2j}\left(  1+\left\vert k\right\vert \right)
^{-\operatorname{Re}\left(  z\right) -j }& \text{ if }\operatorname{Re}\left(  z\right)+j>d.
\end{cases}
\end{align*}
Finally observe that when $\alpha<d$ and $j\geq 1$, then all these estimates are dominated by 
$C\left(  R+H\right)  ^{-2j}\left(  1+\left\vert k\right\vert \right)
^{d-1-2\operatorname{Re}\left(  z\right)  }.$

\end{proof}

\begin{lemma}
\label{L(p)-5} Let $0\leq\beta<1$ and let $\mathcal{F}_{0}\left(
z,H,R,x\right)  $ be as in Lemma \ref{Terms}. Then there exists $C$ such that
for every $H,R\geq1$ the following hold.
\begin{itemize}
\item[(1)] If $d/2<\operatorname{Re}\left(  z\right)  \leq\left(  3d-2\beta\right)
/4$ and $2\leq p<\left(  d-2\beta\right)  /\left(  d-\beta-\operatorname{Re}%
\left(  z\right)  \right)  $, then
\[
\left\{  \int_{\mathbb{T}^{d}}\left\vert \mathcal{F}_{0}\left(
z,H,R,x\right)  \right\vert ^{p/2}dx\right\}  ^{1/p}\leq C\left(
\dfrac{d-2\beta}{d-\beta-\operatorname{Re}\left(  z\right)  }-p\right)
^{-\left(  d-\beta-\operatorname{Re}\left(  z\right)  \right)  /\left(
d-2\beta\right)  }.
\]
\item[(2)] If $\left(  3d-2\beta\right)  /4<\operatorname{Re}\left(  z\right)
<d-\beta/2$ and $4\leq p<d/\left(  d-\beta/2-\operatorname{Re}\left(
z\right)  \right)  $, then%
\[
\left\{  \int_{\mathbb{T}^{d}}\left\vert \mathcal{F}_{0}\left(
z,H,R,x\right)  \right\vert ^{p/2}dx\right\}  ^{1/p}\leq C\left(  \dfrac
{d}{d-\beta/2-\operatorname{Re}\left(  z\right)  }-p\right)  ^{\left(
d-\beta/2-\operatorname{Re}\left(  z\right)  \right)  /d-1/2}.
\]
\item[(3)] If $\operatorname{Re}\left(  z\right)  =d-\beta/2$ and $p<+\infty$, then
\[
\left\{  \int_{\mathbb{T}^{d}}\left\vert \mathcal{F}_{0}\left(
z,H,R,x\right)  \right\vert ^{p/2}dx\right\}  ^{1/p}\leq Cp^{1/2}.
\]
\end{itemize}
\end{lemma}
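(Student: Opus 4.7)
The key input is the Fourier-coefficient bound from Lemma \ref{Terms}(1), namely $|\widehat{\mathcal{F}}_0(z,H,R,k)| \leq C(1+|k|)^{d-\beta-2\operatorname{Re}(z)}$, which is uniform in $H$, $R$, and $\operatorname{Im}(z)$, together with the non-negativity of $\mathcal{F}_0$. Throughout I write $\gamma = 2\operatorname{Re}(z)-d+\beta$ for the decay exponent.

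For Part (2), with $p \geq 4$ so $p/2 \geq 2$, I apply Hausdorff--Young on $\mathbb{T}^d$: $\|\mathcal{F}_0\|_{L^{p/2}} \leq \|\widehat{\mathcal{F}}_0\|_{\ell^{q'}}$ with $q' = p/(p-2)$. Integrating the Fourier-coefficient bound over $k$ gives $\|\widehat{\mathcal{F}}_0\|_{\ell^{q'}}^{q'} \lesssim (\gamma q'-d)^{-1}$, finite precisely when $p < A_2 := d/(d-\beta/2-\operatorname{Re}(z))$. Since a short Taylor expansion shows $\gamma q' - d$ is comparable to $A_2-p$ near the critical point, raising to $-1/q'$ and then to the $(\cdot)^{1/p}$ power yields the claimed exponent $1/A_2 - 1/2 = (d-\beta/2-\operatorname{Re}(z))/d - 1/2$. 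Part (3) is the endpoint $\operatorname{Re}(z) = d-\beta/2$, where $\gamma = d$: the same Hausdorff--Young computation gives $\|\widehat{\mathcal{F}}_0\|_{\ell^{q'}}^{q'} \sim (p-2)/(2d)$ for every $p > 2$, so $\|\mathcal{F}_0\|_{L^{p/2}} \sim p/(2d)$ for large $p$, producing the $p^{1/2}$ rate after taking $(\cdot)^{1/p}$.

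Part (1) is the delicate case: $2 \leq p < 4$ and $d/2 < \operatorname{Re}(z) \leq (3d-2\beta)/4$. Hausdorff--Young no longer produces a direct bound on $L^{p/2}$ from Fourier-coefficient information (since $p/2 < 2$), and the $L^2$ norm of $\mathcal{F}_0$ is in general infinite in this range of $\operatorname{Re}(z)$. My plan is to use complex interpolation on the analytic family $z \mapsto \Theta_0(z,\cdot,\cdot)$, regarded as an analytic function into the complex interpolation scale $L^p(\mathbb{T}^d, L^2(d\mu_{H,R}))$ in a vertical strip. The two anchor estimates are: Parseval at $\operatorname{Re}(z) > d/2$ with $p=2$, which gives $\|\Theta_0(z)\|_{L^2(L^2)}^2 = \|\mathcal{F}_0(z)\|_{L^1} = \widehat{\mathcal{F}}_0(z,0) = \sum_{n \neq 0}|c(n)|^2 |n|^{-2\operatorname{Re}(z)} \lesssim (2\operatorname{Re}(z)-d)^{-1}$; and the bound of Part (2) at $\operatorname{Re}(z)$ slightly above $(3d-2\beta)/4$ with $p$ slightly above $4$. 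The crucial point is that we must interpolate $\Theta_0$ itself rather than $\mathcal{F}_0 = |\Theta_0|^2$, since the latter depends on $\bar z$ and is not holomorphic. Because the Fourier coefficients of $\Theta_0$ depend on $z$ only through their modulus $|n|^{-\operatorname{Re}(z)}$, all bounds are uniform in $\operatorname{Im}(z)$ and the admissibility conditions of Stein's theorem are satisfied; the interpolated inequality $\|\Theta_0(z_0)\|_{L^{p_0}(L^2)} \leq M_A^{1-\theta} M_B^{\theta}$, squared to an estimate on $\|\mathcal{F}_0(z_0)\|_{L^{p_0/2}}$, will yield the claimed blow-up $(A_1-p)^{-1/A_1}$ with $A_1 = (d-2\beta)/(d-\beta-\operatorname{Re}(z))$ once the anchor points are optimized against the target $(z_0,p_0)$.

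The main obstacle I anticipate is tracking constants through the complex interpolation carefully enough to recover the sharp exponent $-1/A_1$ rather than a weaker one: the anchor bounds $M_A \sim (2\operatorname{Re}(z_A)-d)^{-1/2}$ and $M_B$ from Part (2), together with the interpolation parameter $\theta$, must approach their respective singular points at rates calibrated to the rate at which $p_0$ approaches $A_1$. A secondary technical point, the justification of the termwise manipulations in the Fourier series for $\Theta_0$ (which in general does not converge absolutely at the relevant values of $z$), is handled by the mollification argument of Remark \ref{r1}.
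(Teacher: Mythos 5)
Your proposal is correct and follows essentially the same route as the paper: Parseval for $p=2$, Hausdorff--Young for $p\geq 4$, and Stein-type complex interpolation of the analytic family $z\mapsto\Theta_0(z,\cdot,\cdot)$ with values in the Hilbert space $L^2(d\mu_{H,R})$ for $2<p<4$, followed by a change of variables to convert blow-up in $\operatorname{Re}(z)$ near the critical line to blow-up in $p$ near the critical index. You also correctly identified the key subtlety that one must interpolate $\Theta_0$ rather than $\mathcal{F}_0=|\Theta_0|^2$, which is exactly what the paper does.
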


\begin{proof}
In this and in the following proofs, we shall repeatedly use an elementary inequality.
If $\delta>d$, then
\[
\sum_{k\in\mathbb Z^d}(1+|k|)^{-\delta}\leq C(\delta-d)^{-1}.
\]
If $p=2$ and $\operatorname{Re}\left(  z\right)  >d/2$ then, by Parseval
equality,
\begin{align*}
&  \left\{  \int_{\mathbb{T}^{d}}\left\vert \mathcal{F}_{0}\left(
z,H,R,x\right)  \right\vert ^{p/2}dx\right\}  ^{1/p}\\
&  =\left\{  \int_{\mathbb{T}^{d}}\int_{\mathbb{R}}\left\vert \Theta
_{0}\left(  z,r,x\right)  \right\vert ^{2}d\mu_{H,R}\left(  r\right)
dx\right\}  ^{1/p}\\
&  =\left\{  \int_{\mathbb{R}}d\mu\left(  r\right)  \sum_{n\in\mathbb{Z}%
^{d}\setminus\left\{  0\right\}  }\left\vert c\left(  n\right)  \right\vert
^{2}\left\vert n\right\vert ^{-2\operatorname{Re}\left(  z\right)  }\right\}
^{1/2}\\
&  \leq C\left(  \operatorname{Re}\left(  z\right)  -\dfrac{d}{2}\right)
^{-1/2}.
\end{align*}
If $4\leq p\leq+\infty$ and $\operatorname{Re}\left(  z\right)  >d\left(
1-1/p\right)  -\beta/2$ then $p/2>2$ and, by the Hausdorff Young inequality,
\begin{align*}
&  \left\{  \int_{\mathbb{T}^{d}}\left\vert \mathcal{F}_{0}\left(
z,H,R,x\right)  \right\vert ^{p/2}dx\right\}  ^{1/p}\\
&  \leq\left\{
{\displaystyle\sum_{k\in\mathbb{Z}^{d}}}
\left\vert \widehat{\mathcal{F}}_{0}\left(  z,H,R,k\right)  \right\vert
^{p/(p-2)}\right\}  ^{\left(  p-2\right)  /2p}\\
&  \leq C\left\{
{\displaystyle\sum_{k\in\mathbb{Z}^{d}}}
\left\vert \left(  1+\left\vert k\right\vert \right)  ^{d-\beta
-2\operatorname{Re}\left(  z\right)  }\right\vert ^{p/(p-2)}\right\}
^{\left(  p-2\right)  /2p}\\
&  \leq C\left(  \left(  2\operatorname{Re}\left(  z\right)  +\beta-d\right)
\dfrac{p}{p-2}-d\right)  ^{-\left(  p-2\right)  /2p}\\
&  =C\left(  \dfrac{p-2}{2p}\right)  ^{\left(  p-2\right)  /2p}\left(
\operatorname{Re}\left(  z\right)  +\dfrac{\beta}{2}-d\left(  1-\dfrac{1}%
{p}\right)  \right)  ^{-\left(  p-2\right)  /2p}\\
&  \leq C\left(  \operatorname{Re}\left(  z\right)  +\dfrac{\beta}{2}-d\left(
1-\dfrac{1}{p}\right)  \right)  ^{1/p-1/2}.
\end{align*}
This proves the cases $p=2$ and $p\geq4$. The case $2<p<4$ follows from these
cases via complex interpolation of vector valued $L\left(  p\right)  $ spaces.
For the definition of the complex interpolation method, see for example
\cite[Chapter 4 and Chapter 5]{BL}. Here we recall the
relevant result: Let $\mathbb{H}$ be a Hilbert space and $\mathbb{X}$ a
measure space, let $1\leq a<b\leq+\infty$, $-\infty<A<B<+\infty$, and let
$\Theta\left(  z\right)  $ be a function with values in the vector valued
space $L^{a}\left(  \mathbb{X},\mathbb{H}\right)  +L^{b}\left(  \mathbb{X}%
,\mathbb{H}\right)  $, continuous and bounded on the closed strip $\left\{
A\leq\operatorname{Re}\left(  z\right)  \leq B\right\}  $ and analytic on the
open strip $\left\{  A<\operatorname{Re}\left(  z\right)  <B\right\}  $.
Assume that there exist constants $M$ and $N$ such that for every
$-\infty<t<+\infty$,
\[
\left\{
\begin{array}
[c]{c}%
\left\Vert \Theta\left(  A+it\right)  \right\Vert _{L^{a}\left(
\mathbb{X},\mathbb{H}\right)  }\leq M,\\
\left\Vert \Theta\left(  B+it\right)  \right\Vert _{L^{b}\left(
\mathbb{X},\mathbb{H}\right)  }\leq N.
\end{array}
\right.
\]
If $1/p=\left(  1-\vartheta\right)  /a+\vartheta/b$, with $0<\vartheta<1$,
then
\[
\left\Vert \Theta\left(  \left(  1-\vartheta\right)  A+\vartheta B\right)
\right\Vert _{L^{p}\left(  \mathbb{X},\mathbb{H}\right)  }\leq M^{1-\vartheta
}N^{\vartheta}.
\]
Here the analytic function is $\Theta_{0}\left(  z,r,x\right)  $, the Hilbert
space $\mathbb H$ is $L^{2}\left(  \mathbb{R},d\mu_{H,R}\left(  r\right)  \right)  $, the
measure space $\mathbb X$ is the torus $\mathbb{T}^{d}$, $a=2$, $A=d/2+\varepsilon$,
$b=4$, $B=3d/4-\beta/2+\varepsilon$, with $\varepsilon>0$, $M=C\varepsilon
^{-1/2}$ and $N=C\varepsilon^{-1/4}$. By the above computations,
\begin{align*}
&  \left\{  \int_{\mathbb{T}^{d}}\left\vert \mathcal{F}_{0}\left(
z,H,R,x\right)  \right\vert ^{p/2}dx\right\}  ^{1/p}\\
&  \leq\left\{
\begin{array}
[c]{ll}%
C\varepsilon^{-1/2} & \text{if }p=2\text{ and }\operatorname{Re}\left(
z\right)  =d/2+\varepsilon\text{,}\\
C\varepsilon^{-1/4} & \text{if }p=4\text{ and }\operatorname{Re}\left(
z\right)  =3d/4-\beta/2+\varepsilon\text{.}%
\end{array}
\right.
\end{align*}
By complex interpolation with
$$\begin{cases}
1/p=\left(  1-\vartheta\right)  /2+\vartheta/4,\\
\operatorname{Re}\left(  z\right)  =\left(  1-\vartheta\right)  \left(
d/2+\varepsilon\right)  +\vartheta\left(  3d/4-\beta/2+\varepsilon\right)  ,
\end{cases}$$
that is, $2<p<4$ and $\operatorname{Re}\left(  z\right)  =d\left(
1-1/p\right)  +2\beta/p-\beta+\varepsilon$, one obtains
\begin{equation*}
  \left\{  \int_{\mathbb{T}^{d}}\left\vert \mathcal{F}_{0}\left(
z,H,R,x\right)  \right\vert ^{p/2}dx\right\}  ^{1/p}
  \leq C\left(  \operatorname{Re}\left(  z\right)  +\beta-\dfrac{2\beta}%
{p}-d\left(  1-\dfrac{1}{p}\right)  \right)  ^{-1/p}.
\end{equation*}
The estimates that we have obtained blow up when $\operatorname{Re}\left(
z\right)  \rightarrow \mathrm{critical}\left(  z\right)  +$,
\[
\operatorname{Re}\left(  z\right)  \rightarrow\left\{
\begin{array}
[c]{ll}%
d\left(  1-\dfrac{1}{p}\right)  +\dfrac{2\beta}{p}-\beta+ & \text{if }%
p\leq4\text{,}\\
d\left(  1-\dfrac{1}{p}\right)  -\dfrac{\beta}{2}+ & \text{if }p\geq4.\\
\end{array}
\right.
\]
This is the same as $p\rightarrow \mathrm{critical}\left(  p\right)  -$,
\[
p\rightarrow\left\{
\begin{array}
[c]{ll}%
\dfrac{d-2\beta}{d-\beta-\operatorname{Re}\left(  z\right)  }- & \text{if
} d/2<\operatorname{Re}(z)\leq(3d-2\beta)/4\text{,}\\
\dfrac{d}{d-\beta/2-\operatorname{Re}\left(  z\right)  }- & \text{if }%
(3d-2\beta)/4\leq\operatorname{Re}(z)\leq d-\beta/2.\\
\end{array}
\right.
\]
In order to complete the proof of the lemma, it suffices to translate the estimates 
 for $\operatorname{Re}(z)\rightarrow \mathrm{critical}\left(  z\right)  +$
into estimates for $p\rightarrow \mathrm{critical}\left(  p\right)  -$.

If $d/2<\operatorname{Re}%
\left(  z\right)  \leq\left(  3d-2\beta\right)  /4$ and $2\leq p<\left(
d-2\beta\right)  /\left(  d-\beta-\operatorname{Re}\left(  z\right)  \right)
$, then $2\leq p\leq4$ and $\operatorname{Re}\left(  z\right)  >d\left(
1-1/p\right)  +2\beta/p-\beta$, and one has
\begin{align*}
&  \left(  \operatorname{Re}\left(  z\right)  +\beta-\dfrac{2\beta}%
{p}-d\left(  1-\dfrac{1}{p}\right)  \right)  ^{-1/p}\\
&  =p^{1/p}\left(  d-\beta-\operatorname{Re}\left(  z\right)  \right)
^{-1/p}\left(  \dfrac{d-2\beta}{d-\beta-\operatorname{Re}\left(  z\right)
}-p\right)  ^{\left(  d-\beta-\operatorname{Re}\left(  z\right)  \right)
/\left(  d-2\beta\right)  -1/p}\\
&  \times\left(  \dfrac{d-2\beta}{d-\beta-\operatorname{Re}\left(  z\right)
}-p\right)  ^{-\left(  d-\beta-\operatorname{Re}\left(  z\right)  \right)
/\left(  d-2\beta\right)  }\\
&  \leq C\left(  \dfrac{d-2\beta}{d-\beta-\operatorname{Re}\left(  z\right)
}-p\right)  ^{-\left(  d-\beta-\operatorname{Re}\left(  z\right)  \right)
/\left(  d-2\beta\right)  }.
\end{align*}
We have used the inequalities $x^{1/x}\leq e^{1/e}$ for every $x>0$, and
$\left(  x-y\right)  ^{1/x-1/y}=\left(  \left(  x-y\right)  ^{-\left(
x-y\right)  }\right)  ^{1/xy}\leq\left(  e^{1/e}\right)  ^{1/xy}\leq e^{1/e}$
for every $x>y\geq1$. Observe that the above constant $C$ may depend on $d$,
$\beta$, $\operatorname{Re}\left(  z\right)  $, but it is independent of $p$.

If $\left(  3d-2\beta\right)  /4<\operatorname{Re}\left(  z\right)
<d-\beta/2$ and $4\leq p<d/\left(  d-\beta/2-\operatorname{Re}\left(
z\right)  \right)  $, then $\operatorname{Re}\left(  z\right)  >d\left(
1-1/p\right)  -\beta/2$,  and one has
\begin{align*}
&  \left(  \operatorname{Re}\left(  z\right)  +\dfrac{\beta}{2}-d\left(
1-\dfrac{1}{p}\right)  \right)  ^{1/p-1/2}\\
&  =p^{1/2-1/p}\left(  d-\left(  d-\beta/2-\operatorname{Re}\left(  z\right)
\right)  p\right)  ^{1/p-1/2}\\
&  =p^{1/2-1/p}\left(  d-\beta/2-\operatorname{Re}\left(  z\right)  \right)
^{1/p-1/2}\left(  \dfrac{d}{d-\beta/2-\operatorname{Re}\left(  z\right)
}-p\right)  ^{1/p-\left(  d-\beta/2-\operatorname{Re}\left(  z\right)
\right)  /d}\\
&  \times\left(  \dfrac{d}{d-\beta/2-\operatorname{Re}\left(  z\right)
}-p\right)  ^{\left(  d-\beta/2-\operatorname{Re}\left(  z\right)  \right)
/d-1/2}\\
&  \leq C\left(  \dfrac{d}{d-\beta/2-\operatorname{Re}\left(  z\right)
}-p\right)  ^{\left(  d-\beta/2-\operatorname{Re}\left(  z\right)  \right)
/d-1/2}.
\end{align*}
If $\operatorname{Re}\left(  z\right)  =d-\beta/2$ and $p<+\infty$, then
one has
\[
\left(  \operatorname{Re}\left(  z\right)  +\dfrac{\beta}{2}-d\left(
1-\dfrac{1}{p}\right)  \right)  ^{1/p-1/2}=d^{1/p-1/2}p^{-1/p}p^{1/2}\leq
Cp^{1/2}.
\]
\end{proof}

\begin{lemma}
\label{L(p)-6} Let $\beta=1$ and let $\mathcal{F}_{0}\left(  z,H,R,x\right)  $
be as in Lemma \ref{Terms}. Then there exists $C$ such that for every
$H,R\geq1$ the following hold.
\begin{itemize}
\item[(1)] If $d/2<\operatorname{Re}\left(  z\right)  \leq\left(  3d-2\right)  /4$
and $2\leq p<(d-2)/(d-1-\operatorname{Re}\left(  z\right)  )$, then
\[
\left\{  \int_{\mathbb{T}^{d}}\left\vert \mathcal{F}_{0}\left(
z,H,R,x\right)  \right\vert ^{p/2}dx\right\}  ^{1/p}\leq C\left(  \dfrac
{d-2}{d-1-\operatorname{Re}\left(  z\right)  }-p\right)  ^{-\left(
\operatorname{Re}\left(  z\right)  -1\right)  /\left(  d-2\right)  }.
\]
\item[(2)] If $\left(  3d-2\right)  /4<\operatorname{Re}\left(  z\right)  <d-1/2$ and
$4\leq p<d/\left(  d-\operatorname{Re}\left(  z\right)  -1/2\right)  $, then%
\[
\left\{  \int_{\mathbb{T}^{d}}\left\vert \mathcal{F}_{0}\left(
z,H,R,x\right)  \right\vert ^{p/2}dx\right\}  ^{1/p}\leq C\left(  \frac
{d}{d-1/2-\operatorname{Re}\left(  z\right)  }-p\right)  ^{-\left(
2\operatorname{Re}\left(  z\right)  +1\right)  /2d}.
\]
\item[(3)] If $\operatorname{Re}\left(  z\right)  =d-1/2$ and $p<+\infty$, then
\[
\left\{  \int_{\mathbb{T}^{d}}\left\vert \mathcal{F}_{0}\left(
z,H,R,x\right)  \right\vert ^{p/2}dx\right\}  ^{1/p}\leq Cp.
\]
\end{itemize}
\end{lemma}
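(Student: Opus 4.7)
The plan is to mirror the proof of Lemma~\ref{L(p)-5} step by step, with the sole substantive change being the bookkeeping of the extra factor $\log(2+|k|)$ in the Fourier coefficient bound from Lemma~\ref{Terms}(1) for $\beta=1$.

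First, at $p=2$, I would integrate $\mathcal{F}_0$ over $\mathbb{T}^d$ using Parseval applied to $\Theta_0$ and Fubini, obtaining
\[
\int_{\mathbb{T}^d}\mathcal{F}_0(z,H,R,x)\,dx=\sum_{n\in\mathbb{Z}^d\setminus\{0\}}|c(n)|^2|n|^{-2\operatorname{Re}(z)}\leq C\bigl(\operatorname{Re}(z)-d/2\bigr)^{-1}.
\]
Taking square roots yields the bound $C(\operatorname{Re}(z)-d/2)^{-1/2}$; the logarithm is absent here because only the $k=0$ Fourier coefficient enters. For $p\geq 4$, I would apply Hausdorff--Young to the Fourier expansion of $\mathcal{F}_0$ and insert the estimate of Lemma~\ref{Terms}(1) with $\beta=1$. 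The key elementary ingredient is
\[
\sum_{k\in\mathbb{Z}^d}(1+|k|)^{-\delta}\log^{\sigma}(2+|k|)\leq C(\delta-d)^{-\sigma-1},\qquad \delta>d,
\]
proved by comparison with the integral and the substitution $r=e^{t/(\delta-d)}$. With $\sigma=p/(p-2)$ and $\delta=(2\operatorname{Re}(z)+1-d)p/(p-2)$, and writing $\eta:=\operatorname{Re}(z)+1/2-d(1-1/p)$, this gives a bound of order $\eta^{-(p-1)/p}$ with a $p$-dependent prefactor that remains bounded in the stated range.

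The intermediate range $2<p<4$ is then handled by Stein--Calder\'on complex interpolation of vector-valued $L^p$ spaces exactly as in Lemma~\ref{L(p)-5}, with endpoints $p=2,\,\operatorname{Re}(z)=d/2+\varepsilon$ (bound $C\varepsilon^{-1/2}$) and $p=4,\,\operatorname{Re}(z)=3d/4-1/2+\varepsilon$ (bound $C\varepsilon^{-3/4}$ from the previous step). Finally, I would translate each estimate ``in $\operatorname{Re}(z)$ near critical'' into one ``in $p$ near the critical value $p^*$'', following the algebraic bookkeeping of Lemma~\ref{L(p)-5} and using the identities $(p^*-1)/p^*=(\operatorname{Re}(z)-1)/(d-2)$ for case (1) and $(p^*-1)/p^*=(2\operatorname{Re}(z)+1)/(2d)$ for case (2), together with the uniform bound $x^{1/x}\leq e^{1/e}$ to absorb residual $p$-dependent constants. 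Case (3), $\operatorname{Re}(z)=d-1/2$, is the limiting situation $p^*=+\infty$; there $\eta=d/p$ and $\eta^{-(p-1)/p}\leq Cp$, which accounts for the linear (rather than $\sqrt{p}$) growth.

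The only real obstacle is the careful tracking of the logarithm: it shifts the elementary sum estimate from $(\delta-d)^{-1}$ to $(\delta-d)^{-\sigma-1}$, and propagating this through Hausdorff--Young turns the exponent $1/p-1/2$ of Lemma~\ref{L(p)-5} into $-(p-1)/p$, which in turn produces the exponents $(\operatorname{Re}(z)-1)/(d-2)$ and $(2\operatorname{Re}(z)+1)/(2d)$ in the final statement. Once this accounting is done, every other step, including the complex interpolation and the algebraic passage between critical values of $\operatorname{Re}(z)$ and $p$, proceeds essentially verbatim from the preceding lemma.
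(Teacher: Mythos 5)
Your proposal is correct and follows the paper's own proof essentially line by line: Parseval at $p=2$, Hausdorff--Young with the extra $\log$ factor at $p\geq 4$ via the Gamma-function integral $\int_1^\infty t^{-\alpha}\log^\sigma t\,dt=(\alpha-1)^{-\sigma-1}\Gamma(\sigma+1)$, Stein--Calder\'on vector-valued complex interpolation for $2<p<4$, and finally the translation from $\operatorname{Re}(z)\to\mathrm{critical}(z)+$ to $p\to\mathrm{critical}(p)-$ using the identities $1-1/p^*=(\operatorname{Re}(z)-1)/(d-2)$ (case (1)) and $1-1/p^*=(2\operatorname{Re}(z)+1)/(2d)$ (case (2)) together with $x^{1/x}\leq e^{1/e}$. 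The endpoint data $C\varepsilon^{-1/2}$ at $p=2$ and $C\varepsilon^{-3/4}$ at $p=4$, and the linear growth $Cp$ at $\operatorname{Re}(z)=d-1/2$, all match the paper.
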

\begin{proof}
The proof is, \textit{mutatis mutandis}, as in the previous lemma, only observe 
the different exponents in the right hand side of the estimates. lf $p=2$
and $\operatorname{Re}\left(  z\right)  >d/2$, then
\[
\left\{  \int_{\mathbb{T}^{d}}\left\vert \mathcal{F}_{0}\left(
z,H,R,x\right)  \right\vert ^{p/2}dx\right\}  ^{1/p}\leq C\left(
\operatorname{Re}\left(  z\right)  -\dfrac{d}{2}\right)  ^{-1/2}.
\]
If $4\leq p\leq+\infty$ and $\operatorname{Re}\left(  z\right)  >d\left(
1-1/p\right)  -1/2$ then, by the Hausdorff Young inequality,
\begin{align*}
&  \left\{  \int_{\mathbb{T}^{d}}\left\vert \mathcal{F}_{0}\left(
z,H,R,x\right)  \right\vert ^{p/2}dx\right\}  ^{1/p}\\
&  \leq\left\{
{\displaystyle\sum_{k\in\mathbb{Z}^{d}}}
\left\vert \widehat{\mathcal{F}}_{0}\left(  z,H,R,k\right)  \right\vert
^{p/(p-2)}\right\}  ^{\left(  p-2\right)  /2p}\\
&  \leq C\left\{
{\displaystyle\sum_{k\in\mathbb{Z}^{d}}}
\left\vert \left(  1+\left\vert k\right\vert \right)  ^{d-1-2\operatorname{Re}%
\left(  z\right)  }\log\left(  2+\left\vert k\right\vert \right)  \right\vert
^{p/(p-2)}\right\}  ^{\left(  p-2\right)  /2p}.
\end{align*}
The series can be compared with the integral
\begin{align*}
&  \left\{  \int_{\mathbb{R}^{d}}\left\vert \left(  1+\left\vert x\right\vert
\right)  ^{d-1-2\operatorname{Re}\left(  z\right)  }\log\left(  2+\left\vert
x\right\vert \right)  \right\vert ^{p/(p-2)}dx\right\}  ^{\left(  p-2\right)
/2p}\\
&  =\left\{  \left\vert \left\{  \left\vert \vartheta\right\vert =1\right\}
\right\vert
{\displaystyle\int_{0}^{+\infty}}
\left(  1+\rho\right)  ^{\left(  d-1-2\operatorname{Re}\left(  z\right)
\right)  p/(p-2)}\log^{p/(p-2)}\left(  2+\rho\right)  \rho^{d-1}d\rho\right\}
^{\left(  p-2\right)  /2p}.
\end{align*}
The last integral can be compared to another integral,
\[%
{\displaystyle\int_{1}^{+\infty}}
t^{-\alpha}\log^{\beta}\left(  t\right)  dt=\left(  \alpha-1\right)
^{-\left(  \beta+1\right)  }%
{\displaystyle\int_{0}^{+\infty}}
s^{\beta}e^{-s}ds=\left(  \alpha-1\right)  ^{-\left(  \beta+1\right)  }%
\Gamma\left(  \beta+1\right)  .
\]
Hence,
\begin{align*}
&  \left\{
{\displaystyle\int_{0}^{+\infty}}
\left(  1+\rho\right)  ^{\left(  d-1-2\operatorname{Re}\left(  z\right)
\right)  p/(p-2)+d-1}\log^{p/(p-2)}\left(  2+\rho\right)  d\rho\right\}
^{\left(  p-2\right)  /2p}\\
&  \leq C\left(  \left(  2\operatorname{Re}\left(  z\right)  +1-d\right)
\dfrac{p}{p-2}-d\right)  ^{-\left(  1+p/(p-2)\right)  \left(  p-2\right)
/2p}\\
&  \leq C\left(  \operatorname{Re}\left(  z\right)  -d\left(  1-\dfrac{1}%
{p}\right)  +\dfrac{1}{2}\right)  ^{1/p-1}.
\end{align*}
This proves the cases $p=2$ and $p\geq4$. The case $2<p<4$ follows from these
cases via complex interpolation. By the above computations,
\begin{align*}
&  \left\{  \int_{\mathbb{T}^{d}}\left\vert \mathcal{F}_{0}\left(
z,H,R,x\right)  \right\vert ^{p/2}dx\right\}  ^{1/p}\\
&  \leq\left\{
\begin{array}
[c]{ll}%
C\varepsilon^{-1/2} & \text{if }p=2\text{ and }\operatorname{Re}\left(
z\right)  =d/2+\varepsilon\text{,}\\
C\varepsilon^{-3/4} & \text{if }p=4\text{ and }\operatorname{Re}\left(
z\right)  =\left(  3d-2\right)  /4+\varepsilon\text{.}%
\end{array}
\right.
\end{align*}
By complex interpolation, if $2<p<4$ and $\operatorname{Re}\left(  z\right)
=d\left(  1-1/p\right)  +2/p-1+\varepsilon$,
\begin{equation*}
 \left\{  \int_{\mathbb{T}^{d}}\left\vert \mathcal{F}_{0}\left(
z,H,R,x\right)  \right\vert ^{p/2}dx\right\}  ^{1/p}
  \leq C\left(  \operatorname{Re}\left(  z\right)  -\left(  d\left(
1-\dfrac{1}{p}\right)  +\dfrac{2}{p}-1\right)  \right)  ^{1/p-1}.
\end{equation*}
The estimates that we have obtained blow up when $\operatorname{Re}\left(
z\right)  \rightarrow \mathrm{critical}\left(  z\right)  +$,
\[
\operatorname{Re}\left(  z\right)  \rightarrow\left\{
\begin{array}
[c]{ll}%
d\left(  1-\dfrac{1}{p}\right)  +\dfrac{2}{p}-1+ & \text{if }%
p\leq4\text{,}\\
d\left(  1-\dfrac{1}{p}\right)  -\dfrac{1}{2}+ & \text{if }p\geq4.\\
\end{array}
\right.
\]
This is the same as $p\rightarrow \mathrm{critical}\left(  p\right)  -$,
\[
p\rightarrow\left\{
\begin{array}
[c]{ll}%
\dfrac{d-2}{d-1-\operatorname{Re}\left(  z\right)  }- & \text{if
} d/2<\operatorname{Re}(z)\leq(3d-2)/4\text{,}\\
\dfrac{d}{d-1/2-\operatorname{Re}\left(  z\right)  }- & \text{if }%
(3d-2)/4\leq\operatorname{Re}(z)\leq d-1/2.\\
\end{array}
\right.
\]
In order to complete the proof it suffices to translate, as in the previous lemma, the estimates 
 for $\operatorname{Re}(z)\rightarrow \mathrm{critical}\left(  z\right)  +$
into estimates for $p\rightarrow \mathrm{critical}\left(  p\right)  -$.
\end{proof}

\begin{lemma}
\label{L(p)-7} Let $\beta>1$ and let $\mathcal{F}_{0}\left(  z,H,R,x\right)  $
be as in the Lemma \ref{Terms}. Then there exists $C$ such that for every
$H,R\geq1$ the following hold.
\begin{itemize}
\item[(1)] If $d/2<\operatorname{Re}\left(  z\right)  \leq\left(  3d-2\right)  /4$
and $2\leq p<(d-2)(d-1-\operatorname{Re}\left(  z\right)  )$, then
\[
\left\{  \int_{\mathbb{T}^{d}}\left\vert \mathcal{F}_{0}\left(
z,H,R,x\right)  \right\vert ^{p/2}dx\right\}  ^{1/p}\leq C\left(  \dfrac
{d-2}{d-1-\operatorname{Re}\left(  z\right)  }-p\right)  ^{-\left(
d-1-\operatorname{Re}\left(  z\right)  \right)  /\left(  d-2\right)  }.
\]
\item[(2)] If $\left(  3d-2\right)  /4<\operatorname{Re}\left(  z\right)  <d-1/2$ and
$4\leq p<d/\left(  d-\operatorname{Re}\left(  z\right)  -1/2\right)  $, then%
\[
\left\{  \int_{\mathbb{T}^{d}}\left\vert \mathcal{F}_{0}\left(
z,H,R,x\right)  \right\vert ^{p/2}dx\right\}  ^{1/p}\leq C\left(  \frac
{d}{d-1/2-\operatorname{Re}\left(  z\right)  }-p\right)  ^{\left(
d-1/2-\operatorname{Re}\left(  z\right)  \right)  /d-1/2}.
\]
\item[(3)] If $\operatorname{Re}\left(  z\right)  =d-1/2$ and $p<+\infty$, then
\[
\left\{  \int_{\mathbb{T}^{d}}\left\vert \mathcal{F}_{0}\left(
z,H,R,x\right)  \right\vert ^{p/2}dx\right\}  ^{1/p}\leq Cp^{1/2}.
\]
\end{itemize}
\end{lemma}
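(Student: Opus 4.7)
The plan is to follow the template of Lemmas \ref{L(p)-5} and \ref{L(p)-6}, carrying out the Parseval/Hausdorff--Young/complex interpolation trilogy and then translating bounds in $\operatorname{Re}(z)$ into bounds in $p$. By Lemma \ref{Terms}, when $\beta>1$ the Fourier coefficients satisfy
\[
|\widehat{\mathcal{F}}_0(z,H,R,k)|\leq C(1+|k|)^{d-1-2\operatorname{Re}(z)}
\]
uniformly in $H,R\geq1$, which is the same bound as in the $\beta=1$ case of Lemma \ref{L(p)-6} but without the extra logarithm.

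First I would treat $p=2$: by Parseval and the support of $d\mu_{H,R}$, the $L^2(\mathbb{T}^d)$ norm reduces to $\left(\sum_{n\neq 0}|c(n)|^2|n|^{-2\operatorname{Re}(z)}\right)^{1/2}$, which is bounded by $C(\operatorname{Re}(z)-d/2)^{-1/2}$ when $\operatorname{Re}(z)>d/2$. Next, for $p\geq 4$, I would apply the Hausdorff--Young inequality with exponent $p/(p-2)$ to the Fourier series of $\mathcal{F}_0$:
\[
\left\{\int_{\mathbb{T}^d}|\mathcal{F}_0|^{p/2}dx\right\}^{1/p}
\leq C\left\{\sum_{k\in\mathbb{Z}^d}(1+|k|)^{(d-1-2\operatorname{Re}(z))p/(p-2)}\right\}^{(p-2)/2p},
\]
which converges when $\operatorname{Re}(z)>d(1-1/p)-1/2$ and yields the bound $C\left(\operatorname{Re}(z)+\tfrac12-d(1-\tfrac1p)\right)^{1/p-1/2}$ (here the absence of the logarithm is exactly what replaces the $(\,\cdot\,)^{1/p-1}$ of Lemma \ref{L(p)-6} by $(\,\cdot\,)^{1/p-1/2}$).

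For $2<p<4$ I would complex interpolate between the endpoints $p=2$, $\operatorname{Re}(z)=d/2+\varepsilon$ (with constant $C\varepsilon^{-1/2}$) and $p=4$, $\operatorname{Re}(z)=(3d-2)/4+\varepsilon$ (with constant $C\varepsilon^{-1/4}$), viewing $\Theta_0(z,r,x)$ as an analytic function with values in $L^a(\mathbb{T}^d,L^2(\mathbb{R},d\mu_{H,R}))$, exactly as in the previous two proofs. The interpolation gives, at $\operatorname{Re}(z)=d(1-1/p)+2/p-1+\varepsilon$,
\[
\left\{\int_{\mathbb{T}^d}|\mathcal{F}_0|^{p/2}dx\right\}^{1/p}\leq C\bigl(\operatorname{Re}(z)-(d(1-1/p)+2/p-1)\bigr)^{-1/p}.
\]

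Finally I would convert these estimates in $\operatorname{Re}(z)$ into the stated estimates in $p$ near the two critical thresholds $p=(d-2)/(d-1-\operatorname{Re}(z))$ and $p=d/(d-1/2-\operatorname{Re}(z))$, using the same elementary inequalities $x^{1/x}\leq e^{1/e}$ and $(x-y)^{1/x-1/y}\leq e^{1/e}$ employed in Lemma \ref{L(p)-5} to absorb the $p$-dependent prefactors into a constant. At $\operatorname{Re}(z)=d-1/2$ the Hausdorff--Young estimate specializes to $C p^{1/2}$, giving case (3). The main obstacle is purely book-keeping: verifying that with the $\beta>1$ exponent $d-1-2\operatorname{Re}(z)$ (rather than $d-\beta-2\operatorname{Re}(z)$) the formal computations of Lemma \ref{L(p)-5}, carried out with $\beta=1$ and without the logarithmic loss of Lemma \ref{L(p)-6}, produce exactly the exponents $-(d-1-\operatorname{Re}(z))/(d-2)$, $(d-1/2-\operatorname{Re}(z))/d-1/2$, and $1/2$ appearing in the statement. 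No genuinely new estimate is needed.
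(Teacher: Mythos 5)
Your proposal is correct and follows exactly the paper's approach: the paper's proof of this lemma is in fact a near-verbatim rerun of Lemmas \ref{L(p)-5} and \ref{L(p)-6}, with the Parseval bound at $p=2$, the Hausdorff--Young bound at $p\geq 4$ using the log-free Fourier coefficient estimate $C(1+|k|)^{d-1-2\operatorname{Re}(z)}$, complex interpolation between the same two endpoints $(p=2,\operatorname{Re}(z)=d/2+\varepsilon)$ and $(p=4,\operatorname{Re}(z)=(3d-2)/4+\varepsilon)$, and the same change of variables from $\operatorname{Re}(z)\to\mathrm{critical}(z)+$ to $p\to\mathrm{critical}(p)-$. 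No meaningful differences.
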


\begin{proof}
The proof is as in the previous two lemmas. If $p=2$ and $\operatorname{Re}\left(
z\right)  >d/2$ then, by Parseval equality,
\[
\left\{  \int_{\mathbb{T}^{d}}\left\vert \mathcal{F}_{0}\left(
z,H,R,x\right)  \right\vert ^{p/2}dx\right\}  ^{1/p}\leq C\left(
\operatorname{Re}\left(  z\right)  -\dfrac{d}{2}\right)  ^{-1/2}.
\]
If $4\leq p\leq+\infty$ and $\operatorname{Re}\left(  z\right)  >d\left(
1-1/p\right)  -1/2$ then, by the Hausdorff Young inequality,
\begin{equation*}
 \left\{  \int_{\mathbb{T}^{d}}\left\vert \mathcal{F}_{0}\left(
z,H,R,x\right)  \right\vert ^{p/2}dx\right\}  ^{1/p}
  \leq C\left(  \operatorname{Re}\left(  z\right)  -d\left(  1-\dfrac{1}%
{p}\right)  +\dfrac{1}{2}\right)  ^{1/p-1/2}.
\end{equation*}
The cases $2<p<4$ follow from these cases via complex interpolation. By the
above computations,
\begin{align*}
&  \left\{  \int_{\mathbb{T}^{d}}\left\vert \mathcal{F}_{0}\left(
z,H,R,x\right)  \right\vert ^{p/2}dx\right\}  ^{1/p}\\
&  \leq\left\{
\begin{array}
[c]{ll}%
C\varepsilon^{-1/2} & \text{if }p=2\text{ and }\operatorname{Re}\left(
z\right)  =d/2+\varepsilon\text{,}\\
C\varepsilon^{-1/4} & \text{if }p=4\text{ and }\operatorname{Re}\left(
z\right)  =\left(  3d-2\right)  /4+\varepsilon\text{.}%
\end{array}
\right.
\end{align*}
By complex interpolation, with $2<p<4$ and $\operatorname{Re}\left(  z\right)
=d\left(  1-1/p\right)  +2/p-1+\varepsilon$,
\begin{equation*}
  \left\{  \int_{\mathbb{T}^{d}}\left\vert \mathcal{F}_{0}\left(
z,H,R,x\right)  \right\vert ^{p/2}dx\right\}  ^{1/p}
 \leq C\left(  \operatorname{Re}\left(  z\right)  -\left(  d\left(
1-\dfrac{1}{p}\right)  +\dfrac{2}{p}-1\right)  \right)  ^{-1/p}.
\end{equation*}
As before, one can translate the estimates 
 for $\operatorname{Re}(z)\rightarrow \mathrm{critical}\left(  z\right)  +$
into estimates for $p\rightarrow \mathrm{critical}\left(  p\right)  -$.
\end{proof}
\begin{lemma}
\label{L(p)-8} Let $\beta\geq0$ and $j\geq1$, and let $\mathcal{F}_{j}\left(
z,H,R,x\right)  $ be as in the Lemma \ref{Terms}. Then there exists $C$ such
that for every $H,R\geq1$ the following hold.
\begin{itemize}
\item[(1)] If $d/2<\operatorname{Re}\left(  z\right)  \leq\left(  3d-2\right)  /4$
and $2\leq p<(d-2)(d-1-\operatorname{Re}\left(  z\right)  )$, then
\[
\left\{  \int_{\mathbb{T}^{d}}\left\vert \mathcal{F}_{j}\left(
z,H,R,x\right)  \right\vert ^{p/2}dx\right\}  ^{1/p}\leq C\left(  H+R\right)
^{-j}\left(  \dfrac{d-2}{d-1-\operatorname{Re}\left(  z\right)  }-p\right)
^{-\left(  d-1-\operatorname{Re}\left(  z\right)  \right)  /\left(
d-2\right)  }.
\]
\item[(2)] If $\left(  3d-2\right)  /4<\operatorname{Re}\left(  z\right)  <d-1/2$ and
$4\leq p<d/\left(  d-\operatorname{Re}\left(  z\right)  -1/2\right)  $, then%
\[
\left\{  \int_{\mathbb{T}^{d}}\left\vert \mathcal{F}_{j}\left(
z,H,R,x\right)  \right\vert ^{p/2}dx\right\}  ^{1/p}\leq C\left(  H+R\right)
^{-j}\left(  \frac{d}{d-1/2-\operatorname{Re}\left(  z\right)  }-p\right)
^{\left(  d-1/2-\operatorname{Re}\left(  z\right)  \right)  /d-1/2}.
\]
\item[(3)] If $\operatorname{Re}\left(  z\right)  =d-1/2$ and $p<+\infty$, then
\[
\left\{  \int_{\mathbb{T}^{d}}\left\vert \mathcal{F}_{j}\left(
z,H,R,x\right)  \right\vert ^{p/2}dx\right\}  ^{1/p}\leq C\left(  H+R\right)
^{-j}p^{1/2}.
\]
\end{itemize}
\end{lemma}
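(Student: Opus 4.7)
The plan is to mirror the three-step strategy of Lemma \ref{L(p)-7}, with the only novelty being that the extra factor $(R+H)^{-2j}$ present in the Fourier-coefficient bound of Lemma \ref{Terms}(2) must be tracked through the estimates. Since this factor is independent of the complex parameter $z$ and of the spatial variable $x$, it simply factors out at every stage and produces the claimed prefactor $(R+H)^{-j}$ (after taking the square root coming from the $1/p$ exterior power when one writes $\{\int|\mathcal{F}_j|^{p/2}\}^{1/p}=\|\mathcal{F}_j\|_{L^{p/2}}^{1/2}$).

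First I handle the endpoint $p=2$ via Parseval. Since $\mathcal{F}_j(z,H,R,x)\ge 0$, applying Parseval to the Fourier expansion of $\Theta_j$ and then integrating in $d\mu_{H,R}$ gives
\[
\int_{\mathbb{T}^d}\mathcal{F}_j(z,H,R,x)\,dx
=\sum_{n\neq 0}|c(n)|^2|n|^{-2\operatorname{Re}(z)-2j}\int_{\mathbb{R}}(R+Hr)^{-2j}d\mu(r).
\]
Because $d\mu$ is supported in $[\varepsilon,\delta]$ with $\varepsilon>0$, the last integral is bounded by $C(R+H)^{-2j}$; the sum converges (since $\operatorname{Re}(z)>d/2$ and $j\geq 1$) and the extraction of a square root yields a $(R+H)^{-j}$ factor. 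Next, for $p\geq 4$ I apply the Hausdorff--Young inequality to $\mathcal{F}_j$ with the bound from Lemma \ref{Terms}(2),
\[
\|\mathcal{F}_j\|_{L^{p/2}(\mathbb{T}^d)}\leq (R+H)^{-2j}
\Bigl\{\sum_{k\in\mathbb{Z}^d}(1+|k|)^{(d-1-2\operatorname{Re}(z))p/(p-2)}\Bigr\}^{(p-2)/p},
\]
and use the elementary bound $\sum_k(1+|k|)^{-\delta}\leq C(\delta-d)^{-1}$ whenever $\operatorname{Re}(z)>d(1-1/p)-1/2$. Taking the square root yields
\[
\{\|\mathcal{F}_j\|_{L^{p/2}}\}^{1/2}\leq C(R+H)^{-j}\bigl(\operatorname{Re}(z)+1/2-d(1-1/p)\bigr)^{1/p-1/2}.
\]

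For the intermediate range $2<p<4$, I invoke complex interpolation of the $L^p(\mathbb{T}^d,L^2(\mathbb{R},d\mu_{H,R}))$-valued analytic function $z\mapsto\Theta_j(z,r,x)$, exactly as in the previous lemma, on the strip with endpoints $A=d/2+\varepsilon$, $B=(3d-2)/4+\varepsilon$. The boundary bounds established above are $M=C(R+H)^{-j}\varepsilon^{-1/2}$ at $p=2$ and $N=C(R+H)^{-j}\varepsilon^{-1/4}$ at $p=4$, and since $(R+H)^{-j}$ is $z$-independent it propagates verbatim through the interpolation inequality $\|\Theta_j((1-\vartheta)A+\vartheta B)\|\leq M^{1-\vartheta}N^{\vartheta}$, giving
\[
\{\|\mathcal{F}_j\|_{L^{p/2}}\}^{1/2}\leq C(R+H)^{-j}\bigl(\operatorname{Re}(z)-d(1-1/p)-2/p+1\bigr)^{-1/p}
\]
for $2<p<4$ with $\operatorname{Re}(z)$ just above $d(1-1/p)+2/p-1$.

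Finally, exactly as in the closing calculation of Lemma \ref{L(p)-7}, one translates these blow-ups as $\operatorname{Re}(z)$ approaches the critical value from above into blow-ups as $p$ approaches the critical value from below, using the elementary inequalities $x^{1/x}\leq e^{1/e}$ and $(x-y)^{1/x-1/y}\leq e^{1/e}$ to absorb harmless powers. Since the factor $(R+H)^{-j}$ was carried unchanged throughout, the result is exactly the stated bounds with the additional prefactor $(R+H)^{-j}$ on the right-hand side. I do not foresee any genuine obstacle: the proof is a bookkeeping exercise on top of Lemma \ref{L(p)-7}, and the only point requiring a moment of verification is that the Parseval bound really produces $(R+H)^{-2j}$ (and not $R^{-2j}$), which follows from the support condition $r\geq\varepsilon>0$ forcing $R+Hr\gtrsim R+H$.
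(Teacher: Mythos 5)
Your proposal is correct and follows essentially the same three-step route as the paper: Parseval at $p=2$, Hausdorff--Young for $p\geq 4$ using the $(H+R)^{-2j}$ Fourier-coefficient bound from Lemma \ref{Terms}(2), vector-valued complex interpolation for the intermediate range, and the final translation of the blow-up in $\operatorname{Re}(z)$ into a blow-up in $p$ exactly as in Lemma \ref{L(p)-7}. The one point you flag as needing verification --- that the support condition $\varepsilon\leq r\leq\delta$ forces $R+Hr\gtrsim R+H$ so that the factor is $(R+H)^{-2j}$ and not merely $R^{-2j}$ --- is indeed the only place where the extra $j\geq 1$ structure enters, and you handle it correctly; the paper makes the same observation implicitly in passing from $\int(R+Hr)^{-2j}d\mu(r)$ to $(H+R)^{-2j}$.
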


\begin{proof}
The proof is as in the previous lemmas. If $p=2$ and $\operatorname{Re}\left(
z\right)  >d/2$ then, by Parseval equality,
\begin{align*}
&  \left\{  \int_{\mathbb{T}^{d}}\left\vert \mathcal{F}_{j}\left(
z,H,R,x\right)  \right\vert ^{p/2}dx\right\}  ^{1/p}\\
&  =\left\{  \int_{\mathbb{R}}\left(  R+Hr\right)  ^{-2j}d\mu\left(  r\right)
\sum_{n\in\mathbb{Z}^{d}\setminus\left\{  0\right\}  }\left\vert c\left(  n\right)
\right\vert ^{2}\left\vert n\right\vert ^{-2\operatorname{Re}\left(  z\right)
-2j}\right\}  ^{1/2}\\
&  \leq C\left(  H+R\right)  ^{-j}\left(  \operatorname{Re}\left(  z\right)+j
-\dfrac{d}{2}\right)  ^{-1/2} \leq C\left(  H+R\right)  ^{-j}\left(  \operatorname{Re}\left(  z\right)
-\dfrac{d}{2}\right)  ^{-1/2}.
\end{align*}
If $4\leq p\leq+\infty$ and $\operatorname{Re}\left(  z\right)  >d\left(
1-1/p\right)  -1/2$ then, by the Hausdorff Young inequality,
\begin{align*}
&  \left\{  \int_{\mathbb{T}^{d}}\left\vert \mathcal{F}_{j}\left(
z,H,R,x\right)  \right\vert ^{p/2}dx\right\}  ^{1/p}\\
&  \leq\left\{
{\displaystyle\sum_{k\in\mathbb{Z}^{d}}}
\left\vert \widehat{\mathcal{F}}_{j}\left(  z,H,R,k\right)  \right\vert
^{p/(p-2)}\right\}  ^{\left(  p-2\right)  /2p}\\
&  \leq C\left(  H+R\right)  ^{-j}\left\{
{\displaystyle\sum_{k\in\mathbb{Z}^{d}}}
\left\vert \left(  1+\left\vert k\right\vert \right)  ^{d-1-2\operatorname{Re}%
\left(  z\right)  }\right\vert ^{p/(p-2)}\right\}  ^{\left(  p-2\right)
/2p}\\
&  \leq C\left(  H+R\right)  ^{-j}\left(  \operatorname{Re}\left(  z\right)
-d\left(  1-\dfrac{1}{p}\right)  +\dfrac{1}{2}\right)  ^{1/p-1/2}.
\end{align*}
By complex interpolation, if $2<p<4$ and $\operatorname{Re}\left(  z\right)
=d\left(  1-1/p\right)  +2/p-1+\varepsilon$, then
\begin{equation*}
  \left\{  \int_{\mathbb{T}^{d}}\left\vert \mathcal{F}_{j}\left(
z,H,R,x\right)  \right\vert ^{p/2}dx\right\}  ^{1/p}
  \leq C\left(  H+R\right)  ^{-j}\left(  \operatorname{Re}\left(  z\right)
-\left(  d\left(  1-\dfrac{1}{p}\right)  +\dfrac{2}{p}-1\right)  \right)
^{1/p-1}.
\end{equation*}
As before, one can translate the estimates 
 for $\operatorname{Re}(z)\rightarrow \mathrm{critical}\left(  z\right)  +$
into estimates for $p\rightarrow \mathrm{critical}\left(  p\right)  -$.
\end{proof}

The above lemmas are enough for an upper bound for the norms of the
discrepancy in Theorem \ref{1}(1), for $p$ below the critical index $A$. 
\begin{proof} [Proof of Theorem \ref{1}(1)]
By Lemma \ref{Asymptotic Discrepancy},
\begin{align*}
&  \left\{  {%
{\displaystyle\int_{\mathbb{T}^{d}}}
}\left[  \int_{\mathbb{R}}\left\vert r^{-\left(  d-1\right)  /2}%
\mathcal{D}\left(  r\Omega-x\right)  \right\vert ^{2}d\mu_{H,R}\left(
r\right)  \right]  ^{p/2}dx\right\}  ^{1/p}\\
&  \leq%
{\displaystyle\sum_{j=0}^{h}}
\left\{  {%
{\displaystyle\int_{\mathbb{T}^{d}}}
}\left[  \int_{\mathbb{R}}\left\vert \Phi_{j}\left(  \left(  d+1\right)
/2,r,x\right)  \right\vert ^{2}d\mu_{H,R}\left(  r\right)  \right]
^{p/2}dx\right\}  ^{1/p}\\
&  +\left\{  {%
{\displaystyle\int_{\mathbb{T}^{d}}}
}\left[  \int_{\mathbb{R}}\left\vert \mathcal{R}_{h}\left(  r,x\right)
\right\vert ^{2}d\mu_{H,R}\left(  r\right)  \right]  ^{p/2}dx\right\}  ^{1/p}.
\end{align*}
Since the $\Phi_{j}\left(  \left(  d+1\right)  /2,r,x\right)  $'s are  sums of
two $\Theta_{j}\left(  \left(  d+1\right)  /2,r,x\right)  $'s to which the above
lemmas apply, under appropriate relations between $p$ and $\beta$ the mixed
norm of the discrepancy is uniformly bounded, and (1) follows from the
estimates in Lemma \ref{L(p)-5}, \ref{L(p)-6}, \ref{L(p)-7}, \ref{L(p)-8}.
\end{proof}

 In order to reach the critical index $p=A$ in Theorem \ref{1}(2), one needs an easy
lemma suggested by the Yano extrapolation theorem. See \cite{Yano} or
\cite[Chapter XII-4.41]{Z}.

\begin{lemma} 
\label{Extrapolation 1}
For every function $\mathcal F(x)$ defined on the torus $\mathbb T^d$ the following hold:
 \begin{itemize}
\item[(1)] Let $\alpha\geq0$, $A\geq1$, $K\geq2$, and assume
that
\begin{align*}
\sup_{x\in\mathbb{T}^{d}}\left\{  \left\vert \mathcal{F}\left(  x\right)
\right\vert \right\} & \leq K\text{,}\\
\left\{  \int_{\mathbb{T}^{d}}\left\vert \mathcal{F}\left(  x\right)
\right\vert ^{p}dx\right\}  ^{1/p} &\leq\left(  A-p\right)  ^{-\alpha
}\ \ \text{\ for every }0<p<A\text{.}%
\end{align*}
Then there exists $C$ independent of  $K$ and of $\mathcal{F}\left(  x\right)  $
such that
\[
\left\{ \int_{\mathbb{T}^{d}}\left\vert \mathcal{F}\left(  x\right)
\right\vert ^{A}dx\right\}  ^{1/A}\leq C\log^{\alpha}\left(  K\right)  .
\]
\item[(2)] Assume that $\alpha>0$ and that for every
$p<A<+\infty$,
\[
\left\{  \int_{\mathbb{T}^{d}}\left\vert \mathcal{F}\left(  x\right)
\right\vert ^{p}dx\right\}  ^{1/p}\leq\left(  A-p\right)  ^{-\alpha}.
\]
Then for every $\gamma>1+\alpha A$ there exists $C$ independent of
$\mathcal{F}\left(  x\right)  $ such that
\[
\int_{\mathbb{T}^{d}}\left\vert \mathcal{F}\left(  x\right)  \right\vert
^{A}\log^{-\gamma}\left(  2+\left\vert \mathcal{F}\left(  x\right)
\right\vert \right)  dx\leq C.
\]
\item[(3)] Assume that $\alpha>0$ and that for every $p<+\infty$,
\[
\left\{  \int_{\mathbb{T}^{d}}\left\vert \mathcal{F}\left(  x\right)
\right\vert ^{p}dx\right\}  ^{1/p}\leq p^{\alpha}.
\]
Then for every $\gamma<\alpha/e$ there exists $C>0$ independent of $\mathcal F\left(
x\right)  $ such that
\[
\int_{\mathbb{T}^{d}}\exp\left(  \gamma\left\vert \mathcal{F}\left(  x\right)
\right\vert ^{1/\alpha}\right)  dx\leq C.
\]
\end{itemize}
\end{lemma}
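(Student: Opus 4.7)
The plan is to treat the three parts as classical extrapolation arguments, each reducing to an appropriate optimization in $p$. In every case, the hypothesis controls $\|\mathcal{F}\|_p$ as $p$ approaches the critical index (either $A$ or $+\infty$), and the trick is to choose $p$ as a function of an auxiliary parameter: the uniform bound $K$ in part (1), a dyadic level $k$ in part (2), the order $n$ of a power-series expansion in part (3). All constants will depend only on $A$, $\alpha$, $\gamma$, and not on $\mathcal{F}$ itself.

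For part (1), I would invoke the log-convexity of the $L^p$ norms: with $1/A=(1-\theta)/p+\theta/\infty$, so that $\theta=1-p/A$, one has
\[
\|\mathcal{F}\|_A \leq \|\mathcal{F}\|_p^{p/A}\,\|\mathcal{F}\|_\infty^{1-p/A} \leq (A-p)^{-\alpha p/A}\,K^{1-p/A}.
\]
The optimal choice is $p=A-1/\log K$ (for $K\geq e$), which makes the first factor $(\log K)^{\alpha p/A}\leq\log^{\alpha}K$ and the second $K^{1/(A\log K)}=e^{1/A}$. For $2\leq K<e$ the trivial bound $\|\mathcal{F}\|_A\leq K$ absorbs the remaining range.

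For part (2), I would decompose $\mathbb{T}^d$ dyadically into $F_0=\{|\mathcal{F}|<1\}$ and $E_k=\{2^k\leq|\mathcal{F}|<2^{k+1}\}$ for $k\geq 0$. The contribution of $F_0$ to the integral is bounded by a constant, and on $E_k$ we have $|\mathcal{F}|^A\log^{-\gamma}(2+|\mathcal{F}|)\leq C\,2^{kA}(k+1)^{-\gamma}$. Chebyshev gives $|E_k|\leq 2^{-kp}\|\mathcal{F}\|_p^p\leq 2^{-kp}(A-p)^{-\alpha p}$, and the choice $p=A-1/(k+1)$ makes $(A-p)^{-\alpha p}\leq (k+1)^{\alpha A}$ and $2^{k(A-p)}\leq 2$, so that $2^{kA}|E_k|\leq 2(k+1)^{\alpha A}$. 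The whole integral is then dominated by $C+C\sum_{k\geq 0}(k+1)^{\alpha A-\gamma}$, which converges exactly when $\gamma>1+\alpha A$.

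For part (3), I would expand the exponential and integrate term by term, using the hypothesis at $p=n/\alpha$:
\[
\int_{\mathbb{T}^d}\exp\bigl(\gamma|\mathcal{F}(x)|^{1/\alpha}\bigr)\,dx = \sum_{n\geq 0}\frac{\gamma^n}{n!}\,\|\mathcal{F}\|_{n/\alpha}^{n/\alpha} \leq \sum_{n\geq 0}\frac{\gamma^n (n/\alpha)^n}{n!}.
\]
Stirling gives $(n/\alpha)^n/n!$ comparable to $(e/\alpha)^n/\sqrt{n}$, so the series is comparable to $\sum_n (\gamma e/\alpha)^n/\sqrt{n}$, which converges precisely when $\gamma<\alpha/e$. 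None of these arguments is deep; they are the standard Yano-type extrapolation. The only bookkeeping subtlety is making sure the optimal $p$ in (1) and (2) lies in the admissible range $0<p<A$ for $K$ (respectively $k$) large, with the trivial bound handling the remaining finite range.
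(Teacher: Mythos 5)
Your parts (1) and (2) track the paper's own proof almost line for line. In (1) the paper uses $\int|\mathcal F|^A \le K^{A-p}\int|\mathcal F|^p$ and minimizes $K^t t^{-\alpha}$ over $t=1-p/A\in(0,1)$; your log-convexity identity is exactly the same inequality, and your choice $p=A-1/\log K$ is a near-optimizer of the same expression, so the only difference is whether you exhibit the infimum or a witness for it. In (2) the paper decomposes by the level sets $\{2^j\le|\mathcal F|<2^{j+1}\}$, applies Chebyshev, and optimizes in $p$ with $p=A-\alpha A/(j\log 2)$; your decomposition and Chebyshev step are identical and your $p=A-1/(k+1)$ gives the same $j^{\alpha A}$ growth up to constants.

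Part (3) is where you genuinely diverge from the paper. The paper continues the layer-cake approach: it sets $\mathcal F_j=\mathcal F\chi_{\{j\le|\mathcal F|<j+1\}}$, uses Chebyshev and the bound $\|\mathcal F\|_p\le p^\alpha$ to obtain $\varepsilon_j\le j^{-p}p^{\alpha p}$, optimizes at $p=e^{-1}j^{1/\alpha}$ to get $\varepsilon_j\le\exp(-(\alpha/e)j^{1/\alpha})$, and then sums. You instead Taylor-expand the exponential and bound each moment $\int|\mathcal F|^{n/\alpha}\,dx\le(n/\alpha)^n$ directly from the hypothesis at $p=n/\alpha$, then invoke Stirling (or just the ratio test) to get convergence for $\gamma<\alpha/e$. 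Both are textbook Yano-type arguments and arrive at the same threshold; the power-series route is arguably shorter and makes the constant completely explicit, while the paper's distributional route has the advantage of reusing verbatim the machinery already set up for part (2). Either is acceptable here.

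One small accounting remark worth keeping in mind (it applies to both your argument and the paper's for part (1)): the nominal optimizer only lies in the admissible range $0<p<A$ once $K$ is large enough, and the trivial estimate $\|\mathcal F\|_A\le K$ must absorb the finitely many small $K\ge2$; you flag this explicitly, which is good.
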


\begin{proof}
(1) If $\alpha\geq0$ and $A\geq1$ and $0<p<A$,
\begin{align*}
&  \left\{  \int_{\mathbb{T}^{d}}\left\vert \mathcal{F}\left(  x\right)
\right\vert ^{A}dx\right\}  ^{1/A}\leq\sup_{x\in\mathbb{T}^{d}}\left\{
\left\vert \mathcal{F}\left(  x\right)  \right\vert ^{\left(  A-p\right)
/A}\right\}  \left\{  \int_{\mathbb{T}^{d}}\left\vert \mathcal{F}\left(
x\right)  \right\vert ^{p}dx\right\}  ^{1/A}\\
&  \leq K^{\left(  A-p\right)  /A}\left(  A-p\right)  ^{-\alpha p/A}%
=A^{-\alpha p/A}\left(  1-p/A\right)  ^{\alpha\left(  1-p/A\right)  }%
K^{1-p/A}\left(  1-p/A\right)  ^{-\alpha}\\
&  \leq K^{1-p/A}\left(  1-p/A\right)  ^{-\alpha}.
\end{align*}
Then, with $1-p/A=t$,
\[
\left\{  \int_{\mathbb{T}^{d}}\left\vert \mathcal{F}\left(  x\right)
\right\vert ^{A}dx\right\}  ^{1/A}\leq\inf_{0<t<1}\left\{  K^{t}t^{-\alpha
}\right\}  =e^{\alpha}\alpha^{-\alpha}\log^{\alpha}\left(  K\right)  .
\] 
(2) Let $\mathcal{F}_{0}\left(  x\right)  =\mathcal{F}\left(  x\right)
\chi_{\left\{  \left\vert \mathcal{F}\left(  x\right)  \right\vert <2\right\}
}\left(  x\right)  $ and $\mathcal{F}_{j}\left(  x\right)  =\mathcal{F}\left(
x\right)  \chi_{\left\{  2^{j}\leq\left\vert \mathcal{F}\left(  x\right)
\right\vert <2^{j+1}\right\}  }\left(  x\right)  $ if $j\geq1$, and let
$\varepsilon_{j}$ be the measure of the set where $\mathcal{F}_{j}\left(
x\right)  \neq0$. Then, if $j\geq1$ and $p<A$,
\[
2^{jp}\varepsilon_{j}\leq%
{\displaystyle\int_{\mathbb{T}^{d}}}
\left\vert \mathcal{F}_{j}\left(  x\right)  \right\vert ^{p}dx\leq%
{\displaystyle\int_{\mathbb{T}^{d}}}
\left\vert \mathcal{F}\left(  x\right)  \right\vert ^{p}dx\leq\left(
A-p\right)  ^{-\alpha p}.
\]
Hence, $\varepsilon_{j}\leq2^{-jp}\left(  A-p\right)  ^{-\alpha p}%
=2^{-Aj}2^{j\left(  A-p\right)  }\left(  A-p\right)  ^{-\alpha A}$, and the
minimum of this expression is attained at $p=A-\alpha A/j\log\left(  2\right)  $.
This gives
\[
\varepsilon_{j}\leq C2^{-Aj}j^{\alpha A}.
\]
Hence, if $\gamma>1+\alpha A$,
\begin{align*}
&  \int_{\mathbb{T}^{d}}\left\vert \mathcal{F}\left(  x\right)  \right\vert
^{A}\log^{-\gamma}\left(  2+\left\vert \mathcal{F}\left(  x\right)
\right\vert \right)  dx\\
&  =%
{\displaystyle\sum_{j=0}^{+\infty}}
{\displaystyle\int_{\mathbb{T}^{d}}}
\left\vert \mathcal{F}_{j}\left(  x\right)  \right\vert ^{A}\log^{-\gamma
}\left(  2+\left\vert \mathcal{F}_{j}\left(  x\right)  \right\vert \right)
dx\\
&  \leq2^{A}\log^{-\gamma}\left(  2\right)  +%
{\displaystyle\sum_{j=1}^{+\infty}}
2^{\left(  j+1\right)  A}\log^{-\gamma}\left(  2+2^{j}\right)  \varepsilon
_{j}\\
&  \leq C+C%
{\displaystyle\sum_{j=1}^{+\infty}}
j^{\alpha A-\gamma}\leq C.
\end{align*}
(3) Let $\mathcal{F}_{j}\left(  x\right)  =\mathcal{F}\left(  x\right)
\chi_{\left\{  j\leq\left\vert \mathcal{F}\left(  x\right)  \right\vert
<j+1\right\}  }\left(  x\right)  $, and let $\varepsilon_{j}$ be the measure of
the set where $\mathcal{F}_{j}\left(  x\right)  \neq0$. Then, if $j\geq1$ and
$p<A$,
\[
j^{p}\varepsilon_{j}\leq%
{\displaystyle\int_{\mathbb{T}^{d}}}
\left\vert \mathcal{F}_{j}\left(  x\right)  \right\vert ^{p}dx\leq%
{\displaystyle\int_{\mathbb{T}^{d}}}
\left\vert \mathcal{F}\left(  x\right)  \right\vert ^{p}dx\leq p^{\alpha p}.
\]
Hence $\varepsilon_{j}\leq j^{-p}p^{\alpha p}$. The minimum of this
expression is attained at $p=e^{-1}j^{1/\alpha}$, and this gives
\[
\varepsilon_{j}\leq\exp\left(  -\left(  \alpha/e\right)  j^{1/\alpha}\right)
.
\]
Hence, if $\gamma<\alpha/e$,
\begin{align*}
&
{\displaystyle\int_{\mathbb{T}^{d}}}
\exp\left(  \gamma\left\vert \mathcal{F}\left(  x\right)  \right\vert
^{1/\alpha}\right)  dx=%
{\displaystyle\sum_{j=0}^{+\infty}}
{\displaystyle\int_{\mathbb{T}^{d}}}
\exp\left(  \gamma\left\vert \mathcal{F}_{j}\left(  x\right)  \right\vert
^{1/\alpha}\right)  dx\\
&  \leq%
{\displaystyle\sum_{j=0}^{+\infty}}
\varepsilon_{j}\exp\left(  \gamma\left(  j+1\right)  ^{1/\alpha}\right)  \leq
e^{\gamma}+%
{\displaystyle\sum_{j=1}^{+\infty}}
\exp\left(  -\left(  \alpha/e-\gamma\left(  1+1/j\right)  ^{1/\alpha}\right)
j^{1/\alpha}\right)  \leq C.
\end{align*}
\end{proof}

\begin{proof}
[Proof of Theorem \ref{1}(2)] 
This follows from part (1) of the theorem via the extrapolation Lemma \ref{Extrapolation 1}. 
In the cases $p<+\infty$
one has just to recall that the
discrepancy satisfy the trivial bound $\left\vert \mathcal{D}\left(
r\Omega-x\right)  \right\vert \leq Cr^{d}$ for every $r\geq1$. The case $d=2$
and $p=+\infty$ and $d\mu\left(  x\right)  =\chi_{\left\{  0<r<1\right\}
}\left(  r\right)  $ is proved in \cite{Huxley1}. An alternative proof of all
cases can also be obtained via the mollified discrepancy defined in Remark \ref{r1}. For example, when
$d=2$, with the techniques in the above lemmas, one can prove that if $1\leq
H\leq R$, and $\delta\leq1/R$,
\begin{align*}
&  \sup_{x\in\mathbb{T}^{2}}\left\{
{\displaystyle\int_{\mathbb{R}}}
\left\vert \left(  r\pm\delta\right)  ^{-3/2}%
{\displaystyle\sum_{n\in\mathbb{Z}^{2}-\left\{  0\right\}  }}
\widehat{\varphi}\left(  \delta n\right)  \widehat{\chi}_{\Omega}\left(
\left(  r\pm\delta\right)  n\right)  \exp\left(  2\pi inx\right)  \right\vert
^{2}d\mu_{H,R}\left(  r\right)  \right\}  ^{1/2}\\
&  \leq\left\{
\begin{array}
[c]{ll}%
C\left\{
{\displaystyle\sum_{n\in\mathbb{Z}^{2}}}
\left(  1+\left\vert \delta n\right\vert \right)  ^{-\gamma}\left(
1+\left\vert k\right\vert \right)  ^{-\beta-1}\right\}  ^{1/2} & \text{if
}0\leq\beta<1\text{,}\\
C\left\{
{\displaystyle\sum_{n\in\mathbb{Z}^{2}}}
\left(  1+\left\vert \delta n\right\vert \right)  ^{-\gamma}\left(
1+\left\vert k\right\vert \right)  ^{-2}\log\left(  2+\left\vert k\right\vert
\right)  \right\}  ^{1/2} & \text{if }\beta=1\text{,}\\
C\left\{
{\displaystyle\sum_{n\in\mathbb{Z}^{2}}}
\left(  1+\left\vert \delta n\right\vert \right)  ^{-\gamma}\left(
1+\left\vert k\right\vert \right)  ^{-2}\right\}  ^{1/2} & \text{if }%
\beta>1\text{,}%
\end{array}
\right. \\
&  \leq\left\{
\begin{array}
[c]{ll}%
C\delta^{\left(  1-\beta\right)  /2} & \text{if }0\leq\beta<1\text{,}\\
C\log\left(  1/\delta\right)  & \text{if }\beta=1\text{,}\\
C\log^{1/2}\left(  1/\delta\right)  & \text{if }\beta>1\text{,}%
\end{array}
\right. \\
&  \leq\left\{
\begin{array}
[c]{ll}%
CR^{\left(  1-\beta\right)  /2} & \text{if }0\leq\beta<1\text{,}\\
C\log\left(  R\right)  & \text{if }\beta=1\text{,}\\
C\log^{1/2}\left(  R\right)  & \text{if }\beta>1\text{.}%
\end{array}
\right.
\end{align*}
\end{proof}

In order to prove the asymptotics of the norms as $H\rightarrow
+\infty$ in Theorem \ref{1} (3), one has to work a bit more. It follows from the previous 
proofs that the main term in the asymptotic expansion of the discrepancy is given by 
\begin{align*}
&  \Phi_{0}\left(  z,r,x\right)  =%
{\displaystyle\sum_{n\in\mathbb{Z}^{d}\setminus\left\{  0\right\}  }}
a_{0}\left(  n\right)  \left\vert n\right\vert ^{-z}\exp\left(  -2\pi
ig\left(  n\right)  r\right)  \exp\left(  2\pi inx\right) \\
& +%
{\displaystyle\sum_{n\in\mathbb{Z}^{d}\setminus\left\{  0\right\}  }}
b_{0}\left(  n\right)  \left\vert n\right\vert ^{-z}\exp\left(  2\pi ig\left(
-n\right)  r\right)  \exp\left(  2\pi inx\right)  .
\end{align*}
\ The following lemma is similar to the previous ones, just observe that one
integrates the square of this function, and not the square of the modulus.

\begin{lemma}
\label{Main Term} Define $\mathcal{G}\left(  z,x\right)  $ and $\mathcal{P}%
\left(  z,H,R,x\right)  $ by
\[
\mathcal{G}\left(  z,x\right)  =\sum_{k\in\mathbb{Z}^{d}}\left(  2\sum
_{\substack{n\in\mathbb{Z}^{d}\setminus\left\{  0,k\right\}  \\ g\left(  n-k\right)  =g\left(
n\right) } }a_{0}\left(  n\right)  b_{0}\left(  k-n\right)  \left\vert
n\right\vert ^{-z}\left\vert k-n\right\vert ^{-z}\right)  \exp\left(  2\pi
ikx\right)  ,
\]%
\[%
{\displaystyle\int_{\mathbb{R}}}
\Phi_{0}\left(  z,r,x\right)  ^{2}d\mu_{H,R}\left(  r\right)  =\mathcal{G}%
\left(  z,x\right)  +\mathcal{P}\left(  z,H,R,x\right)  .
\]
\begin{itemize}
\item[(1)] Under the relations between $p$ and $z$ in Lemma \ref{L(p)-7}, the function 
$\mathcal{G}\left(  z,x\right)  $  is in $L^{p/2}\left(  \mathbb{T}^{d}\right)  $,
\[
\left\{  \int_{\mathbb{T}^{d}}\left\vert \mathcal{G}\left(  z,x\right)
\right\vert ^{p/2}dx\right\}  ^{1/p}\leq C.
\]
\item[(2)] Under the relations between $p$ and $z$ in Lemma \ref{L(p)-5} if $0\leq\beta<1$,
or in Lemma \ref{L(p)-6} if $\beta=1$, or in Lemma \ref{L(p)-7} if $\beta>1$, also the function $\mathcal{P}\left(
z,H,R,x\right)  $ is in $L^{p/2}\left(  \mathbb{T}^{d}\right)  $, and there
exists $C$ such that for every $H,R\geq1$,
\[
\left\{  \int_{\mathbb{T}^{d}}\left\vert \mathcal{P}\left(  z,H,R,x\right)
\right\vert ^{p/2}dx\right\}  ^{1/p}\leq C.
\]
Moreover, if $\beta>0$ then this function vanishes as $H\rightarrow+\infty$,
uniformly in $R\geq1$,
\[
\lim_{H\rightarrow+\infty}\left\{  \int_{\mathbb{T}^{d}}\left\vert
\mathcal{P}\left(  z,H,R,x\right)  \right\vert ^{p/2}dx\right\}  ^{1/p}=0.
\]
\end{itemize}
\end{lemma}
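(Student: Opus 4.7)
My plan is to expand $\Phi_{0}(z,r,x)^{2}$, identify the resonant contributions that survive $d\mu_{H,R}$ integration independently of $H$ and $R$, and control the non-resonant remainder using the decay of $\widehat{\mu}$.

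First I would write $\Phi_{0}=A+B$, where $A$ carries the $\exp(-2\pi i g(n)r)$ phases and $B$ the $\exp(2\pi i g(-n)r)$ phases, so that $\Phi_{0}^{2}=A^{2}+2AB+B^{2}$. Multiplying series and integrating against $d\mu_{H,R}(r)$, the Fourier coefficient at frequency $k$ becomes a sum of terms of the shape $c_{1}(n)c_{2}(k-n)|n|^{-z}|k-n|^{-z}\exp(-2\pi iR\zeta)\widehat{\mu}(H\zeta)$, where the phase $\zeta$ equals $g(n)+g(k-n)$ for $A^{2}$, $-g(-n)-g(n-k)$ for $B^{2}$, and $g(n)-g(n-k)$ for $2AB$. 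By Lemma \ref{Support}(1) one has $g\geq c|\cdot|$, hence in the first two cases $|\zeta|\geq c(|n|+|k-n|)>0$ and these contributions are always non-resonant. Only the cross-term phase $g(n)-g(n-k)$ can vanish, and does so precisely on the codimension-one set $\{g(n)=g(n-k)\}$. The resonant terms (for which $\widehat{\mu}(0)=1$ independently of $H,R$) are exactly those collected in $\mathcal{G}(z,x)$, the factor $2$ coming from the expansion of $(A+B)^{2}$. Everything else goes into $\mathcal{P}(z,H,R,x)$.

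For part (1), I need the decay $|\widehat{\mathcal{G}}(z,k)|\leq C(1+|k|)^{d-1-2\operatorname{Re}(z)}$. Since the sum is restricted to lattice points on the codimension-one surface $\{g(x)=g(x-k)\}$, I would compare the discrete sum to the surface integral $\int_{\{g(x)=g(x-k)\}}|x|^{-\operatorname{Re}(z)}|x-k|^{-\operatorname{Re}(z)}\,d\sigma(x)$ and then rescale $x=|k|y$; the strict convexity of $g(\vartheta-\tau\omega)^{2}-g(\vartheta)^{2}$ in $\tau$ from the proof of Lemma \ref{Support} ensures the relevant hypersurface is smooth and separates $0$ from $k$, yielding the claimed decay. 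This matches the $\beta>1$ decay of Lemma \ref{Terms}(1), so the $L^{p/2}$ bound on $\mathcal{G}$ follows by Parseval at $p=2$, Hausdorff--Young at $p\geq 4$, and complex interpolation in between, exactly as in Lemma \ref{L(p)-7}.

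For part (2), the Fourier coefficients of $\mathcal{P}$ obey exactly the bounds of Lemma \ref{Terms}(1): the factor $|\widehat{\mu}(H\zeta)|\leq B(1+|\zeta|)^{-\beta}$ (valid since $H\geq 1$) reduces each estimate to Lemma \ref{Integral}, so the uniform $L^{p/2}$ bound follows from Lemmas \ref{L(p)-5}, \ref{L(p)-6}, \ref{L(p)-7}, according to the value of $\beta$. For the convergence $\mathcal{P}\to 0$ when $\beta>0$, I would fix $k$ and observe that every summand of $\widehat{\mathcal{P}}(z,H,R,k)$ carries $\widehat{\mu}(H\zeta_{n})$ with $\zeta_{n}\neq 0$, so it tends to zero as $H\to+\infty$, while remaining dominated by the absolutely summable bound just used. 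Dominated convergence in $\ell^{p/(p-2)}$, together with Hausdorff--Young and complex interpolation, yields $\|\mathcal{P}(z,H,R,\cdot)\|_{L^{p/2}}\to 0$; the convergence is uniform in $R\geq 1$ because $R$ enters only through the unimodular factor $\exp(-2\pi iR\zeta_{n})$.

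The main obstacle I expect is the resonant bound $|\widehat{\mathcal{G}}(z,k)|\leq C(1+|k|)^{d-1-2\operatorname{Re}(z)}$: estimating a weighted lattice sum restricted to a codimension-one set is not literally a case of Lemma \ref{Integral}, and requires counting integer points on or close to the level surface $\{g(n)=g(n-k)\}$ using the curvature of $\Omega$ and the quantitative strict convexity from Lemma \ref{Support}. The rest of the argument is a close imitation of the estimates already developed for $\mathcal{F}_{0}$.
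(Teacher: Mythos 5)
Your decomposition $\Phi_0=A+B$, $\Phi_0^2=A^2+2AB+B^2$, with $\mathcal{G}$ coming from the resonant part of the cross term and $\mathcal{P}$ collecting the rest, is exactly what the paper does (the paper writes $\mathcal{P}=\mathcal{P}_1+\mathcal{P}_2+\mathcal{P}_3$, with $\mathcal{P}_1$ the non-resonant part of $2AB$ and $\mathcal{P}_2,\mathcal{P}_3$ the diagonal pieces, which are automatically non-resonant because $g(n)+g(k-n)\geq c(|n|+|k-n|)>0$). Your treatment of part (2)---uniform bound on the Fourier coefficients via $|\widehat\mu(H\zeta)|\leq B(1+|\zeta|)^{-\beta}$ for $H\geq 1$, pointwise vanishing of each coefficient as $H\to+\infty$, dominated convergence in $\ell^{p/(p-2)}$, Hausdorff--Young, and complex interpolation for $2<p<4$, with uniformity in $R$ coming for free since $R$ enters only through unimodular factors---also matches the paper.

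The gap is in part (1), and it is the one you flag yourself. You propose to compare the resonant sum $\sum_{\{g(n-k)=g(n)\}}|n|^{-\operatorname{Re}(z)}|k-n|^{-\operatorname{Re}(z)}$ with a \emph{surface integral} over $\{g(x)=g(x-k)\}$ and to use lattice-point counting near that level surface. This is the wrong route: a weighted lattice sum restricted to a codimension-one set is \emph{not} in general comparable to the corresponding surface integral, and the number of lattice points on or very near $\{g(x)=g(x-k)\}$ can behave badly arithmetically (for the ball the surface is a hyperplane through the origin and can contain many or no lattice points depending on $k$; for a generic oval it typically contains none at all). The paper avoids this entirely by a one-line majorization: for an \emph{arbitrary} $\gamma>1$, and for every $n$ in the resonant set, one has $(1+|g(n-k)-g(n)|)^{-\gamma}=1$; since the remaining (non-resonant) terms of the cut-off sum are non-negative, it follows that
\[
\sum_{\substack{n\in\mathbb{Z}^d\setminus\{0,k\}\\ g(n-k)=g(n)}}|n|^{-\operatorname{Re}(z)}|k-n|^{-\operatorname{Re}(z)}\ \leq\ \sum_{n\in\mathbb{Z}^d\setminus\{0,k\}}|n|^{-\operatorname{Re}(z)}|k-n|^{-\operatorname{Re}(z)}\bigl(1+|g(n-k)-g(n)|\bigr)^{-\gamma},
\]
and the right-hand side, after passing to the integral, is precisely the $\beta>1$ case of Lemma~\ref{Integral}, giving $C(1+|k|)^{d-1-2\operatorname{Re}(z)}$. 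In other words, the ``count of lattice points near the level surface'' that you anticipate needing is already implicit in Lemma~\ref{Integral} (whose proof uses the quantitative strict convexity from Lemma~\ref{Support}); you do not need to produce it separately. Once this estimate for $\widehat{\mathcal{G}}(z,k)$ is in hand, the $L^{p/2}$ bound for $\mathcal{G}$ follows by Parseval / Hausdorff--Young / interpolation exactly as you describe.
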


\begin{proof}
Expanding the product $\Phi_{0}\left(  z,r,x\right)  \cdot\Phi_{0}\left(
z,r,x\right)  $ and integrating, one obtains
\begin{align*}
&
{\displaystyle\int_{\mathbb{R}}}
\Phi_{0}\left(  z,r,x\right)  ^{2}d\mu_{H,R}\left(  r\right) \\
&  =\mathcal{G}\left(  z,x\right)  +\mathcal{P}_{1}\left(  z,H,R,x\right)
+\mathcal{P}_{2}\left(  z,H,R,x\right)  +\mathcal{P}_{3}\left(
z,H,R,x\right)  ,
\end{align*}
where
\[
\mathcal{G}\left(  z,x\right)  =2\sum_{k\in\mathbb{Z}^{d}}\sum_{\substack{n\in
\mathbb{Z}^{d}\setminus\left\{  0,k\right\} \\ g\left(  n-k\right)  =g\left(
n\right) } }a_{0}\left(  n\right)  b_{0}\left(  k-n\right)  \left\vert
n\right\vert ^{-z}\left\vert k-n\right\vert ^{-z}\exp\left(  2\pi ikx\right)
,
\]%
\begin{align*}
&  \mathcal{P}_{1}\left(  z,H,R,x\right) \\
&  =2\sum_{k\in\mathbb{Z}^{d}}\sum_{\substack{n\in\mathbb{Z}^{d}\setminus\left\{  0,k\right\}
\\ g\left(  n-k\right)  \neq g\left(  n\right)}  }a_{0}\left(  n\right)
b_{0}\left(  k-n\right)  \left\vert n\right\vert ^{-z}\left\vert
k-n\right\vert ^{-z}\exp\left(  2\pi ikx\right) \\
&  \times\exp\left(  2\pi i\left(  g\left(  n-k\right)  -g\left(  n\right)
\right)  R\right)
{\displaystyle\int_{\mathbb{R}}}
\exp\left(  2\pi iH\left(  g\left(  n-k\right)  -g\left(  n\right)  \right)
r\right)  d\mu\left(  r\right)  ,
\end{align*}%
\begin{align*}
&  \mathcal{P}_{2}\left(  z,H,R,x\right) \\
&  =\sum_{k\in\mathbb{Z}^{d}}\sum_{n\in\mathbb{Z}^{d}\setminus\left\{  0,k\right\}
}a_{0}\left(  n\right)  a_{0}\left(  k-n\right)  \left\vert n\right\vert
^{-z}\left\vert k-n\right\vert ^{-z}\exp\left(  2\pi ikx\right) \\
&  \times\exp\left(  -2\pi i\left(  g\left(  n\right)  +g\left(  k-n\right)
\right)  R\right)
{\displaystyle\int_{\mathbb{R}}}
\exp\left(  -2\pi iH\left(  g\left(  n\right)  +g\left(  k-n\right)  \right)
r\right)  d\mu\left(  r\right)  ,
\end{align*}%
\begin{align*}
&  \mathcal{P}_{3}\left(  z,H,R,x\right) \\
&  =\sum_{k\in\mathbb{Z}^{d}}\sum_{n\in\mathbb{Z}^{d}\setminus\left\{  0,k\right\}
}b_{0}\left(  n\right)  b_{0}\left(  k-n\right)  \left\vert n\right\vert
^{-z}\left\vert k-n\right\vert ^{-z}\exp\left(  2\pi ikx\right) \\
&  \times\exp\left(  2\pi i\left(  g\left(  -n\right)  +g\left(  n-k\right)
\right)  R\right)
{\displaystyle\int_{\mathbb{R}}}
\exp\left(  2\pi iH\left(  g\left(  -n\right)  +g\left(  n-k\right)  \right)
r\right)  d\mu\left(  r\right)  .
\end{align*}
Let us consider the Fourier coefficients of function $\mathcal{G}\left(  z,x\right)$. First observe that these coefficients do not depend on $H$ and $R$.   Since
$a_{0}(n)$ and $b_{0}(-n)$ are bounded, the Fourier coefficient with $k=0$ is
bounded by
\[
\left\vert \widehat{\mathcal{G}}\left(  z,0\right)  \right\vert =\left\vert
2\sum_{n\in\mathbb{Z}^{d}\setminus\left\{  0\right\}  }a_{0}(n)b_{0}(-n)\left\vert
n\right\vert ^{-2z}\right\vert \leq C\sum_{n\in\mathbb{Z}^{d}\setminus\left\{
0\right\}  }\left\vert n\right\vert ^{-2\operatorname{Re}\left(  z\right)
}\leq C.
\]
For an arbitrary $\gamma>1$, the Fourier
coefficients with $k\neq0$ can be bounded by
\begin{align*}
&  \left\vert \widehat{\mathcal{G}}\left(  z,k\right)  \right\vert =\left\vert
2\sum_{\substack{n\in\mathbb{Z}^{d}\setminus\left\{  0,k\right\}  \\ g\left(  n-k\right)
=g\left(  n\right)  }}a_{0}\left(  n\right)  b_{0}\left(  k-n\right)
\left\vert n\right\vert ^{-z}\left\vert k-n\right\vert ^{-z}\right\vert \\
&  \leq C\sum_{n\in\mathbb{Z}^{d}-\left\{  0,k\right\}  }\left\vert
n\right\vert ^{-\operatorname{Re}\left(  z\right)  }\left\vert k-n\right\vert
^{-\operatorname{Re}\left(  z\right)  }\left(  1+\left\vert g\left(
n-k\right)  -g\left(  n\right)  \right\vert \right)  ^{-\gamma}\\
&  \leq C%
{\displaystyle\int_{\mathbb{R}^{d}}}
\left\vert x\right\vert ^{-\operatorname{Re}\left(  z\right)  }\left\vert
k-x\right\vert ^{-\operatorname{Re}\left(  z\right)  }\left(  1+\left\vert
g\left(  x-k\right)  -g\left(  x\right)  \right\vert \right)  ^{-\gamma}dx.
\end{align*}
Hence, by Lemma \ref{Integral}, the last integral is dominated by 
$C\left\vert k\right\vert ^{d-1-2\operatorname{Re}\left(  z\right)  }$.
Therefore, for every $k$, 
\[
\left\vert \widehat{\mathcal{G}}\left(  z,k\right)  \right\vert\leq C(1+\left\vert k\right\vert) ^{d-1-2\operatorname{Re}\left(  z\right)  }.
\]
The estimates of the Fourier coefficients of $\mathcal{P}_{1}\left(
z,H,R,x\right)  $ are similar to the ones of $\mathcal{G}\left(  z,x\right)
$. First observe that $\widehat{\mathcal{P}}_{1}\left(  z,H,R,0\right)  =0$.
Then, by the assumption on the measure $d\mu\left(  r\right)  $, if $k\neq0$
there exists $C$ such that for every $H\geq1$,
\begin{align*}
&  \left\vert \widehat{\mathcal{P}}_{1}\left(  z,H,R,k\right)  \right\vert \\
&  =\Big\vert 2\sum_{\substack{n\in\mathbb{Z}^{d}\setminus\left\{  0,k\right\}  \\ g\left(
n-k\right)  \neq g\left(  n\right)  }}a_{0}\left(  n\right)  b_{0}\left(
k-n\right)  \left\vert n\right\vert ^{-z}\left\vert k-n\right\vert
^{-z} \\
&   \times\exp\left(  2\pi i\left(  g\left(  n-k\right)  -g\left(
n\right)  \right)  R\right)
{\displaystyle\int_{\mathbb{R}}}
\exp\left(  2\pi iH\left(  g\left(  n-k\right)  -g\left(  n\right)  \right)
r\right)  d\mu\left(  r\right) \Big\vert \\
&  \leq C\sum_{n\in\mathbb{Z}^{d}\setminus\left\{  0,k\right\}  }\left\vert
n\right\vert ^{-\operatorname{Re}\left(  z\right)  }\left\vert k-n\right\vert
^{-\operatorname{Re}\left(  z\right)  }\left(  1+H\left\vert g\left(
n-k\right)  -g\left(  n\right)  \right\vert \right)  ^{-\beta}\\
&  \leq C%
{\displaystyle\int_{\mathbb{R}^{d}}}
\left\vert x\right\vert ^{-\operatorname{Re}\left(  z\right)  }\left\vert
k-x\right\vert ^{-\operatorname{Re}\left(  z\right)  }\left(  1+H\left\vert
g\left(  x-k\right)  -g\left(  x\right)  \right\vert \right)  ^{-\beta}dx.
\end{align*}
Hence, by Lemma \ref{Integral}, for every $k\neq 0$,
\[
 \left\vert \widehat{\mathcal{P}}_{1}\left(  z,H,R,k\right)  \right\vert
 \leq\left\{
\begin{array}
[c]{ll}%
C\left\vert k\right\vert ^{d-2\alpha-\beta} & \text{if }0\leq\beta<1\text{,}\\
C\left\vert k\right\vert ^{d-2\alpha-1}\log\left(  2+\left\vert k\right\vert
\right)  & \text{if }\beta=1\text{,}\\
C\left\vert k\right\vert ^{d-2\alpha-1} & \text{if }\beta>1\text{.}%
\end{array}
\right.
\]
These estimates are independent of $H,R\geq1$. Hence, by dominated convergence
applied to the sum that defines $\widehat{\mathcal{P}}_{1}\left(
z,H,R,k\right)  $, if $\beta>0$ then
\[
\lim_{H\rightarrow+\infty}\left\{  \widehat{\mathcal{P}}_{1}\left(
z,H,R,k\right)  \right\}  =0.
\]
The estimates of the Fourier coefficients of $\mathcal{P}_{2}\left(
z,H,R,x\right)  $ and $\mathcal{P}_{3}\left(  z,H,R,x\right)  $ are easier.
Since $g\left(  x\right)  \geq A\left\vert x\right\vert $ with $A>0$,
\begin{align*}
&  \left\vert \widehat{\mathcal{P}}_{2}\left(  z,H,R,k\right)  \right\vert \\
&  =\Big\vert \sum_{n\in\mathbb{Z}^{d}\setminus\left\{  0,k\right\}  }a_{0}\left(
n\right)  a_{0}\left(  k-n\right)  \left\vert n\right\vert ^{-z}\left\vert
k-n\right\vert ^{-z} \\
&   \times\exp\left(  -2\pi i\left(  g\left(  n\right)  +g\left(
k-n\right)  \right)  R\right)
{\displaystyle\int_{\mathbb{R}}}
\exp\left(  -2\pi iH\left(  g\left(  n\right)  +g\left(  k-n\right)  \right)
r\right)  d\mu\left(  r\right)  \Big\vert \\
&  \leq CH^{-\beta}\sum_{n\in\mathbb{Z}^{d}\setminus\left\{  0,k\right\}  }\left\vert
n\right\vert ^{-\operatorname{Re}\left(  z\right)  }\left\vert k-n\right\vert
^{-\operatorname{Re}\left(  z\right)  }\left(  \left\vert n\right\vert
+\left\vert k-n\right\vert \right)  ^{-\beta}\\
&  \leq CH^{-\beta}\left(  1+\left\vert k\right\vert \right)  ^{-\beta}%
\sum_{n\in\mathbb{Z}^{d}\setminus\left\{  0,k\right\}  }\left\vert n\right\vert
^{-\operatorname{Re}\left(  z\right)  }\left\vert k-n\right\vert
^{-\operatorname{Re}\left(  z\right)  }\\
&  \leq CH^{-\beta}\left(  1+\left\vert k\right\vert \right)  ^{d-\beta
-2\operatorname{Re}\left(  z\right)  }.
\end{align*}
Moreover, by this estimate, if $\beta>0$ then
\[
\lim_{H\rightarrow+\infty}\left\{  \widehat{\mathcal{P}}_{2}\left(
z,H,R,k\right)  \right\}  =0.
\]
The estimates of the Fourier coefficients of $\mathcal{P}_{3}\left(
z,H,R,x\right)  $ are analogous to the ones of $\mathcal{P}_{2}\left(
z,H,R,x\right)  $. The estimates of the norms in $L^{p/2}\left(
\mathbb{T}^{d}\right)  $ of these functions in the cases $p=2$ and $p\geq4$
follow from the estimates of the Fourier coefficients of the functions
involved, the Parseval or Hausdorff Young inequality, and dominated
convergence. Finally, the cases $2<p<4$ follow by complex interpolation. The
details are as in the proof of Lemmas \ref{L(p)-5}, \ref{L(p)-6},
\ref{L(p)-7}.
\end{proof}

\begin{proof}
[Proof of Theorem \ref{1}(3)] 
With the
notation of the previous lemmas, since the discrepancy is real one can replace the square
of a modulus with a plain square, and write
\begin{align*}
&  \left\{  \int_{\mathbb{T}^{d}}\left(
{\displaystyle\int_{\mathbb{R}}}
\left\vert r^{-\left(  d-1\right)  /2}\mathcal{D}\left(  r\Omega-x\right)
\right\vert ^{2}d\mu_{H,R}\left(  r\right)  \right)  ^{p/2}dx\right\}
^{1/p}\\
&  =\left\{  \int_{\mathbb{T}^{d}}\left(
{\displaystyle\int_{\mathbb{R}}}
\left(
{\displaystyle\sum_{j=0}^{h}}
\Phi_{j}\left(  \left(  d+1\right)  /2,r,x\right)  +\mathcal{R}_{h}\left(
r,x\right)  \right)  ^{2}d\mu_{H,R}\left(  r\right)  \right)  ^{p/2}%
dx\right\}  ^{1/p}.
\end{align*}
The inner integral is equal to
\begin{align*}
&
{\displaystyle\int_{\mathbb{R}}}
\left(
{\displaystyle\sum_{j=0}^{h}}
\Phi_{j}\left(  \left(  d+1\right)  /2,r,x\right)  +\mathcal{R}_{h}\left(
r,x\right)  \right)  ^{2}d\mu_{H,R}\left(  r\right) \\
&  =\mathcal{G}\left(  \left(  d+1\right)  /2,x\right)  +\mathcal{P}\left(
\left(  d+1\right)  /2,H,R,x\right) \\
&  +%
{\displaystyle\sum_{0\leq i,j\leq h,\ i+j>0}}
{\displaystyle\int_{\mathbb{R}}}
\Phi_{i}\left(  \left(  d+1\right)  /2,r,x\right)  \Phi_{j}\left(  \left(
d+1\right)  /2,r,x\right)  d\mu_{H,R}\left(  r\right) \\
&  +2%
{\displaystyle\sum_{0\leq j\leq h}}
{\displaystyle\int_{\mathbb{R}}}
\Phi_{j}\left(  \left(  d+1\right)  /2,r,x\right)  \mathcal{R}_{h}\left(
r,x\right)  d\mu_{H,R}\left(  r\right) \\
&  +%
{\displaystyle\int_{\mathbb{R}}}
\mathcal{R}_{h}\left(  r,x\right)  ^{2}d\mu_{H,R}\left(  r\right)  .
\end{align*}
By the above lemmas, all these terms give a bounded contribution. The main
term is $\mathcal{G}\left(  \left(  d+1\right)  /2,x\right)  $, and it is
independent of $H$, $R$ and $d\mu(r)$. The contributions of the other terms is negligible
when $\beta>0$ and $H\rightarrow+\infty$. For example, let us estimate the integral with the
mixed product $\Phi_{i}\left(  \left(  d+1\right)  /2,r,x\right)  \Phi
_{j}\left(  \left(  d+1\right)  /2,r,x\right)  $. A repeated application of
the Cauchy Schwarz inequality gives
\begin{align*}
&  \int_{\mathbb{T}^{d}}\left\vert
{\displaystyle\int_{\mathbb{R}}}
\Phi_{i}\left(  \left(  d+1\right)  /2,r,x\right)  \Phi_{j}\left(  \left(
d+1\right)  /2,r,x\right)  d\mu_{H,R}\left(  r\right)  \right\vert ^{p/2}dx\\
&  \leq\int_{\mathbb{T}^{d}}\left(
{\displaystyle\int_{\mathbb{R}}}
\left\vert \Phi_{i}\left(  \left(  d+1\right)  /2,r,x\right)  \right\vert
^{2}d\mu_{H,R}\left(  r\right)  \right)  ^{p/4}\\
&  \times\left(
{\displaystyle\int_{\mathbb{R}}}
\left\vert \Phi_{j}\left(  \left(  d+1\right)  /2,r,x\right)  \right\vert
^{2}d\mu_{H,R}\left(  r\right)  \right)  ^{p/4}dx\\
&  \leq\left\{  \int_{\mathbb{T}^{d}}\left(
{\displaystyle\int_{\mathbb{R}}}
\left\vert \Phi_{i}\left(  \left(  d+1\right)  /2,r,x\right)  \right\vert
^{2}d\mu_{H,R}\left(  r\right)  \right)  ^{p/2}dx\right\}  ^{1/2}\\
&  \times\left\{  \int_{\mathbb{T}^{d}}\left(
{\displaystyle\int_{\mathbb{R}}}
\left\vert \Phi_{j}\left(  \left(  d+1\right)  /2,r,x\right)  \right\vert
^{2}d\mu_{H,R}\left(  r\right)  \right)  ^{p/2}dx\right\}  ^{1/2}.
\end{align*}
By Lemma \ref{L(p)-5}, or Lemma \ref{L(p)-6} or Lemma \ref{L(p)-7}, the terms with $i=0$ or $j=0$ give a bounded contribution. By Lemma \ref{L(p)-8}, the terms with $i>0$ or with $j>0$ converge to $0$ when $H+R\rightarrow
+\infty$.
\end{proof}

\begin{proof}
[Proof of Corollary \ref{1.1}] The corollary is an immediate consequence of part (1)
of the theorem, and of the extrapolation Lemma \ref{Extrapolation 1}. See \cite{Yano} or \cite[Chapter XII-4.41]{Z}.
\end{proof}

\begin{proof}
[Proof of Corollary \ref{1.2}] By the Lemma \ref{Asymptotic Discrepancy},
\[
\mathcal{G}\left(  x\right)  =\sum_{k\in\mathbb{Z}^{d}}\left(  2\sum
_{\substack{n\in\mathbb{Z}^{d}\setminus\left\{  0,k\right\}\\ g\left(  n-k\right)  =g\left(
n\right) } }a_{0}\left(  n\right)  b_{0}\left(  k-n\right)  \left\vert
n\right\vert ^{-\left(  d+1\right)  /2}\left\vert k-n\right\vert ^{-\left(
d+1\right)  /2}\right)  \exp\left(  2\pi ikx\right)  .
\]
Since $A\left\vert x\right\vert \leq g\left(  x\right)  \leq B\left\vert
x\right\vert $, if $g\left(  n-k\right)  =g\left(  n\right)  $ then
$\left\vert k\right\vert \leq C\left\vert n\right\vert $ and
\[
a_{0}\left(  n\right)  b_{0}\left(  k-n\right)  \left\vert n\right\vert
^{-\left(  d+1\right)  /2}\left\vert k-n\right\vert ^{-\left(  d+1\right)
/2}\leq C\left\vert n\right\vert ^{-d-1}\leq C\left\vert k\right\vert
^{-d-1}.
\]
Hence, under the assumption that that for every $m\in\mathbb{Z}^{d}$ the
equation $g\left(  m\right)  =g\left(  n\right)  $ has at most $C$ solutions
in $\mathbb{Z}^{d}$, the Fourier coefficients of $\mathcal{G}\left(
x\right)  $ are bounded by
\[
2\sum_{\substack{n\in\mathbb{Z}^{d}\setminus\left\{  0,k\right\}  \\ g\left(  n-k\right)
=g\left(  n\right) } }a_{0}\left(  n\right)  b_{0}\left(  k-n\right)
\left\vert n\right\vert ^{-\left(  d+1\right)  /2}\left\vert k-n\right\vert
^{-\left(  d+1\right)  /2}\leq C\left\vert k\right\vert ^{-d-1}.
\]
It follows that the Fourier expansion that defines $\mathcal{G}\left(
x\right)  $ is absolutely and uniformly convergent, and this implies that $\mathcal{G}%
\left(  x\right)  $ is bounded and continuous. In particular, under the
additional assumption that $g\left(  m\right)  \neq g\left(  n\right)  $ for
every $m,n\in\mathbb{Z}^{d}$ with $m\neq n$, all Fourier coefficients with
$k\neq0$ vanish, and this function reduces to the constant
\[
2\sum_{n\in\mathbb{Z}^{d}\setminus\left\{  0\right\}  }a_{0}\left(  n\right)
b_{0}\left(  -n\right)  \left\vert n\right\vert ^{-d-1}.
\]
\end{proof}
In order to prove Theorem \ref{2} we need an easy algebraic lemma.

\begin{lemma}
\label{Algebraic} If $\left(  A,B,C,D,...\right)  $ is a vector with integers
coordinates, then the integer vectors $\left(  x,y,z,w,...\right)  $ which are
solutions to the equation $Ax+By+Cz+Dw+...=0$ are a lattice. If $A$
and $B$ are coprimes, so that there exist integers $u$ and $v$
such that $Au+Bv=1$, then a basis of the lattice $\left\{
Ax+By+Cz+Dw+...=0\right\}  $ is
\[
\left\{  \left(  B,-A,0,0,...\right)  ,\left(  uC,vC,-1,0,...\right)  ,\left(
uD,vD,0,-1,...\right)  ,...\right\}  .
\]
The area of a fundamental domain of this lattice is the length of the vector
$\left(  A,B,C,D,...\right)  $,
\[
\sqrt{A^{2}+B^{2}+C^{2}+D^{2}+...}.
\]
\end{lemma}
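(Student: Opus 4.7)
The first assertion is immediate: the solution set $\Lambda = \{x\in\mathbb Z^n : v\cdot x=0\}$ (where $v=(A,B,C,D,\ldots)$) is the kernel of a group homomorphism from $\mathbb Z^n$ to $\mathbb Z$, hence a subgroup of $\mathbb Z^n$ and thus a discrete subgroup of $\mathbb R^n$, i.e. a lattice.

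Next, I would verify that the proposed vectors $b_1=(B,-A,0,\ldots,0)$ and, for each later coordinate index $k\ge 3$, $b_{k-1}=(uA_k,vA_k,0,\ldots,-1,\ldots,0)$ with $-1$ in position $k$, lie in $\Lambda$: substituting into $v\cdot x$ gives $AB-BA=0$ for $b_1$, and $(Au+Bv)A_k-A_k=0$ for the others, using $Au+Bv=1$. To see that they generate $\Lambda$ over $\mathbb Z$, take any $x=(x_1,\ldots,x_n)\in\Lambda$ and subtract $\sum_{k\ge 3} x_k\, b_{k-1}$ to kill the coordinates $x_3,\ldots,x_n$. The resulting vector $y$ has $y_k=0$ for $k\ge 3$ and still satisfies $Ay_1+By_2=0$. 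Since $\gcd(A,B)=1$, one infers $y_1=kB$, $y_2=-kA$ for some $k\in\mathbb Z$, so $y=k\,b_1$. This exhibits the unique integer decomposition, so $\{b_1,\ldots,b_{n-1}\}$ is a $\mathbb Z$-basis of $\Lambda$.

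For the covolume, the plan is to complete the basis of $\Lambda$ to a basis of $\mathbb Z^n$ by adjoining $b_n:=(u,v,0,\ldots,0)$. Indeed, because $Au+Bv=1$, the same reduction applied above shows that every $x\in\mathbb Z^n$ can be written as an integer combination of $b_1,\ldots,b_{n-1},b_n$: first subtract $\sum_{k\ge 3}x_k\,b_{k-1}$ as before, then handle the remaining two coordinates using the identity $(x_1,x_2,0,\ldots,0)=(Bx_1'-Ax_2')\cdot\text{(something)}$, or more directly observe that the $n\times n$ matrix with these rows has determinant $\pm 1$ by expanding along the trailing $(n-2)\times(n-2)$ block of $-1$'s. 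Now compute the $n$-dimensional volume of the parallelepiped spanned by $b_1,\ldots,b_n$ in two ways: it equals $|\det|=1$, and also equals the $(n-1)$-dimensional area of the face spanned by $b_1,\ldots,b_{n-1}$ (which lies in the hyperplane $v^\perp$) times the height of $b_n$ above that hyperplane. The height is $|v\cdot b_n|/|v|=|Au+Bv|/|v|=1/|v|$, so the covolume of $\Lambda$ is $|v|=\sqrt{A^2+B^2+C^2+D^2+\cdots}$.

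The only real care needed is in Step~3, to ensure that the extended basis is genuinely a $\mathbb Z$-basis of $\mathbb Z^n$ (not merely an $\mathbb R$-basis); everything else is a routine check. The geometric identity ``volume $=$ base $\times$ height'' then yields the stated area formula immediately.
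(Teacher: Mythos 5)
Your proposal is correct in substance, with one trivial sign slip and a genuinely different route for the covolume. First the slip: since each $b_{k-1}$ has $-1$ (not $+1$) in position $k$, you should \emph{add} $\sum_{k\ge3}x_k\,b_{k-1}$ to $x$ (not subtract) to annihilate the trailing coordinates; this does not affect the argument.

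The paper obtains the basis constructively by solving the Diophantine system (particular solution of $Ax+By=-Cz-Dw-\cdots$ plus the general solution of $Ax+By=0$), whereas you verify the given vectors lie in $\Lambda$ and then show they generate by coordinate elimination --- equivalent, and yours is a bit cleaner as a verification. The real divergence is in the area computation. The paper uses the generalized cross product: it forms the formal determinant with the unit vectors $\mathbf e_j$ in the first row and the basis vectors in the remaining rows, obtains $\pm(A,B,C,D,\ldots)$, and reads off the $(n-1)$-dimensional volume as its length. You instead adjoin $b_n=(u,v,0,\ldots,0)$, check that $(b_1,\ldots,b_n)$ is a $\mathbb Z$-basis of $\mathbb Z^n$ with $|\det|=1$ (block-triangular after a row permutation, the $2\times2$ block $\begin{pmatrix}u&v\\ B&-A\end{pmatrix}$ having determinant $-(Au+Bv)=-1$), and then use the volume decomposition $|\det| = (\text{base area})\cdot(\text{height})$ with height $|v\cdot b_n|/|v|=1/|v|$. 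The two arguments are of course cousins --- the cross-product identity is typically proved by exactly such a base--height decomposition --- but your version avoids invoking the generalized cross product as a black box at the mild cost of having to check that the extension is unimodular. Both are valid.
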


\begin{proof}
The solutions to the equation $Ax+By+Cz+Dw+...=0$ are a sum of a particular
solution to the non homogeneous equation $Ax+By=-Cz-Dw-...$, plus all
solutions to the homogeneous equation $Ax+By=0$. The solutions to the
homogeneous equation $Ax+By=0$ are $x=Br$ and $y=-Ar$, and a particular
solution to the equation $Ax+By=-Cz-Dw-...$ is $x=-u\left(  Cz+Dw+...\right)
$ and $y=-v\left(  Cz+Dw+...\right)  $. Hence, all integral solutions to
$Ax+By+Cz+Dw+...=0$ are
\[
\left(  x,y,z,w,...\right)  =r\left(  B,-A,0,0,...\right)  +s\left(
uC,vC,-1,0,...\right)  +t\left(  uD,vD,0,-1,...\right)  +....
\]
If $\{\mathbf{e}_{j}\}$ is the standard basis of orthogonal unit vectors, the area of a fundamental domain of the lattice $\left\{
Ax+By+Cz+Dw+...=0\right\}  $ is the length of the vector
\begin{align*}
&  \mathrm{det} \left[
\begin{array}
[c]{ccccc}%
\mathbf{e}_{1} & \mathbf{e}_{2} & \mathbf{e}_{3} & \mathbf{e}_{4} & ...\\
B & -A & 0 & 0 & ...\\
uC & vC & -1 & 0 & ...\\
uD & vD & 0 & -1 & ...\\
... & ... & ... & ... & ...
\end{array}
\right]   \\
&  =\pm A\mathbf{e}_{1}\pm B\mathbf{e}_{2}\pm C\left(  Au+Bv\right)
\mathbf{e}_{3}\pm D\left(  Au+Bv\right)  \mathbf{e}_{4}\pm...\\
&  \mathbf{=}\left(  \mathbf{\pm}A,\mathbf{\pm}B,\mathbf{\pm}C,\mathbf{\pm
}D,...\right)  .
\end{align*}

\end{proof}

\begin{proof}[Proof of Theorem \ref{2}] Let us first prove part (2), that is assume that
$ \widehat{\mu}\left(  \zeta\right)  $ vanishes at infinity. If $\Omega$ is a convex set as in Theorem \ref{1},
set
\[
\mathcal{K}\left(  H,R,x\right)  =%
{\displaystyle\int_{\mathbb{R}}}
\left\vert r^{-\left(  d-1\right)  /2}\mathcal{D}\left(  r\Omega-x\right)
\right\vert ^{2}d\mu_{H,R}\left(  r\right)  .
\]
If the statement of part (2) of the theorem fails, then there exist
$2d/\left(  d-3\right)  <p<+\infty$ and sequences $\left\{  R_{n}\right\}
\rightarrow+\infty$ and $\left\{  H_{n}\right\}  \rightarrow+\infty$ such
that
\[
\limsup_{n\rightarrow+\infty}\left\{  \int_{\mathbb{T}^{d}}\left\vert
\mathcal{K}\left(  H_{n},R_{n},x\right)  \right\vert ^{p/2}dx\right\}
^{2/p}<+\infty.
\]
Then a suitable subsequence converges weakly in $L^{p/2}\left(  \mathbb{T}%
^{d}\right)  $, and this weak convergence implies the convergence of Fourier
coefficients. By the assumption that $\lim_{\left\vert \zeta\right\vert
\rightarrow+\infty}\left\{  \left\vert \widehat{\mu}\left(  \zeta\right)
\right\vert \right\}  =0$ and by Lemma \ref{Terms}, the subsequence 
converges weakly to the function $\mathcal{G}\left( x\right)=\mathcal{G}\left( (d+1)/2, x\right)$ 
defined in Lemma \ref{Main Term},
\[
\mathcal{G}\left(  x\right)  =\sum_{k\in\mathbb{Z}^{d}}\left(  2\sum
_{\substack{n\in\mathbb{Z}^{d}\setminus\left\{  0,k\right\}\\ g\left(  n-k\right)  =g\left(
n\right) } }a_{0}\left(  n\right)  b_{0}\left(  k-n\right)  \left\vert
n\right\vert ^{-\left(  d+1\right)  /2}\left\vert k-n\right\vert ^{-\left(
d+1\right)  /2}\right)  \exp\left(  2\pi ikx\right)  .
\]
Recall that, by Theorem
\ref{1}, this function $\mathcal{G}\left(  x\right)  $ is in $L^{p/2}\left(
\mathbb{T}^{d}\right)  $ for every $p<\left(  2d-4\right)  /\left(
d-3\right)  $. In order to prove the theorem, it suffices to show that when
$\Omega$ is the ball $\Sigma=\left\{  \left\vert x\right\vert \leq1\right\}  $ this function is not
in $L^{p/2}\left(  \mathbb{T}^{d}\right)  $ if $p>2d/\left(  d-3\right)  $. In
order to give an estimate of the norm from below, one can test this function
against a Bessel potential of order $\alpha>0$,
\[
\mathcal{B}\left(  x\right)  =\sum_{k\in\mathbb{Z}^{d}}\left(  1+4\pi
^{2}\left\vert k\right\vert ^{2}\right)  ^{-\alpha/2}\exp\left(  2\pi
ikx\right)  .
\]
This Bessel potential is a positive integrable function, which blows up as
$x\rightarrow0$ with an asymptotic expansion
\[
\mathcal{B}\left(  x\right)  \approx\left\{
\begin{array}
[c]{ll}%
C\left\vert x\right\vert ^{\alpha-d} & \text{if }0<\alpha<d\text{,}\\
C\log\left(  1/\left\vert x\right\vert \right)  & \text{if }\alpha=d\text{,}\\
C & \text{if }\alpha>d\text{.}%
\end{array}
\right.
\]
This follows from the Poisson summation formula, see \cite[Chapter VII.2]{SW}, and the asymptotic estimate
of the Bessel potentials in $\mathbb{R}^{d}$, see \cite[Chapter V 3.1]{Stein1}. 
It follows that if $1\leq r\leq+\infty$ and $\alpha>d\left(  1-1/r\right)  $,
then
\[
\left\{
{\displaystyle\int_{\mathbb{T}^{d}}}
\left\vert \mathcal{B}\left(  x\right)  \right\vert ^{r}dx\right\}
^{1/r}<+\infty.
\]
By the way, when $2\leq r\leq+\infty$ and $1/r+1/s=1$ and $\alpha>d\left(
1-1/r\right)  =d/s$, this also follows via the Hausdorff Young inequality:
\begin{align*}
& \left\{
{\displaystyle\int_{\mathbb{T}^{d}}}
\left\vert \sum_{k\in\mathbb{Z}^{d}}\left(  1+4\pi^{2}\left\vert k\right\vert
^{2}\right)  ^{-\alpha/2}\exp\left(  2\pi ikx\right)  \right\vert
^{r}dx\right\}  ^{1/r}\\
  \leq&\left\{  \sum_{k\in\mathbb{Z}^{d}}\left\vert \left(  1+4\pi
^{2}\left\vert k\right\vert ^{2}\right)  ^{-\alpha/2}\right\vert ^{s}\right\}
^{1/s}<+\infty.
\end{align*}
If $1/r+1/s=1$, then
\begin{align*}
&  2\sum_{k\in\mathbb{Z}^{d}}\left(  1+4\pi^{2}\left\vert k\right\vert
^{2}\right)  ^{-\alpha/2}\sum_{\substack{n\in\mathbb{Z}^{d}\setminus\left\{  0,k\right\}
\\ g\left(  n-k\right)  =g\left(  n\right)}  }a_{0}\left(  n\right)
b_{0}\left(  k-n\right)  \left\vert n\right\vert ^{-\left(  d+1\right)
/2}\left\vert k-n\right\vert ^{-\left(  d+1\right)  /2}\\
&  =%
{\displaystyle\int_{\mathbb{T}^{d}}}
\mathcal{B}\left(  x\right)  \mathcal{G}\left(  x\right)  dx\leq\left\{
{\displaystyle\int_{\mathbb{T}^{d}}}
\left\vert \mathcal{B}\left(  x\right)  \right\vert ^{r}dx\right\}
^{1/r}\left\{
{\displaystyle\int_{\mathbb{T}^{d}}}
\left\vert \mathcal{G}\left(  x\right)  \right\vert ^{s}dx\right\}  ^{1/s}.
\end{align*}
Recall that $g\left(  n\right)  \approx\left\vert n\right\vert $ and that $a_0(n)$ and $b_0(n)$ are positive and bounded from below. 
If $g(n-k)=g(n)$, then $|n|^{-(d+1)/2}|n-k|^{-(d+1)/2}\approx |n|^{-d-1}$. Hence
for every
$\alpha>d\left(  1-1/r\right)  =d/s$ one obtains
\[
\left\{
{\displaystyle\int_{\mathbb{T}^{d}}}
\left\vert \mathcal{G}\left(  x\right)  \right\vert ^{s}dx\right\}  ^{1/s}\geq
C\sum_{k\in\mathbb{Z}^{d}\setminus\left\{  0\right\}  }\left\vert k\right\vert
^{-\alpha}\sum_{\substack{n\in\mathbb{Z}^{d}\setminus\left\{  0\right\}  \\ g\left(  n-k\right)
=g\left(  n\right)}  }\left\vert n\right\vert ^{-d-1}.
\]
Up to this point we have not assumed that the domain is a ball. Now assume that  $\Omega$ is the ball $\Sigma=\left\{  \left\vert x\right\vert \leq1\right\}  $. Then $a_{0}(n)$ and $b_{0}(n)$ are constants and $g\left(  n\right)  =\left\vert n\right\vert $, and the above
inequality takes the more explicit form
\[
\left\{
{\displaystyle\int_{\mathbb{T}^{d}}}
\left\vert \mathcal{G}\left(  x\right)  \right\vert ^{s}dx\right\}  ^{1/s}\geq
C\sum_{k\in\mathbb{Z}^{d}\setminus\left\{  0\right\}  }\left\vert k\right\vert
^{-\alpha}\sum_{\left\vert n-k\right\vert =\left\vert n\right\vert }\left\vert
n\right\vert ^{-d-1}.
\]
In order to bound this expression from below, one can restrict the sum to the
$k$ even,
\[
\sum_{k\in\mathbb{Z}^{d}\setminus\left\{  0\right\}  }\left\vert k\right\vert
^{-\alpha}\sum_{\left\vert n-k\right\vert =\left\vert n\right\vert }\left\vert
n\right\vert ^{-d-1}\geq\sum_{k\in\mathbb{Z}^{d}\setminus\left\{  0\right\}
}\left\vert 2k\right\vert ^{-\alpha}\sum_{\left\vert n-2k\right\vert
=\left\vert n\right\vert }\left\vert n\right\vert ^{-d-1}.
\]
The equation $\left\vert m-2k\right\vert =\left\vert m\right\vert $ is the
same as $k\cdot m=k\cdot k$, and with the change of variables $m=k+n$ one
obtains $k\cdot n=0$, so that for every $\alpha>d/s$,
\[
\left\{
{\displaystyle\int_{\mathbb{T}^{d}}}
\left\vert \mathcal{G}\left(  x\right)  \right\vert ^{s}dx\right\}  ^{1/s}\geq
C\sum_{k\in\mathbb{Z}^{d}\setminus\left\{  0\right\}  }\left\vert k\right\vert
^{-\alpha}\sum_{k\cdot n=0}\left(  \left\vert k\right\vert ^{2}+\left\vert
n\right\vert ^{2}\right)  ^{-\left(  d+1\right)  /2}.
\]
By the above lemma, when two entries of the vector $k$ are coprimes, the area
of a fundamental domain of the $(d-1)$-dimensional lattice $\left\{  k\cdot
n=0\right\}  $ is $\left\vert k\right\vert $, the density of the lattice
is $\left\vert k\right\vert ^{-1}$ and, as a consequence of the classical theorem of Blichfeldt in the geometry of numbers 
(see e.g. \cite[Theorem 9.5]{OLD} for a proof in two dimensions which immediately extends to any dimension), 
for some constant $C$ independent of $k$, one has 
$$\vert\{k\cdot n=0, |n|\leq |k| \} \vert \geq C|k|^{d-2}.$$ Hence, when two entries of the vector $k$ are coprimes,
\[%
{\displaystyle\sum_{k\cdot n=0}}
\left(  \left\vert k\right\vert ^{2}+\left\vert n\right\vert ^{2}\right)
^{-\left(  d+1\right)  /2}\geq\left(  2\left\vert k\right\vert ^{2}\right)
^{-\left(  d+1\right)  /2}\left\vert \left\{  k\cdot n=0,\ \left\vert
n\right\vert \leq\left\vert k\right\vert \right\}  \right\vert \geq
C\left\vert k\right\vert ^{-3}.
\]

By a theorem of E.Ces\`{a}ro, see \cite[Theorem 332]{HW}, the probability that two random non
negative integers are coprime is $6/\pi^{2}$, then the probability that two entries
of the vector $k$ are coprime is positive. 
This implies that if we call $A$ the set of $k\in\mathbb Z^d$ with two coprime entries
and if $\varepsilon$ is sufficiently small and $\eta$ is sufficiently large,
then every shell $\{\eta^j\leq|k|<\eta^{j+1}\}$ contains at least $\varepsilon\eta^{jd}$
integer points in $A$.
Hence, if $\alpha\leq d-3$,
\[
\sum_{k\in\mathbb{Z}^{d}\setminus\left\{  0\right\}  }\left\vert k\right\vert
^{-\alpha}\sum_{k\cdot n=0}\left(  \left\vert k\right\vert ^{2}+\left\vert
n\right\vert ^{2}\right)  ^{-\left(  d+1\right)  /2}\geq C\sum_{k\in
A  }\left\vert k\right\vert ^{-\alpha
-3}=+\infty.
\]
In particular, recalling that $s=p/2$ and $\alpha>d/s=2d/p$, if $p>2d/\left(
d-3\right)  $ then
\[
\left\{  {%
{\displaystyle\int_{\mathbb{T}^{d}}}
}\left\vert \mathcal{G}\left(  x\right)  \right\vert ^{p/2}dx\right\}
^{1/p}=+\infty.
\]
This proves (2). Finally, (1) follows from (2) by replacing the measure
$d\mu\left(  r\right)  $ with a convolution $\varphi\ast\mu\left(  r\right)
dr$, with $\varphi\left(  r\right)  $ a non negative smooth function on
$\mathbb{R}$ with integral one. This convolution is a probability measure with
Fourier transform that vanishes at infinity. Observe that
\begin{align*}
&  \left\{  \int_{\mathbb{T}^{d}}\left(
{\displaystyle\int_{\mathbb{R}}}
\left\vert r^{-\left(  d-1\right)  /2}\mathcal{D}\left(  r\Omega-x\right)
\right\vert ^{2}d\left(  \varphi\ast\mu\right)  _{H,R}\left(  r\right)
\right)  ^{p/2}dx\right\}  ^{2/p}\\
&  =\left\{  \int_{\mathbb{T}^{d}}\left(
{\displaystyle\int_{\mathbb{R}}}
{\displaystyle\int_{\mathbb{R}}}
\left\vert \left(  R+H\left(  r+t\right)  \right)  ^{-\left(  d-1\right)
/2}\mathcal{D}\left(  \left(  R+H\left(  r+t\right)  \right)  \Omega-x\right)
\right\vert ^{2}d\mu\left(  r\right)  \varphi\left(  t\right)  dt\right)
^{p/2}dx\right\}  ^{2/p}\\
&  \leq%
{\displaystyle\int_{\mathbb{R}}}
\left\{  \int_{\mathbb{T}^{d}}\left(
{\displaystyle\int_{\mathbb{R}}}
\left\vert \left(  R+H\left(  r+t\right)  \right)  ^{-\left(  d-1\right)
/2}\mathcal{D}\left(  \left(  R+H\left(  r+t\right)  \right)  \Omega-x\right)
\right\vert ^{2}d\mu\left(  r\right)  \right)  ^{p/2}dx\right\}  ^{2/p}%
\varphi\left(  t\right)  dt.
\end{align*}
Hence, if
\[
\left\{  \int_{\mathbb{T}^{d}}\left(
{\displaystyle\int_{\mathbb{R}}}
\left\vert r^{-\left(  d-1\right)  /2}\mathcal{D}\left(  r\Omega-x\right)
\right\vert ^{2}d\mu_{H,R}\left(  r\right)  \right)  ^{p/2}dx\right\}
^{2/p}\leq C<+\infty,
\]
then also
\[
\left\{  \int_{\mathbb{T}^{d}}\left(
{\displaystyle\int_{\mathbb{R}}}
\left\vert r^{-\left(  d-1\right)  /2}\mathcal{D}\left(  r\Omega-x\right)
\right\vert ^{2}d\left(  \varphi\ast\mu\right)  _{H,R}\left(  r\right)
\right)  ^{p/2}dx\right\}  ^{2/p}\leq C<+\infty,
\]
and the argument used to prove (2) applies.
\end{proof}

\section{Concluding remarks}

\begin{remark}
\label{ellissoidi}
The following examples show that Corollary \ref{1.2} 
is non void.
The ellipsoid $\Omega=\left\{  \left\vert M\left(  x-p\right)  \right\vert
\leq1\right\}  $, with $M$ a non singular $d\times d$ matrix and $p$ a fixed
point in $\mathbb{R}^{d}$, has support function $g\left(  x\right)
=\left\vert \left(  M^{t}\right)  ^{-1}x\right\vert +x\cdot p$,
\begin{gather*}
g\left(  x\right)  =\sup_{y\in\Omega}\left\{  x\cdot y\right\}  =\sup
_{\left\{  \left\vert M\left(  y-p\right)  \right\vert \leq1\right\}
}\left\{  M^{t}\left(  M^{t}\right)  ^{-1}x\cdot y\right\} \\
=\sup_{\left\{  \left\vert M\left(  y-p\right)  \right\vert \leq1\right\}
}\left\{  \left(  M^{t}\right)  ^{-1}x\cdot M\left(  y-p\right)  \right\}
+\left(  M^{t}\right)  ^{-1}x\cdot Mp=\left\vert \left(  M^{t}\right)
^{-1}x\right\vert +x\cdot p.
\end{gather*}
Then the equality $g\left(  m\right)  =g\left(  n\right)  $ gives a non trivial
algebraic relation between the coordinates of $p=\left(  p_{1},p_{2}%
,...,p_{d}\right)  $,
\[
\left(  m-n\right)  \cdot p=\left\vert \left(  M^{t}\right)  ^{-1}n\right\vert
-\left\vert \left(  M^{t}\right)  ^{-1}m\right\vert .
\]

If $\left\{  1,p_{1},p_{2},...,p_{d}\right\}  $ are linearly independent over
the algebraic closure of the field generated by the entries of the matrix $M$,
then this relation holds only if $m=n$. Hence, under these assumptions, the
support function is injective when restricted to the integers. 

In the case
$p=0$, then the equality $g\left(  m\right)  =g\left(  n\right)  $ when
squared gives
\begin{gather*}
Am_{1}^{2}+Bm_{2}^{2}+...+Cm_{1}m_{2}+...=An_{1}^{2}+Bn_{2}^{2}+...+Cn_{1}%
n_{2}+...,\\
A\left(  m_{1}^{2}-n_{1}^{2}\right)  +B\left(  m_{2}^{2}-n_{2}^{2}\right)
+...+C\left(  m_{1}m_{2}-n_{1}n_{2}\right)  +...=0.
\end{gather*}
Here $m=\left(  m_{1},m_{2},...\right)  $, $n=\left(  n_{1},n_{2},...\right)
$, and $A$, $B$, $C$,... are homogeneous second degree polynomials in the
entries of the matrix $M$. This equation has at least the solutions $n=\pm
m$. On the other hand, if the entries of the matrix $M$ are algebraically
independent, then $n=\pm m$ are the only solutions. A cardinality argument
shows that for a fixed matrix $M$, then almost every point $p$ has the property that there
exists no algebraic relation between its coordinates. Similarly, almost every
matrix $M$ has the property that its entries are algebraically independent.
\end{remark}

\begin{remark}
In Corollary \ref{1.2} we defined a convex set \textit{\textquotedblleft
generic\textquotedblright} if its support function is injective when
restricted to the integers. By the above remark these generic convex sets exist. Moreover, they are the majority, they
are of second category in space of compact convex sets endowed with the
Hausdorff metric. If $A+\Omega$ is the Minkowski sum of $A$ and $\Omega$, then
$g_{rA+\Omega}\left(  x\right)  =rg_{A}\left(  x\right)  +g_{\Omega}\left(
x\right)  $. For a fixed $x$ in $\mathbb{R}^{d}$, the function $\Omega
\rightarrow g_{\Omega}\left(  x\right)  $ is continuous in the Hausdorff
metric. For fixed $m,n\in\mathbb{Z}^{d}$ with $m\neq n$, the collection of
convex sets $\Omega$ with $g_{\Omega}\left(  m\right)  \neq g_{\Omega}\left(
n\right)  $ is open in the Hausdorff metric. On the other hand, if $g_{\Omega
}\left(  m\right)  =g_{\Omega}\left(  n\right)  $, and if $A$ is a convex set
with $g_{A}\left(  m\right)  \neq g_{A}\left(  n\right)  $, as in the previous
remark, then $rA+\Omega\rightarrow\Omega$ as $r\rightarrow0+$, and
$g_{rA+\Omega}\left(  m\right)  \neq g_{rA+\Omega}\left(  n\right)  $. This
implies that the set of $\Omega$ with $g_{\Omega}\left(  m\right)  \neq
g_{\Omega}\left(  n\right)  $ is open and dense. Hence the set of $\Omega$
with $g_{\Omega}\left(  m\right)  \neq g_{\Omega}\left(  n\right)  $ for every
$m,n\in\mathbb{Z}^{d}$ with $m\neq n$ is the intersection of a countable
family of open dense sets.
\end{remark}

\begin{remark}
For the ball centered at the origin $\Sigma=\left\{  \left\vert x\right\vert
\leq1\right\}  $ the function $\mathcal{G}\left(  x\right)  $ defined in Lemma
\ref{Main Term} and studied in Theorem \ref{2} is not constant. On the other hand,
by Remark \ref{ellissoidi}, for almost
every $p$ the function $\mathcal{G}\left(  x\right)  $ associated to the
shifted ball $\Omega=\left\{  \left\vert x-p\right\vert \leq1\right\}
=\Sigma+p$ is constant. This may seem contradictory, but observe that in the
case of $\Sigma$ the function $\mathcal{G}\left(  x\right)  $ is an $r$
average of the discrepancy of $r\Sigma-x$, while if $\Omega=\Sigma+p$ the
function $\mathcal{G}\left(  x\right)  $ is an $r$ average of the discrepancy
of $r\Omega-x=r\Sigma+\left(  rp-x\right)  $. These averages are different. In
particular, in the averages of $r\Sigma+\left(  rp-x\right)  $ are a mix of an
average over the dilations $r\Sigma$ together with an average over the
translations $rp-x$. Observe that for irrational choices of $p$, these
translations $rp-x$ are dense in the set of all translations. Hence it is not
completely surprising that in this case $\mathcal{G}\left(  x\right)  $ is constant.
\end{remark}

\bibliographystyle{amsplain}

\end{document}